\theoremstyle{plain}
\newtheorem{theorem}{Theorem}[section]
\newtheorem{lemma}[theorem]{Lemma}
\newtheorem{corollary}[theorem]{Corollary}
\theoremstyle{definition}
\theoremstyle{remark}
\newtheorem{remark}[theorem]{Remark}
\renewcommand{\Bbb}{\mathbb} 
\begin{document}
	
	\title{Sombor Spectrum of Super Graphs defined on groups}
        \author[ Ekta Pachar, Sandeep Dalal, Jitender Kumar]{Ekta Pachar$^1$, Sandeep Dalal$^2$, Jitender Kumar$^{1*}$}

 \address{1 - Department of Mathematics, Birla Institute of Technology and Science Pilani, Pilani-333031, India\\
 2 - Department of Mathematics, Punjab Engineering College (Deemed to be University), Chandigarh- 160012, India}
 \email{ektasangwan0@gmail.com, deepdalal10@gmail.com, jitenderarora09@gmail.com }


\subjclass[2020]{05C25, 05C50}
\keywords{Sombor matrix, Sombor spectrum, B super A graphs, Conjugacy classes.}
\maketitle


\section*{abstract}

Given a simple graph $A$ on a group $G$ and an equivalence relation $B$ on $G$, the $B$ super $A$ graph is defined as a simple graph, whose vertex set is $G$ and two vertices $g$, $h$ are adjacent if either they are in the same equivalence class or there exist $g^{\prime} \in[g]$ and $h^{\prime} \in[h]$ such that $g^{\prime}$ and $h^{\prime}$ are adjacent in $A$. In the literature, the $B$ super $A$ graphs have been investigated by considering $A$ to be either power graph, enhanced power graph, or commuting graph and $B$ to be an  equality, order or conjugacy relation. In this paper, we investigate the Sombor spectrums of these $B$ super $A$ graphs for certain non-abelian groups, viz. the dihedral group, generalized quaternion group and the semidihedral group, respectively.


\section{introduction}
The study of algebraic structures using graph properties has been a significant area of research over the past
three decades. There are several interesting papers connecting the research in graph theory with algebraic structure, viz. non-cyclic graphs \cite{abdollahi2009noncyclic}, Cayley graphs \cite{a.kelarev2002undirected}, commuting graphs \cite{a.segev2001commuting}, power graphs \cite{chakrabarty2009undirected}, and enhanced power graphs \cite{bera2018enhanced}. 
Algebraic graphs over particular finite groups are well-studied. For instance the power graphs, enhanced power graphs, commuting graphs etc.,  over dihedral groups $D_{2n}$, semidihedral groups $SD_{8n}$, generalized quaternion groups $Q_{4n}$, and finite cyclic groups $\mathbb Z_n$ (see \cite{ali2016commuting,a.ashrafi2017automorphism,a.chattopadhyayspectralradius2017,a.chattopadhyay-laplacian,a.2017Hamzaeh_automorphism, dalal2021semidihedral,rajkumar2021laplacian}). In continuation of this study, the notion of super graphs over finite groups was  introduced by Arun Kumar \emph{et al.} \cite{arunkumar2024graphs}. Let $B$ be an equivalence relation on a finite group $G$. For $g \in G$, $[g]$ be an equivalence class of $g$ in $G$. The $B$ super $A$ graph is defined as a simple graph, whose vertex set is $G$ and two vertices $g$, $h$ are adjacent if either they are in the same equivalence class or there exist $g^{\prime} \in[g]$ and $h^{\prime} \in[h]$ such that $g^{\prime}$ and $h^{\prime}$ are adjacent in $A$. Moreover, the subgaph induced by the vertices belongs the equivalence class in $[x]_B$  in the $B$ super $A$ graph is complete. In this article, we study the following three types of graphs and three types of equivalence relations.\\
Three graphs over a finite group $G$:

\begin{enumerate}
\item[\rm (i)] The \emph{power graph} $\mathcal{P}(G)$ of a group $G$ is an undirected simple graph whose vertex set is $G$ and two vertices $x$ and $y$ are adjacent if either $x \in \langle y \rangle$ or $y \in \langle x \rangle$.

\item[\rm (ii)] The \emph{enhanced power graph} $\mathcal{P}_E(G)$ of a group $G$ is an undirected simple graph whose vertex set is $G$ and two vertices $x$ and $y$ are adjacent if both $x$ and $y$ belongs to a  same cyclic subgroup of a group $G$.

\item[\rm (iii)] The \emph{commuting  graph} $\Delta(G)$ of a group $G$ is an undirected simple graph whose vertex set is $G$ and two vertices $x$ and $y$ are adjacent whenever $xy = yx$.
\end{enumerate}

Three equivalence relations on a group $G$ are:

\begin{enumerate}
	\item[\rm (a)]  \emph{equality relation} $(x, y) \in B_{_E}$ if and only if $x = y$; 
	
	\item[\rm (b)] \emph{order relation} $(x, y) \in B_o$ if and only if $ o (x) = o(y)$, where $o(a)$ denotes the order of $a \in G$; 
	
	\item[\rm (c)] \emph{conjugacy relation} $(x, y) \in B_c$ if and only if $x = aya^{-1}$ for some $a \in G$. 
\end{enumerate}

Note that the equality super $\Gamma$ graph is equal to the same graph $\Gamma$. In this article, we denote the order super $\Gamma(G)$ graph and the conjugacy super $\Gamma (G)$ graph by $\Gamma^o(G)$ and $\Gamma^c(G)$, respectively, where $\Gamma(G) \in  \{\mathcal{P}(G),  \mathcal{P}_E(G),  \Delta(G)\}$. For some pairs of graphs, Kumar \emph{et al.} \cite{arunkumar2022super} characterized finite groups $G$ such that two graphs in a particular pair are equal. Moreover, Kumar \emph{et al.} \cite{arunkumar2024main} obtained the spectrum of equality super commuting and conjugacy super commuting graphs for the dihedral groups and the generalized quaternion groups and show that these are not integral. Further, Dalal \emph{et al.}  classified the finite groups $G$ such that two graph in the pair $\mathfrak{P}$ are equal, where $\mathfrak{P} \in \{ \mathfrak{P}_1, \mathfrak{P} _2, \mathfrak{P} _3, \mathfrak{P} _4, \mathfrak{P} _5, \mathfrak{P} _6 \}$ with $\mathfrak{P}_1 = \{ \mathcal{P}(G),   \mathcal{P}^o(G)\} $,  $\mathfrak{P}_2 = \{ \mathcal{P}^o(G),   \mathcal{P}^o_E(G)\} $, $\mathfrak{P}_3 = \{ \mathcal{P}^c(G),   \mathcal{P}^c_E(G)\} $,  $\mathfrak{P}_4 = \{ \mathcal{P}^o(G),   \Delta^o(G)\} $,  $\mathfrak{P}_5 = \{ \Delta(G),   \Delta^o(G)\} $,  $\mathfrak{P}_6 = \{ \mathcal{P}_E(G),   \mathcal{P}_E^o(G)\} $. Finally, they proved that the diameter of the reduced order super commuting graph $\Delta^o(G)^*$, where $G \in \{S_n, A_n\}$,  is either two or three and they posed a conjecture that $\Delta^o(G)^* = 3$, where $G \in \{S_n, A_n\}$. Bra$\stackrel{\vee}{c}$i$\stackrel{\vee}{c}$ \emph{et al.} \cite{bravcivc2025diameter} proved that $\Delta^o(S_n)^* = 3$ and $\Delta^o(A_n)^* = 3$.  The study of graph invariants such as Laplacian spectrum,  Sombor spectrum, metric dimension, and detour distance, is both intriguing and significant due to their valuable applications.  Various authors have studied the Laplacian spectrum of certain graphs on algebraic structures (see \cite{chattopadhyay2015laplacian,dalal2021semidihedral,rajkumar2021laplacian}). Dalal \emph{et al.} obtained the adjacency spectrum and Laplacian spectrum of conjugacy super commuting graphs and order super commuting graphs of dihedral group $D_{2n} (n \geq 3)$, generalized quaternion group $Q_{4m} (m \geq 2)$ and the non-abelian group $\mathbb Z_p \rtimes \mathbb Z_q$ of order $pq$,  where $p$ and $q$ are distinct primes with $q \mid (p-1)$. The notion of Sombor spectrum of a graph was introduced by Gutman \cite{gutman2021geometric}. Rather \emph{et al.} \cite{rather2024comaximal} obtained the sharp bounds for the Sombor index of comaximal graphs of the commutative rings $\mathbb{Z}_n$. They also found the Sombor eigenvalues and bounds for the Sombor energy of comaximal graphs of the ring $\mathbb{Z}_n$. Moreover, Anwar \emph{et al.} \cite{anwar2024Sombor} investigated the Sombor spectrum of cozero divisor graph of ring $\mathbb Z_n$. Motivated with the work on Sombor spectrum of certain algebraic graphs, in this paper, we aim to investigate the Sombor spectrum of super graphs defined on non-abelian groups.

The structure of the paper is as follows. In Section $2$, we recall the concepts of group theory and graph theory and review some of the well-known findings pertaining to Sombor spectrum. The characteristic polynomial of the Sombor matrix of $\mathcal{R}$-super $\Gamma$ graph $\Gamma^{\mathcal{R}}$ is discussed in Section $3$. For $G \in \{D_{2n}, Q_{4n}, SD_{8n}\}$, the Sombor spectra of $\Delta(G), \Delta^o(G)$, and $\Delta^c(G)$ have been obtained in Section $4$. The Sombor spectra of $\mathcal{P}_E(G), \mathcal{P}_E^o(G)$, and $\mathcal{P}_E^c(G)$ obtained in Section $5$. The Sombor spectra of $\mathcal{P}(G)$, $\mathcal{P}^o(G)$, and $\mathcal{P}^c(G)$ have been investigated in Section $6$.


\section{preliminaries}
We recall necessary definitions, results and notations of graph theory from \cite{b.West}. A graph $\Gamma$ is an ordered pair $ \Gamma = (V, E)$, where $V = V(\Gamma)$ denotes the set of vertices and $E = E(\Gamma)$ denotes the set of edges in $\Gamma$. We say that two distinct vertices $a$ and $b$ are $\mathit{adjacent}$, denoted by $a \sim b$, if there is an edge connecting $a$ and $b$.  The \emph{neighbourhood} $N(x)$ of a vertex $x$ is the collection of all vertices which are adjacent to $x$ in the graph $ \Gamma $. Additionally, we denote $N[x] = N(x) \cup \{x\}$. We are considering simple graphs, i.e. undirected graphs with no loops or repeated edges. A \emph{subgraph} of a graph $\Gamma$ is defined as a graph $\Gamma'$ for which the vertex set $V(\Gamma') \subseteq V(\Gamma)$ and $E(\Gamma') \subseteq E(\Gamma)$. The subgraph $\Gamma(X)$ of a graph $\Gamma$, induced by a set $X$, consists of the vertex set $X$ where two vertices are connected by an edge if and only if they are adjacent in $\Gamma$. A graph $\Gamma$ is said to be a \emph{complete graph} if every pair of distinct vertices are adjacent. We denote $K_n$ by the complete graph on $n$ vertices.
A subgraph $\Gamma'$ of a graph $\Gamma$ is said to be \emph{spanning  subgraph} if $V(\Gamma) = V(\Gamma')$ and $E(\Gamma') \subseteq E(\Gamma)$. 
For two graphs $\Gamma_1$ and $\Gamma_2$ with disjoint vertex sets, the \emph{join graph} $\Gamma_1 \vee \Gamma_2$ of $\Gamma_1$ and $\Gamma_2$ whose vertex set is $V(\Gamma_1) \cup V(\Gamma_2)$ and $E(\Gamma_1 \vee \Gamma_2) = E(\Gamma_1) \cup E(\Gamma_2) \cup \{ (x, y) : x \in V(\Gamma_1)~ \text{and}~  \in V(\Gamma_2)\}$. Let $\Gamma$ be a graph on $k$ vertices and $V(\Gamma) =\{u_1, u_2, \ldots, u_k\}$. Suppose that $\Gamma_1, \Gamma_2, \ldots, \Gamma_k$ are $k$ pairwise disjoint graphs. Then \emph{generalised join graph} $\Gamma[\Gamma_1, \Gamma_2, \ldots, \Gamma_k]$ of $\Gamma_1, \Gamma_2, \ldots, \Gamma_k$ is the graph formed by replacing each vertex $u_i$ of $\Gamma$ by $\Gamma_i$ and then joining each vertex of $\Gamma_i$ to every vertex of $\Gamma_j$ whenever $u_i \sim u_j$ in $\Gamma$. Let $\Gamma$ be a finite simple undirected graph with a set of vertex $V(\Gamma) = \{u_1, u_2, \ldots, u_n\}$. For a graph $\Gamma$, the Sombor matrix $S(\Gamma)$ is defined as 
\[\
S(\Gamma)= (s_{ij})= \begin{cases}
   \sqrt{(deg(u_i))^2 + (deg(u_j))^2},& ~ \text{if}~ u_i \sim u_j; \\
   0,& \text{otherwise}.    
\end{cases}
\]

Sombor matrix is real and symmetric. The spectrum of the Sombor matrix is known as the Sombor spectrum for the graph $\Gamma.$  The eigenvalues of $S(\Gamma)$, known as the \emph{Sombor eigenvalues} of the graph $\Gamma$, are denoted by $\lambda_1(\Gamma),\;\lambda_2(\Gamma) ,\cdots,\lambda_n(\Gamma)$. Let us denote the distinct eigenvalues of $\Gamma$ by $\lambda_{n_1}(\Gamma),\;  \lambda_{n_2}(\Gamma),\;  \cdots  \lambda_{n_r}(\Gamma)$ with multiplicities $m_1, m_2, \ldots, m_r$, respectively. The \emph{Sombor spectrum} $\sigma (S(\Gamma))$ of $\Gamma$ is denoted by $\displaystyle \begin{pmatrix}
\lambda_{n_1}(\Gamma) & \lambda_{n_2}(\Gamma) & \cdots& \lambda_{n_r}(\Gamma)\\
 m_1 & m_2 & \cdots & m_r
\end{pmatrix}$.

The following result gives us a relation between the clique, independent set, and  Sombor eigenvalues of a graph $\Gamma$.

\begin{lemma}\label{multiplicity}\cite{pirzada2025spectrum}
Let $\Gamma$ be a connected graph with $n$ vertices and let $S= \{u_1, u_2,\ldots,u_t \}$
be a set of vertices in $\Gamma$ such that $N(u_i)\setminus S = N(u_j )\setminus S$ for each
$1 \leq i, j \leq t$. Then the following hold:
\begin{itemize}
\item[(i)] If $S$ is an independent set, then $0$ is the Sombor eigenvalue of $G$ with multiplicity at least $t -1$.
\item[(ii)] If $S$ is a clique, then $ -d\sqrt{2}$ is the Sombor eigenvalue of $G$ with multiplicity at
least $t-1$, where $d$ is the degree of $u_i$.
\end{itemize}
\end{lemma}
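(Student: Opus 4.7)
The plan is to exploit the row-structure of the Sombor matrix $S(\Gamma)$ forced by the hypothesis that all vertices in $S$ share the same external neighborhood, which in particular forces them to have the same degree $d$ in $\Gamma$. I would set this up first: let $d=\deg(u_i)$ (a common value for all $u_i\in S$), and inspect the rows of $S(\Gamma)$ indexed by $u_1,\dots,u_t$ column by column.

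For part (i), with $S$ independent, I would verify that the rows of $S(\Gamma)$ corresponding to $u_1,\dots,u_t$ are \emph{identical}. For a column $v\notin S$: since $u_i$ and $u_j$ have the same neighbors outside $S$ and the same degree, $s_{u_i,v}=s_{u_j,v}$ (either both $0$ or both equal to $\sqrt{d^2+\deg(v)^2}$). For a column $u_k\in S$: independence gives $s_{u_i,u_k}=0=s_{u_j,u_k}$ (diagonal entries also being $0$). Hence the vectors $e_{u_1}-e_{u_2},\,e_{u_2}-e_{u_3},\dots,e_{u_{t-1}}-e_{u_t}$, which are manifestly linearly independent, each lie in the kernel of $S(\Gamma)$ (using symmetry of $S(\Gamma)$ to translate identical rows into the appropriate null condition). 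This furnishes $t-1$ eigenvectors for eigenvalue $0$.

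For part (ii), with $S$ a clique, the rows are no longer identical, but differ only in the two columns indexed by the pair in question. Concretely, for $u_i,u_j\in S$ with $i\ne j$, I would compute column-by-column: rows $u_i$ and $u_j$ agree on every column outside $\{u_i,u_j\}$ (on $V\setminus S$ by the hypothesis, and on $S\setminus\{u_i,u_j\}$ because every pair in the clique contributes the common value $\sqrt{d^2+d^2}=d\sqrt 2$). In the two remaining columns, row $u_i$ reads $(0,d\sqrt 2)$ and row $u_j$ reads $(d\sqrt 2,0)$. Then I would directly apply $S(\Gamma)$ to $x=e_{u_i}-e_{u_j}$: all entries vanish except the $u_i$-th, which equals $0-d\sqrt 2=-d\sqrt 2$, and the $u_j$-th, which equals $d\sqrt 2-0=d\sqrt 2$. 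Thus $S(\Gamma)x=-d\sqrt 2\,x$, giving $t-1$ linearly independent eigenvectors for eigenvalue $-d\sqrt 2$ (taking consecutive differences $e_{u_1}-e_{u_2},\dots,e_{u_{t-1}}-e_{u_t}$).

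There is no real obstacle here; the only subtle point is to keep track of diagonal entries and the two cases ($k\in S$ vs $k\notin S$) when checking equality of rows in (i) and the precise discrepancy of rows in (ii). The argument is essentially a two-line matrix computation once the degrees are observed to be constant on $S$, and it requires no spectral theory beyond producing explicit eigenvectors.
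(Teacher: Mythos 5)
Your argument is correct. Note that the paper does not prove this lemma at all --- it is quoted from Pirzada et al.\ \cite{pirzada2025spectrum} --- so there is no in-paper proof to compare against; your explicit-eigenvector computation (identical rows off $S$, hence $e_{u_i}-e_{u_{i+1}}$ in the kernel in the independent case, and the two-entry discrepancy giving $S(\Gamma)(e_{u_i}-e_{u_j})=-d\sqrt{2}\,(e_{u_i}-e_{u_j})$ in the clique case) is the standard route and is complete. The one step worth spelling out rather than asserting is that all vertices of $S$ share the degree $d$: in case (i) the neighborhoods $N(u_i)=N(u_i)\setminus S$ literally coincide, and in case (ii) $\deg(u_i)=(t-1)+|N(u_i)\setminus S|$ is constant; you flag this but it deserves the one-line justification.
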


We denote the square matrices of size $n$,  $J_n$ in which each entry is one, $\mathcal{O}_n$ represents the zero matrix and $I_n$ is the identity matrix. Consider an $n \times n$ matrix
\[M =  
 \displaystyle \begin{bmatrix}
 
A_{1,1} &A_{1,2} &\cdots& A_{1,s-1}& A_{1,s}\\
A_{2,1} & A_{2,2} &\cdots&  A_{2,s-1} & A_{2,s}\\
\vdots & \vdots &  \vdots & \vdots & \vdots &  \\
\vdots & \vdots &  \vdots & \vdots & \vdots &  \\
\vdots & \vdots &  \vdots & \vdots & \vdots &  \\ 
A_{s-1,1} & A_{s-1,2} &\cdots& A_{s-1,s-1}& A_{s-1,s}\\
A_{s,1} &A_{s,2} &\cdots& A_{s,s-1} &A_{s,s}\\
\end{bmatrix},\]\\
whose rows and columns are partitioned according to a partition $P =
\{P_1, P_2,\ldots,P_s\}$ of the set $X = \{1, 2,\ldots,n\}$. The quotient matrix $Q = (q_{ij} )$ (see \cite{brouwer2011spectra}) is an $s \times s$ matrix, where $q_{ij}$ th entry is the average row (column) sum of the
block $A_{ij}$ of $M$. The partition $P$ is said to be equitable, if row (column) sum of each
block $A_{i,j}$ is some constant and in such case $Q$ is known as the equitable quotient
matrix.

The next result gives a relation between the eigenvalues of $M$ and the eigenvalues of $Q$.

\begin{theorem}{\cite[Theorem 3.1]{equitablematrix}}\label{equitable matrix}
Let $M$ be an $n \times n$ matrix such that $M_{ij} = s_{ij} J_{n_i,n_j}$ for $i \neq j$, and $M_{ii} = s_{ii} J_{n_i,n_i} + p_i I_{n_i}$. Then the equitable quotient matrix of $M$ is $B = (b_{ij})$ with $b_{ij} = s_{ij} n_j$ if $i \neq j$, and $b_{ii} = s_{ii} n_i + p_i$. Moreover, 
$\sigma(M) = \sigma(B) \cup \{ p_1 ^{[n_1 - 1]}, \dots, p_t ^{[n_t - 1]}\}$.

\end{theorem}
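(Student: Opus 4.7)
The plan is to prove this in two natural stages: first verify that the block partition is equitable with the claimed quotient matrix $B$, and then exhibit an $M$-invariant orthogonal decomposition $\mathbb{R}^n = V_1 \oplus V_2$ such that $V_1$ carries the spectrum of $B$ while $V_2$ accounts for the remaining eigenvalues $\{p_i^{[n_i - 1]}\}$.

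First I would check that the partition $P = \{P_1, \dots, P_s\}$ corresponding to the block structure is equitable. For $i \neq j$, each row of $M_{ij} = s_{ij} J_{n_i, n_j}$ has sum $s_{ij} n_j$, and each row of $M_{ii} = s_{ii} J_{n_i, n_i} + p_i I_{n_i}$ has sum $s_{ii} n_i + p_i$. These constant row sums coincide with the stated entries of $B$, so $B$ is the equitable quotient matrix of $M$. This disposes of the first assertion.

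Next, for each $i$ let $\widetilde{\mathbf{1}}_i \in \mathbb{R}^n$ denote the indicator vector of block $P_i$, and set $V_1 = \operatorname{span}\{\widetilde{\mathbf{1}}_1, \dots, \widetilde{\mathbf{1}}_s\}$ and $V_2 = V_1^{\perp}$. A vector $v = (v_1, \dots, v_s)$ lies in $V_2$ precisely when $\mathbf{1}_{n_i}^{T} v_i = 0$ for every $i$, which is equivalent to $J_{n_i, n_i} v_i = 0$. On $V_2$ the block-by-block computation
\[
(Mv)_k = \sum_{j \neq k} s_{kj} J_{n_k, n_j} v_j + \bigl( s_{kk} J_{n_k, n_k} + p_k I_{n_k} \bigr) v_k = p_k v_k
\]
shows that $M|_{V_2}$ acts as multiplication by $p_k$ on the $k$-th block. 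Since $\dim\{v_k : \mathbf{1}_{n_k}^{T} v_k = 0\} = n_k - 1$, each $p_k$ arises as an eigenvalue with multiplicity at least $n_k - 1$, accounting for $\sum_k (n_k - 1) = n - s$ eigenvalues in total.

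Finally, for the invariant subspace $V_1$ I would compute $M\widetilde{\mathbf{1}}_i$ block-by-block: its $k$-th block equals $s_{ki} n_i \mathbf{1}_{n_k}$ when $k \neq i$ and $(s_{ii} n_i + p_i) \mathbf{1}_{n_i}$ when $k = i$, which in either case is exactly $b_{ki} \mathbf{1}_{n_k}$. Writing $v = \sum_i c_i \widetilde{\mathbf{1}}_i$, the equation $Mv = \lambda v$ therefore reduces to $Bc = \lambda c$ with $c = (c_1, \dots, c_s)^{T}$, so the spectrum of $M|_{V_1}$ coincides with $\sigma(B)$ and contributes the remaining $s$ eigenvalues. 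Combining the two invariant subspaces gives $\sigma(M) = \sigma(B) \cup \{p_1^{[n_1 - 1]}, \dots, p_s^{[n_s - 1]}\}$. No step is intrinsically difficult; the only point requiring some care is that the spanning vectors $\widetilde{\mathbf{1}}_i$ of $V_1$ are not orthonormal, so one should verify that $M|_{V_1}$ is genuinely represented by $B$ (not a conjugate or transpose) in this basis—but since conjugation preserves spectra, this is a minor bookkeeping issue rather than a real obstacle.
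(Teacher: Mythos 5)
Your argument is correct: the row-sum check establishes equitability, the mean-zero vectors on each block give $p_k$ with multiplicity $n_k-1$, and the block indicator vectors span an invariant complement on which $M$ is represented exactly by $B$, so the multiset union follows from the direct-sum decomposition. Note that the paper offers no proof of this statement — it is quoted from the cited reference of You, Yang, So and Xi — and your invariant-subspace argument is precisely the standard proof given there, so there is nothing to reconcile.
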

\begin{theorem} \label{equitable}\cite{brouwer2011spectra}
    Let $M$ be an $n \times n$ matrix and $Q$ be its quotient matrix.
Then the following results hold:
\begin{itemize}
    \item[(i)]  If the partition $P$ of the set  $X$ of matrix $M$ is not equitable, then the eigenvalues of $Q$ interlace the eigenvalues of $M$.
\item[(ii)] If the partition $P$ of the set $X$ of matrix $M$ is equitable, then each of the eigenvalue of
$Q$ is the eigenvalue of $M$.
\end{itemize}
\end{theorem}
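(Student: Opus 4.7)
My plan is to encode the partition $P = \{P_1, \ldots, P_s\}$ via its \emph{characteristic matrix} $C \in \{0,1\}^{n \times s}$, whose $k$-th column is the indicator vector of $P_k$. Since the blocks are disjoint, the columns of $C$ are mutually orthogonal and $C^T C = D$, where $D = \mathrm{diag}(|P_1|, \ldots, |P_s|)$; in particular $C$ has full column rank. The central routine computation I would carry out is the identity $C^T M C = D Q$: the $(i,j)$-entry of $C^T M C$ is the total sum of entries in the block $A_{ij}$, which by the definition of $Q$ equals $|P_i| \cdot q_{ij}$.

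For part (ii), I would rephrase equitability as the matrix identity $MC = CQ$. Indeed, the partition being equitable means that each row of $A_{ij}$ sums to the constant $q_{ij}$, and computing the $(r, j)$-entry of $MC$ for $r \in P_i$ gives exactly that row sum, while the $(r, j)$-entry of $CQ$ is $q_{ij}$. Given $MC = CQ$, any eigenpair $(v, \lambda)$ of $Q$ with $v \neq 0$ lifts to an eigenpair of $M$: the vector $Cv$ is nonzero by linear independence of the columns of $C$, and satisfies $M(Cv) = CQv = \lambda(Cv)$.

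For part (i), I would normalize by setting $S := C D^{-1/2}$. Then $S^T S = D^{-1/2} C^T C D^{-1/2} = I_s$, so $S$ has orthonormal columns, and
\[ S^T M S \;=\; D^{-1/2} C^T M C \, D^{-1/2} \;=\; D^{-1/2} D Q D^{-1/2} \;=\; D^{1/2} Q D^{-1/2}, \]
which is similar to $Q$ and thus shares its spectrum. Applying Cauchy's interlacing theorem to the compression $S^T M S$ of the symmetric matrix $M$ shows that the eigenvalues of $S^T M S$ — equivalently, of $Q$ — interlace those of $M$.

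The main conceptual step is the identity $C^T M C = D Q$, which converts the combinatorial data of the partition into the standard linear-algebraic framework of compressing a symmetric matrix by an isometry; once this is in place, both parts follow almost immediately from eigenvector lifting and from Cauchy interlacing. The one genuine hypothesis behind the interlacing step is that $M$ be real symmetric, which is automatic in our applications since the Sombor matrix of any graph is real symmetric.
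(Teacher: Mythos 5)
Your proof is correct. The paper itself gives no argument for this statement --- it is quoted verbatim from Brouwer and Haemers \cite{brouwer2011spectra} --- and what you have written is precisely the standard proof from that source: the identity $C^T M C = DQ$ for the characteristic matrix $C$, the reformulation of equitability as $MC = CQ$ with eigenvector lifting $v \mapsto Cv$ for part (ii), and Cauchy interlacing applied to the compression $S^T M S$ with $S = CD^{-1/2}$ for part (i). Your remark that part (i) genuinely requires $M$ to be real symmetric (a hypothesis omitted from the paper's statement but satisfied by every Sombor matrix) is a correct and worthwhile clarification; the only minor quibble is that interlacing in fact holds for any partition of a symmetric matrix, equitable or not, so the ``not equitable'' hypothesis in (i) is not needed --- your argument already shows this.
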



We end this section by recalling the structure of the following three non-abelian groups

For $n \geq 3$, the \emph{dihedral group} $D_{2n}$ is a group of order $2n$ defined by 
\[
D_{2n} = \langle a, b : a^{n} = b^2 = e, \ ba = a^{-1}b \rangle.
\]
Every element of $D_{2n} {\setminus} \langle a \rangle$ is known to be of the form $a^ib$ for some $0 \leq i \leq n-1$, and it follows that $ \langle a^ib \rangle =   \{ e, a^ib \}  $. Consequently, we have 

\begin{equation}\label{eq(1)}
D_{2n} = \langle a \rangle \cup \bigcup\limits_{ i=  0}^{n-1} \langle a^ib \rangle.
\end{equation}

For $n \geq 2$, the \emph{generalized quaternion group} $Q_{4n}$ is a group of order $4n$ defined by
\[
Q_{4n} = \langle a, b : a^{2n} = e, a^{n} = b^2, ba = a^{-1}b \rangle.
\]
Observe that every element in $Q_{4n} \setminus \langle a \rangle$ can be written as $a^ib$ for some $1 \leq i \leq 2n-1$. Also, $ \langle a^ib \rangle =  \langle a^{n+i}b \rangle =  \{ e, a^ib,a^n,  a^{n+i}b \}  $ for all $0 \leq i \leq n-1$. Thus, we have

\begin{equation}\label{eq(2)}
Q_{4n} = \langle a \rangle \bigcup\limits_{ i=  0}^{n-1} \langle a^ib \rangle.
\end{equation}


For $n \geq 2$, the \emph{semidihedral group} $SD_{8n}$ is a group of order $8n$ defined by the generators and relations

\[
SD_{8n} = \langle a, b : a^{4n} = e, \, b^2 = e, \, ba = a^{2n - 1}b \rangle.
\]
We have $$ ba^i = \left\{ \begin{array}{ll}
a^{4n -i}b & \mbox{if $i$ is even,}\\
a^{2n - i}b& \mbox{if $i$ is odd,}\end{array} \right.$$
so that every element of $SD_{8n} {\setminus} \langle a \rangle$ is of the form $a^ib$ for some $0 \leq i \leq 4n-1$. We denote the subgroups $P_i = \langle a^{2i}b \rangle = \{e, a^{2i}b\}$ and $ Q_j =  \langle a^{2j + 1}b \rangle = \{e, a^{2n}, a^{2j +1}b, a^{2n + 2j +1}b\} $. Then we have
\begin{equation}\label{Eq-3}
    SD_{8n} = \langle a \rangle \cup \left( \bigcup\limits_{ i=0}^{2n-1} P_i \right) \cup \left( \bigcup\limits_{ j=  0}^{n-1} Q_{j}\right),
\end{equation}
further, \[\ 
 Z(SD_{8n}) = \begin{cases}
    \{e,a^{2n}\},& \text{if $n$ is even} \\
    \{e,a^{2n},a^n,a^{3n}\},& \text{if $n$ is odd}.
    \end{cases}
\]

\section{$\mathcal{R}$-super graph of a graph}
 Let $\Gamma$ be a graph and let $\mathcal{R}$ be an equivalence relation on $V(\Gamma)$. Let $C_1, C_2, \ldots, C_k$ be the distinct $\mathcal{R}$-equivalence classes of $V(\Gamma)$ with $|C_i|=n_i$, for $1\leq i \leq k$. The $\mathcal{R}$-\emph{compressed} $\Gamma$ graph $\Im_{_{\Gamma^{\mathcal{R}}}}$ is a simple graph with $V(\Im_{_{\Gamma^{\mathcal{R}}}})=\{C_1, C_2, \ldots, C_k\}$ and two distinct vertices $C_i$ and $C_j$ are adjacent if there exist $x \in C_i$ and  $y \in C_j$ such that $x$ is adjacent to $y$ in $\Gamma$. The $\mathcal{R}$-\emph{super} $\Gamma$ graph $\Gamma^{\mathcal{R}}$ is a simple graph with vertex set $V(\Gamma)$ and two distinct vertices are join by an edge if either they are in same $\mathcal{R}$-equivalence class or there exist $x' \in [x]_{\mathcal{R}}$ and $y' \in [y]_{\mathcal{R}}$ such that $x' \sim y'$ in $\Gamma$. In this section, we discuss the characteristic polynomial of the  Sombor matrix of $\mathcal{R}$-super $\Gamma$ graph $\Gamma^{\mathcal{R}}$. The following results will be useful in the sequel.

\begin{theorem}{\cite[Proposition 3.1]{spectrumofsupergraphs}}
Consider a graph  $\Gamma$  and  $\mathcal{R}_1$  and $\mathcal{R}_2$  are two equivalence relations on $V(\Gamma)$.
If ${\mathcal{R}_1}\subseteq {\mathcal{R
}_2}$, then  $\Gamma^{\mathcal{R}_1}$ is a spanning subgraph of $\Gamma^{\mathcal{R}_2}$.
\end{theorem}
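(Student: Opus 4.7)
The plan is to unpack the definition of $\mathcal{R}$-super $\Gamma$ graph and use the fact that a finer equivalence relation refines the equivalence classes. First I would observe that both $\Gamma^{\mathcal{R}_1}$ and $\Gamma^{\mathcal{R}_2}$ have the same vertex set $V(\Gamma)$, so the spanning subgraph condition on vertices is automatic. It remains to check the edge containment $E(\Gamma^{\mathcal{R}_1}) \subseteq E(\Gamma^{\mathcal{R}_2})$.

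The key observation I would isolate as a small lemma in the argument: since $\mathcal{R}_1 \subseteq \mathcal{R}_2$ as subsets of $V(\Gamma) \times V(\Gamma)$, for every $x \in V(\Gamma)$ the $\mathcal{R}_1$-class satisfies $[x]_{\mathcal{R}_1} \subseteq [x]_{\mathcal{R}_2}$. Indeed, if $y \in [x]_{\mathcal{R}_1}$ then $(x,y) \in \mathcal{R}_1 \subseteq \mathcal{R}_2$, so $y \in [x]_{\mathcal{R}_2}$.

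Now take an arbitrary edge $\{x,y\}$ of $\Gamma^{\mathcal{R}_1}$ and split into the two cases in the definition. If $x$ and $y$ lie in the same $\mathcal{R}_1$-class, then by the containment above they lie in the same $\mathcal{R}_2$-class, hence are adjacent in $\Gamma^{\mathcal{R}_2}$. Otherwise there exist $x' \in [x]_{\mathcal{R}_1}$ and $y' \in [y]_{\mathcal{R}_1}$ with $x' \sim y'$ in $\Gamma$; but $x' \in [x]_{\mathcal{R}_2}$ and $y' \in [y]_{\mathcal{R}_2}$ by the lemma, so the same witnesses show that $x$ and $y$ are adjacent in $\Gamma^{\mathcal{R}_2}$.

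There is really no main obstacle here; the statement is essentially a formality, and the only content is the class-containment observation. The proof is two or three lines once that is stated, and the writeup should make sure to handle both clauses of the super-graph adjacency definition to avoid any gap.
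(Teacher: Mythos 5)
Your proof is correct: the class-containment observation $[x]_{\mathcal{R}_1}\subseteq[x]_{\mathcal{R}_2}$ is exactly the content of the statement, and you handle both clauses of the adjacency definition cleanly. The paper itself gives no proof (it cites the result from the reference on spectra of super commuting graphs), but your argument is the standard one and there is nothing to add.
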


\begin{theorem} \label{isomorphism}{\cite[Theorem 3.2]{spectrumofsupergraphs}}
Consider a graph $\Gamma$ and let $\mathcal{R}$ is an equivalence relation on $V(\Gamma)$. Let $C_1, C_2, \ldots, C_k$ are the distinct  $\mathcal{R}$-equivalence classes of $V(\Gamma)$ with  $|C_i| = n_i$~~ for $1 \leq i \leq k.$ Then $\Gamma^\mathcal{R}$ is isomorphic to  $\Im_{_{\Gamma^{\mathcal{R}}}}[K_{n_1}, K_{n_2},\ldots, K_{n_k}]$.
\end{theorem}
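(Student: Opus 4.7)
The plan is to exhibit an explicit vertex bijection and check that the two adjacency conditions coincide by unwinding the definitions of $\Gamma^{\mathcal{R}}$, the generalized join, and the compressed graph $\Im_{_{\Gamma^{\mathcal{R}}}}$. Since the generalized join is obtained from $\Im_{_{\Gamma^{\mathcal{R}}}}$ by blowing up its vertex $C_i$ into a clique on $n_i$ vertices, the natural candidate is the identity on vertex sets: for each $1\leq i\leq k$, match the $n_i$ vertices of $C_i\subseteq V(\Gamma)$ with the $n_i$ vertices of the $i$th copy $K_{n_i}$ in the generalized join. Let $\phi:V(\Gamma^{\mathcal{R}})\to V\bigl(\Im_{_{\Gamma^{\mathcal{R}}}}[K_{n_1},\ldots,K_{n_k}]\bigr)$ denote this bijection.

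Next I would verify that $\phi$ is an isomorphism by a case analysis on a pair of distinct vertices $x,y\in V(\Gamma)$. In the first case, $x,y$ lie in the same class $C_i$. By definition of $\Gamma^{\mathcal{R}}$, any two distinct vertices of a common $\mathcal{R}$-class are adjacent, so $x\sim y$ in $\Gamma^{\mathcal{R}}$; on the join side, $\phi(x),\phi(y)$ both lie in the same copy of $K_{n_i}$, hence are adjacent there. In the second case, $x\in C_i$ and $y\in C_j$ with $i\neq j$. By definition, $x\sim y$ in $\Gamma^{\mathcal{R}}$ iff there exist $x'\in [x]_{\mathcal{R}}=C_i$ and $y'\in[y]_{\mathcal{R}}=C_j$ with $x'\sim y'$ in $\Gamma$; by the definition of the compressed graph, this holds iff $C_i\sim C_j$ in $\Im_{_{\Gamma^{\mathcal{R}}}}$. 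On the join side, $\phi(x)\sim \phi(y)$ iff the vertex of $K_{n_i}$ is joined to the vertex of $K_{n_j}$, which by construction of the generalized join is iff $C_i\sim C_j$ in $\Im_{_{\Gamma^{\mathcal{R}}}}$. Thus the two conditions coincide in every case, and $\phi$ preserves adjacency and non-adjacency.

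There is really no hard obstacle here; this is a definition-chasing argument. The only subtlety worth flagging explicitly is the first case, where one must invoke the remark (stated in the introduction) that the subgraph of $\Gamma^{\mathcal{R}}$ induced by any single equivalence class is complete — this is the point that forces the blow-ups to be \emph{complete} graphs $K_{n_i}$ rather than arbitrary induced subgraphs. Once this is noted and the two cases above are recorded, the conclusion $\Gamma^{\mathcal{R}}\cong \Im_{_{\Gamma^{\mathcal{R}}}}[K_{n_1},K_{n_2},\ldots,K_{n_k}]$ follows immediately.
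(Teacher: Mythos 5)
Your argument is correct: the identity map on $V(\Gamma)$, together with the two-case check (same class versus different classes, the latter reducing to the definition of adjacency in $\Im_{_{\Gamma^{\mathcal{R}}}}$), is exactly the definition-chasing verification this statement requires, and you rightly flag that the within-class completeness of $\Gamma^{\mathcal{R}}$ is what justifies the blow-ups being the complete graphs $K_{n_i}$. Note that the paper itself gives no proof here --- it imports the result as Theorem 3.2 of \cite{spectrumofsupergraphs} --- so there is no in-paper argument to compare against, but your proof is the standard one and is complete.
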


\begin{theorem} \label{connectedness}{\cite[Theorem 3.3]{spectrumofsupergraphs}}
Consider a graph $\Gamma$ and let $\mathcal{R}$ is an equivalence relation on $V(\Gamma)$. Let $C_1, C_2, \ldots, C_k$ are the distinct  $\mathcal{R}$-equivalence classes of $V(\Gamma)$ and let $\Gamma_{i}$ are the induced subgraph of $\Gamma$ corresponding to the vertex set $C_i$, for  $1 \leq i \leq k$. If $\Gamma$ is connected, then $\Im_{_{\Gamma^{\mathcal{R}}}}$ is connected. Conversely, if $\Im_{_{\Gamma^{\mathcal{R}}}}$ is connected and all the $\Gamma_{i}$  are connected, then $\Gamma$ is connected.
\end{theorem}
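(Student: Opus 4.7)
The plan is to establish both implications by translating walks between $\Gamma$ and its compressed graph $\Im_{_{\Gamma^{\mathcal{R}}}}$, using the equivalence-class projection in the forward direction and lifting through the additional connectedness of the pieces $\Gamma_i$ in the converse.

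For the forward direction, I would assume $\Gamma$ is connected and show that any two classes $C_i, C_j$ are joined in $\Im_{_{\Gamma^{\mathcal{R}}}}$. Pick $x \in C_i$ and $y \in C_j$ and choose a path $x = v_0, v_1, \ldots, v_m = y$ in $\Gamma$. For each edge $v_{t-1}v_t$, the classes $[v_{t-1}]_{\mathcal{R}}$ and $[v_t]_{\mathcal{R}}$ either coincide or are distinct and therefore adjacent in $\Im_{_{\Gamma^{\mathcal{R}}}}$ by the very definition of the compressed graph. Collapsing consecutive repetitions in the projected sequence $[v_0]_{\mathcal{R}}, \ldots, [v_m]_{\mathcal{R}}$ yields a walk from $C_i$ to $C_j$ in $\Im_{_{\Gamma^{\mathcal{R}}}}$, which completes this direction.

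For the converse, I would assume $\Im_{_{\Gamma^{\mathcal{R}}}}$ is connected and every $\Gamma_i$ is connected, and aim to show any $x, y \in V(\Gamma)$ lie in the same component of $\Gamma$. Say $x \in C_{i_0}$ and $y \in C_{i_m}$; connectedness of the compressed graph gives a sequence $C_{i_0}, C_{i_1}, \ldots, C_{i_m}$ whose consecutive terms are adjacent in $\Im_{_{\Gamma^{\mathcal{R}}}}$. By the definition of adjacency there, each step $C_{i_{t-1}} \sim C_{i_t}$ produces an edge $u_t w_t$ of $\Gamma$ with $u_t \in C_{i_{t-1}}$ and $w_t \in C_{i_t}$. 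Setting $w_0 = x$ and $u_{m+1} = y$, the vertices $w_t$ and $u_{t+1}$ both lie in the class $C_{i_t}$, so connectedness of the induced subgraph $\Gamma_{i_t}$ furnishes a path between them entirely inside that class. Concatenating these intra-class paths with the single inter-class edges $u_t w_t$ produces an $x$--$y$ walk in $\Gamma$.

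The only real content is in the converse: the vertex by which the walk enters a class $C_{i_t}$ need not equal the vertex by which it exits, and the hypothesis that every $\Gamma_i$ is connected is precisely what bridges that gap. Without it the converse fails, as already visible in the toy example where $\Gamma = K_2 \sqcup K_2$ with vertices $\{a_1,a_2,b_1,b_2\}$ and $\mathcal{R}$ identifying $\{a_1,b_1\}$ and $\{a_2,b_2\}$: the compressed graph is a single edge, yet $\Gamma$ is disconnected and the induced subgraphs on the classes are empty, hence disconnected.
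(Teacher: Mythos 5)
Your proof is correct. The paper does not actually prove this statement --- it imports it verbatim as Theorem 3.3 of the cited reference \cite{spectrumofsupergraphs} --- so there is no internal argument to compare against; your two-way translation of walks (projecting a $\Gamma$-path onto equivalence classes for the forward direction, and lifting a path in $\Im_{_{\Gamma^{\mathcal{R}}}}$ by stitching the inter-class edges together with intra-class paths supplied by the connectedness of the $\Gamma_i$) is the natural and complete argument, and your $K_2 \sqcup K_2$ example correctly isolates why the hypothesis on the $\Gamma_i$ cannot be dropped in the converse.
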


In the following theorem, we obtain the characteristic polynomial of Sombor matrix  $S(\Gamma^\mathcal{R})$.
\begin{theorem} \label{Characteristic polynomial}
Let $\Gamma$ be a connected graph on $n$ vertices and $\mathcal{R}$ be an equivalence relation on  $V(\Gamma)$. Assume $C_1, C_2, \ldots, C_k$ be the distinct $\mathcal{R}$-equivalence classes of  $V(\Gamma)$  with  $|C_i| = n_i$  for $1 \leq i \leq k$. 
Then the characteristic polynomial of Sombor matrix  $S(\Gamma^\mathcal{R})$ is given by
    \[
    \chi(S(\Gamma^\mathcal{R}), x) = \chi(N, x)\prod_{i=1}^{k} \left(x + d_{i}\sqrt{2}\right)^{n_i - 1},
    \]
    where
    \[N =
    \displaystyle
    \begin{bmatrix}
    (n_1 - 1)d_1\sqrt{2} & n_{2}\sqrt{{d_1}^2 + {d_2}^2} & \cdots &  \cdots & n_{k}\sqrt{{d_1}^2 + {d_k}^2}  \\
     n_{1}\sqrt{{d_1}^2 + {d_2}^2} & (n_2 - 1)d_2\sqrt{2} & \cdots & \cdots & n_{k}\sqrt{{d_2}^2 + {d_k}^2} \\
    \vdots & \vdots & \ddots & &  \vdots \\
     \vdots & \vdots &  & \ddots & \vdots \\
    n_{1}\sqrt{{d_1}^2 + {d_k}^2} &  n_{2}\sqrt{{d_2}^2 + {d_k}^2}  & \cdots & \cdots &  (n_k - 1))d_k\sqrt{2}
    \end{bmatrix}.\]
  \begin{proof}
In view of Theorem \ref{connectedness}, $\Delta_{\Gamma^\mathcal{R}}$ is connected. 
Also, the subgraph induced by the vertices of the set $C_{i}$  is complete for each $1 \leq i \leq k$. Therefore, we have $\Gamma^\mathcal{R}$ is isomorphic to $\Delta_{\Gamma^\mathcal{R}}[K_{n_1}, K_{n_2}, \ldots, K_{n_k}]$ (see Theorem \ref{isomorphism}).
Let $\{v^{1}_{1},v^{1}_{2}, \ldots, v^{1}_{n_1},v^{2}_{2},\ldots, v^{2}_{n_2}, \ldots, v^{k}_{1},\ldots,v^{k}_{n_k}\}$ be a vertex labeling of the graph $\Gamma^{\mathcal{R}}$,
where $v^{i}_{j}\in C_i$, $1 \leq j \leq n_i$  and $1 \leq i \leq k$. For $1 \leq i \neq j \leq k$, we observed that the subgraph induced by the vertices belongs to the set $C_i \cup C_j$ in $\Gamma^{\mathcal{R}}$ is either complete or $x\nsim y$ for all $x \in C_i$ and $y \in C_j$. For $1 \leq i \leq k$, as a result,  we get $deg(v^i_j)$ is equal for all $1 \leq j \leq n_i$. Therefore, we suppose that $deg(v^i_j) = d_j$, where $1 \leq j \leq n_i$  and $1 \leq i \leq k$. With this labeling, the Sombor matrix of the graph $\Gamma^\mathcal{R}$ is
\begin{equation}
S(\Gamma^\mathcal{R}) =  
 \displaystyle \begin{bmatrix}
 
A_{1,1} &A_{1,2} &\cdots& \cdots& A_{1,k}\\
A_{2,1} & A_{2,2} &\cdots& \cdots& A_{2,k}\\
\vdots & \vdots &   \ddots & & \vdots & \\

\vdots & \vdots & &  \ddots & \vdots &  \\ 
A_{k,1} &A_{k,2} &\cdots& \cdots & A_{k,k}\\
\end{bmatrix},
\end{equation}
where, 
\[ 
\begin{matrix}
  A_{i,i}= d_{i}\sqrt{2}(J_{{n_i} \times{n_i}}-I_{{n_i} \times{n_i}}),~~  
   \text{for~~ $1 \leq i \leq k$}\\
\\
 and\;  A_{i,j}= \begin{dcases}
      (\sqrt{(d_{i})^2 + (d_{j})^2}) J_{{n_i} \times{n_j}} ; \text{if} ~~v^{i}_{n_i} \sim v^{j}_{n_j},\\
          0 ~~~~ ;  ~~~~~~~~~  otherwise   
 , \end{dcases}
\end{matrix}
\]
Note that for each $i\in\{1,2,\ldots,k\}$, the vertices $v^{i}_{n_i}$ in $\Gamma$ form a clique $K_{n_i}$ and each vertex of $K_{n_i}$ share the same neighborhood. Therefore, by Lemma \ref{multiplicity}, $-d_{1}\sqrt{2}, -d_{2}\sqrt{2}, \ldots , -d_{k}\sqrt{2}$ are the eigenvalues of the Sombor matrix with multiplicities $ (n_1-1), (n_2-1), \ldots, (n_k-1)$,  respectively.     
In view of Theorem \ref{equitable matrix}, the remaining eigenvalues of the Sombor matrix of $\Gamma^{R}$ are the roots of the equitable quotient matrix given in $(2)$.
\end{proof}
\end{theorem}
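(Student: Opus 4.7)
\medskip

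\noindent\textbf{Proof proposal.}
The plan is to reduce the problem to a block matrix with very constrained structure and then read off the spectrum via the two tools available: Lemma \ref{multiplicity} (for cliques sharing a common external neighborhood) and Theorem \ref{equitable matrix} (for equitable quotients of block matrices built from $J$-blocks). The key structural input is Theorem \ref{isomorphism}, which gives $\Gamma^{\mathcal{R}}\cong \Im_{\Gamma^{\mathcal{R}}}[K_{n_1},\ldots,K_{n_k}]$. From this description, I would first observe that in the generalised join every vertex of $C_i$ is adjacent to every other vertex of $C_i$ and to every vertex of $C_j$ precisely when $C_i\sim C_j$ in $\Im_{\Gamma^{\mathcal{R}}}$. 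Consequently all vertices in $C_i$ share a common degree $d_i$, and any two vertices of $C_i$ have the same neighborhood outside $C_i$. This uniformity is what makes the Sombor weights $\sqrt{d_u^2+d_v^2}$ depend only on the pair of classes containing $u$ and $v$, not on the individual vertices.

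Next I would list the vertices class-by-class and write $S(\Gamma^{\mathcal{R}})$ in $k\times k$ block form. With the uniformity of degrees, the diagonal block at position $(i,i)$ is $d_i\sqrt{2}\,(J_{n_i}-I_{n_i})$ (each vertex of $C_i$ is adjacent to the $n_i-1$ other vertices of $C_i$, and these entries all carry weight $\sqrt{d_i^2+d_i^2}=d_i\sqrt{2}$), while the off-diagonal block at $(i,j)$ is either the all-zeros block (if $C_i\not\sim C_j$ in $\Im_{\Gamma^{\mathcal{R}}}$) or the constant block $\sqrt{d_i^2+d_j^2}\,J_{n_i\times n_j}$ (if $C_i\sim C_j$). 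Crucially, this is exactly the shape of matrix required by Theorem \ref{equitable matrix}: each block is of the form $s_{ij}J_{n_i\times n_j}$ off the diagonal and of the form $s_{ii}J_{n_i\times n_i}+p_iI_{n_i}$ on the diagonal, with $s_{ii}=d_i\sqrt{2}$ and $p_i=-d_i\sqrt{2}$.

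Having set this up, I would finish in two steps. First, for each $i$ the set $C_i$ is a clique of $\Gamma^{\mathcal{R}}$ in which all vertices share the same external neighborhood, so Lemma \ref{multiplicity}(ii) contributes $-d_i\sqrt{2}$ as an eigenvalue of $S(\Gamma^{\mathcal{R}})$ with multiplicity at least $n_i-1$; summing gives $\sum_i(n_i-1)=n-k$ eigenvalues accounted for in this way, yielding the factor $\prod_{i=1}^{k}(x+d_i\sqrt{2})^{n_i-1}$ of the characteristic polynomial. Second, Theorem \ref{equitable matrix} tells us that the remaining $k$ eigenvalues are precisely the eigenvalues of the equitable quotient matrix whose $(i,j)$-entry is $s_{ij}n_j$ for $i\neq j$ and $s_{ii}n_i+p_i=d_i\sqrt{2}\,n_i-d_i\sqrt{2}=(n_i-1)d_i\sqrt{2}$ on the diagonal. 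This is exactly the matrix $N$ in the statement, so $\chi(S(\Gamma^{\mathcal{R}}),x)=\chi(N,x)\prod_{i=1}^k(x+d_i\sqrt{2})^{n_i-1}$.

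The main obstacle, such as it is, is the verification that vertices within a class really do share a common degree and common external neighborhood in $\Gamma^{\mathcal{R}}$ (so that the Sombor weights depend only on $(i,j)$). Once this is established from the generalised join description, the argument is essentially an application of Theorem \ref{equitable matrix} with the correct identification of the parameters $s_{ij}$ and $p_i$; everything else is bookkeeping.
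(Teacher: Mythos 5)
Your proposal is correct and follows essentially the same route as the paper's proof: decompose $\Gamma^{\mathcal{R}}$ as the generalised join $\Im_{\Gamma^{\mathcal{R}}}[K_{n_1},\ldots,K_{n_k}]$, write $S(\Gamma^{\mathcal{R}})$ in the $k\times k$ block form with constant $J$-blocks, extract the eigenvalues $-d_i\sqrt{2}$ of multiplicity $n_i-1$ from the cliques via Lemma \ref{multiplicity}, and obtain the remaining $k$ eigenvalues from the equitable quotient matrix $N$ via Theorem \ref{equitable matrix}. Your explicit identification of $s_{ii}=d_i\sqrt{2}$ and $p_i=-d_i\sqrt{2}$ in fact makes Theorem \ref{equitable matrix} deliver the entire factorization on its own, which is a slightly cleaner packaging of the same argument.
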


The following corollary is a consequence of Theorem \ref{Characteristic polynomial}.

\begin{corollary} \label{Generalize join}
Let $\Gamma$ be the generalized join graph $K_{1,k-1}[K_{n_1}, K_{n_2}, \ldots, K_{n_k}]$, where $n= n_1 + n_2 + \cdots + n_k$. Then the characteristic polynomial of $S(\Gamma)$ is given by
\begin{align}
    \chi(S(\Gamma), x) = & \prod_{i=1}^{k} \left(x + \sqrt{2} d_i\right)^{n_i - 1} 
      \left( \prod_{i=1}^{k} \{x - (n_{i}-1)\sqrt{2} d_i\} - n_{1}n_{2}({d_1}^2 + {d_2}^2) \prod_{i=3}^{k} \{x - (n_{i}-1)\sqrt{2} d_i\} \right. \nonumber \\
    & \left. - \cdots - n_{1}n_{k} ({d_1}^2 + {d_k}^2) \prod_{i=2}^{k-1} \{x - (n_{i}-1)\sqrt{2} d_i\} \right),
\end{align}
where $d_1 = n-1$ and $d_i = n_1 + n_i -1$ with $2 \leq i \leq k$.
\begin{proof}
Suppose that $\{v^{1}_{1},v^{1}_{2}, \ldots, v^{1}_{n_1},v^{2}_{1},\ldots, v^{2}_{n_2}, \ldots, v^{k}_{1},\ldots,v^{k}_{n_k} \}$ be the vertex labeling of $\Gamma$, where $v^{i}_{j}$ are the vertices of $K_{n_i}$ with $1 \leq j \leq n_i$. Note that N$[v^1_j] = V(\Gamma)$ for all $1 \leq j \leq n_1$ and N$[v^i_j] = V(K_{n_i}) \cup V(K_{n_1})$ for all $2 \leq i \leq k$. Therefore, we have $deg(v^{1}_{j})= n- 1= d_1$ for all $1 \leq j \leq n_1$ and $deg(v^{i}_{j})= n_1 + n_i - 1 = d_i$ for all $2 \leq i \leq k$ and $1 \leq j \leq n_i$. In view of Theorem \ref{Characteristic polynomial}, we have

\[
    \chi(S(\Gamma), x) = \chi(N, x)\prod_{i=1}^{k} \left(x +  \sqrt{2} d_i \right)^{n_i - 1},
    \]
    where
    \[
    N =
    \displaystyle
    \begin{bmatrix}
    (n_1 - 1)\sqrt{2}d_1 & n_{2}\sqrt{{d_1}^2 + {d_2}^2} & \cdots &  \cdots & n_{k}\sqrt{{d_1}^2 + {d_k}^2}  \\
     n_{1}\sqrt{{d_1}^2 + {d_2}^2} & (n_2 - 1)d_2\sqrt{2} & \cdots & \cdots & 0 \\
    \vdots & \vdots & \ddots & &  \vdots \\
     \vdots & \vdots &  & \ddots & \vdots \\
    n_{1}\sqrt{{d_1}^2 + {d_k}^2} &  0  & \cdots & \cdots &  (n_k - 1))d_k\sqrt{2}
    \end{bmatrix}.
    \]
The characteristic polynomial of matrix $N$ is given by  
\[
 \chi(N, x) =
    \displaystyle
    \begin{vmatrix}
    x-(n_1 - 1)d_1\sqrt{2} & n_{2}\sqrt{{d_1}^2 + {d_2}^2} & \cdots &  \cdots & n_{k}\sqrt{{d_1}^2 + {d_k}^2}  \\
     n_{1}\sqrt{{d_1}^2 + {d_2}^2} & x-(n_2 - 1)d_2\sqrt{2} & \cdots & \cdots & 0 \\
    \vdots & \vdots & \ddots & &  \vdots \\
     \vdots & \vdots &  & \ddots & \vdots \\
    n_{1}\sqrt{{d_1}^2 + {d_k}^2} &  0  & \cdots & \cdots & x-(n_k - 1))d_k\sqrt{2}
    \end{vmatrix}.
    \]
Expanding the above matrix along the first row, we get the characteristic polynomial
\begin{align*}
    \chi(S(\Gamma), x) = & \prod_{i=1}^{k} \left(x + d_i\sqrt{2} \right)^{n_i - 1} 
      \left( \prod_{i=1}^{k} \{x - (n_{i}-1)d_i\sqrt{2} \} - n_{1}n_{2}({d_1}^2 + {d_2}^2) \prod_{i=3}^{k} \{x - (n_{i}-1)d_i\sqrt{2} \} \right. \nonumber \\
    & \left. - \cdots - n_{1}n_{k} ({d_1}^2 + {d_k}^2) \prod_{i=2}^{k-1} \{x - (n_{i}-1)d_i\sqrt{2} \} \right).
\end{align*}
\end{proof} 
\end{corollary}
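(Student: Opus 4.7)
The plan is to derive this directly from Theorem \ref{Characteristic polynomial}, using that the underlying "compressed" graph $\Im$ for the generalized join $K_{1,k-1}[K_{n_1},\ldots,K_{n_k}]$ is precisely the star $K_{1,k-1}$. So first I would identify the degrees: in $\Gamma$, every vertex in the central block $K_{n_1}$ is adjacent to every other vertex of $\Gamma$, giving $d_1=n-1$; a vertex in $K_{n_i}$ for $i\geq 2$ is adjacent only to the other $n_i-1$ vertices of its own block and to all $n_1$ vertices of the central block, giving $d_i=n_1+n_i-1$. This already yields the factor $\prod_{i=1}^{k}(x+\sqrt{2}\,d_i)^{n_i-1}$ in the characteristic polynomial.

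Next I would write down the matrix $N$ from Theorem \ref{Characteristic polynomial} explicitly. Because blocks $C_i$ and $C_j$ with $i,j\geq 2$ and $i\neq j$ are not adjacent in $\Im$, all such off-diagonal entries of $N$ vanish. Thus $N$ has the classical \emph{arrowhead} / bordered-diagonal shape: nonzero entries appear only on the diagonal, in the first row, and in the first column. The remaining task is to compute $\chi(N,x)=\det(xI-N)$ for this arrowhead matrix, which is the main (though routine) computational step.

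For this determinant I would expand along the first row. The leading term gives $\bigl(x-(n_1-1)d_1\sqrt{2}\bigr)\prod_{i=2}^{k}\bigl(x-(n_i-1)d_i\sqrt{2}\bigr)$, since the lower-right block is diagonal. For each $j\geq 2$, the $(1,j)$-cofactor is handled by noting that after deleting row $1$ and column $j$ from $xI-N$, the remaining minor contains the row originally indexed by $j$, which now has a single nonzero entry $-n_1\sqrt{d_1^2+d_j^2}$ in the first column; expanding along that row reduces the minor to a diagonal determinant over $\{2,\ldots,k\}\setminus\{j\}$. After tracking the two sign factors $(-1)^{1+j}$ from the cofactor and $(-1)^{j}$ from the subsequent expansion (these cancel up to an overall minus) and multiplying the two matching off-diagonal entries $n_j\sqrt{d_1^2+d_j^2}$ and $n_1\sqrt{d_1^2+d_j^2}$, I pick up exactly $-n_1 n_j(d_1^2+d_j^2)\prod_{i\in\{2,\ldots,k\}\setminus\{j\}}\bigl(x-(n_i-1)d_i\sqrt{2}\bigr)$. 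Summing over $j=2,\ldots,k$ and multiplying by the block factor from step one produces the stated formula. The main (only) obstacle is keeping the cofactor signs and index sets consistent in the expansion; everything else is immediate from Theorem \ref{Characteristic polynomial}.
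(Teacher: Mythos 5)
Your proposal is correct and follows essentially the same route as the paper: identify the degrees $d_1=n-1$ and $d_i=n_1+n_i-1$, invoke Theorem \ref{Characteristic polynomial} to reduce to the arrowhead matrix $N$, and expand $\det(xI-N)$ along the first row. You actually supply more detail on the cofactor sign bookkeeping than the paper does, and your sign analysis correctly yields the terms $-n_1n_j(d_1^2+d_j^2)\prod_{i\neq 1,j}\bigl(x-(n_i-1)d_i\sqrt{2}\bigr)$.
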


In view of Corollary \ref{Generalize join}, we have the following result.

\begin{corollary} \label{generalize join Spectrum}
Let $\Gamma$ be the generalized join graph $K_{1,k-1}[K_l, K_{m_2}, K_{m_3},  \ldots, K_{m_{k}}]$ with $m_i = m$ for all $2 \leq i \leq k$ and $n=l+(k-1)m$. Then 
\[
\sigma(S(\Gamma)) = \displaystyle \begin{pmatrix}
-(n-1)\sqrt{2} & -(l+m-1)\sqrt{2} & (m-1)(l+m-1)\sqrt{2} & y_1 & y_2 \\
 l-1 & mk-k-m+1 & k-2 & 1 & 1 \\
 \end{pmatrix}, 
\]
where $y_1= \frac{1}{2}\left[(l-1)d_1\sqrt{2}+(m-1)d_2\sqrt{2} +\sqrt{[(l-1)d_1\sqrt{2}-(m-1)d_2\sqrt{2}]^2+4lm(k-1)({d_1}^2 + {d_2}^2)}\right]$ and $y_2= \frac{1}{2}\left[(l-1)d_1\sqrt{2}+(m-1)d_2\sqrt{2} -\sqrt{[(l-1)d_1\sqrt{2}-(m-1)d_2\sqrt{2}]^2+4lm(k-1)({d_1}^2 + {d_2}^2)}\right],$ with $d_1 = n-1$ and $d_2= m+l -1$.
\end{corollary}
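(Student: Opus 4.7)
The plan is to specialize Corollary \ref{Generalize join} to the symmetric situation $n_1 = l$, $n_2 = \cdots = n_k = m$, for which the degrees simplify to $d_1 = n - 1$ and $d_2 = d_3 = \cdots = d_k = l + m - 1$, and then group the resulting factors, reading off the eigenvalues from each piece.

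First, the product $\prod_{i=1}^{k}(x + \sqrt{2}\, d_i)^{n_i - 1}$ appearing in Corollary \ref{Generalize join} splits as $(x + \sqrt{2}(n-1))^{l-1}$ together with $k-1$ equal factors $(x + \sqrt{2}(l+m-1))^{m-1}$. This already accounts for the eigenvalue $-(n-1)\sqrt{2}$ with multiplicity $l-1$ and the eigenvalue $-(l+m-1)\sqrt{2}$ with multiplicity $(k-1)(m-1) = km - k - m + 1$.

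Next, I would exploit the symmetry of the second factor in Corollary \ref{Generalize join}. Because every index $j \geq 2$ contributes the same summand $lm(d_1^2 + d_2^2)\bigl(x - (m-1)\sqrt{2}(l+m-1)\bigr)^{k-2}$, the bracket collapses to
\begin{equation*}
\bigl(x - (m-1)\sqrt{2}(l+m-1)\bigr)^{k-2}\Bigl[\bigl(x - (l-1)\sqrt{2}(n-1)\bigr)\bigl(x - (m-1)\sqrt{2}(l+m-1)\bigr) - (k-1)lm(d_1^2 + d_2^2)\Bigr].
\end{equation*}
The $(k-2)$-fold factor yields the eigenvalue $(m-1)(l+m-1)\sqrt{2}$ with multiplicity $k-2$, and the remaining quadratic $Q(x)$ in the bracket has roots $y_1$ and $y_2$.

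Finally, I would verify the stated form of $y_1$ and $y_2$ via the quadratic formula. The sum of the roots of $Q$ is $(l-1)d_1\sqrt{2} + (m-1)d_2\sqrt{2}$, and expanding the discriminant shows the cross term $4(l-1)(m-1)d_1 d_2$ coming from the square of the sum cancels against $-8(l-1)(m-1)d_1 d_2$ coming from $-4$ times the constant of $Q$, leaving
\begin{equation*}
\bigl[(l-1)d_1\sqrt{2} - (m-1)d_2\sqrt{2}\bigr]^2 + 4lm(k-1)(d_1^2 + d_2^2),
\end{equation*}
exactly as required. A multiplicity count $(l-1) + (km - k - m + 1) + (k-2) + 1 + 1 = n$ confirms that no eigenvalue is missing. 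The only delicate step is the discriminant manipulation; it is routine but has to be carried out carefully so that the mixed terms cancel cleanly.
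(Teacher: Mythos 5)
Your proposal is correct and follows essentially the same route as the paper: both specialize Corollary \ref{Generalize join} to the case $n_1=l$, $n_2=\cdots=n_k=m$, pull out the factor $\bigl(x-(m-1)d_2\sqrt{2}\bigr)^{k-2}$ from the collapsed bracket, and obtain $y_1,y_2$ as the roots of the remaining quadratic. Your discriminant check and multiplicity count are consistent with the paper's computation.
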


\begin{proof} Suppose that $\{v^{1}_{1},v^{1}_{2}, \ldots, v^{1}_{l},v^{2}_{1},\ldots, v^{2}_{m_2}, \ldots, v^{k}_{1},\ldots,v^{k}_{m_k} \}$ be the vertex labeling of $\Gamma$, where $v^{i}_{j}$ are the vertices of $K_{m_i}$ with $1 \leq j \leq m_i$ and $2 \leq i \leq k$; for $1 \leq j \leq l$, $v^1_j \in V(K_l)$.  By Corollary \ref{Generalize join}, the characteristic polynomial of Sombor matrix is given by $\chi(S(\Gamma), x)=
(x + d_1\sqrt{2})^{l-1}(x + d_2\sqrt{2})^{(m-1)(k-1)} ~\times$\\
$\left[(x-(l-1)d_1\sqrt{2})(x-(m-1)d_2\sqrt{2})^{k-1} - lm(k-1)({d_1}^2 + {d_2}^2)(x-(m-1)d_2\sqrt{2})^{k-2}\right]$
\begin{align*}
& =(x + d_1\sqrt{2})^{l - 1}(x + d_2\sqrt{2})^{(m-1)(k-1)}(x-(m-1)d_2\sqrt{2})^{k-2}\left[(x-(l-1)d_1\sqrt{2})(x-(m-1)d_2\sqrt{2})- lm(k-1)({d_1}^2 + {d_2}^2)\right]\\
&=(x + d_1\sqrt{2})^{l - 1}(x + d_2\sqrt{2})^{(m-1)(k-1)}(x-(m-1)d_2\sqrt{2})^{k-2} (x-y_1)(x-y_2),
\end{align*}
where $y_1= \frac{1}{2}\left[(l-1)d_1\sqrt{2}+(m-1)d_2\sqrt{2} +\sqrt{[(l-1)d_1\sqrt{2}-(m-1)d_2\sqrt{2}]^2+4lm(k-1)({d_1}^2 + {d_2}^2)}\right]$ and \\
$y_2= \frac{1}{2}\left[(l-1)d_1\sqrt{2}+(m-1)d_2\sqrt{2} -\sqrt{[(l-1)d_1\sqrt{2}-(m-1)d_2\sqrt{2}]^2+4lm(k-1)({d_1}^2 + {d_2}^2)}\right].$
\end{proof}

The notation $G$ for a group and $e$ for its identity element are fixed throughout the paper.

\section{Super commuting graph}
The study of commuting graphs associated with groups is an important area of research in algebra and graph theory. It was introduced by Brauer and Fowler \cite{a.1955-Brauer-group} because of commuting graph provides valuable insights into the structural properties of a group by examining the relationships between its elements. Later, many authors studied the graph theoretic and algebraic properties of commuting graph $\Delta(G)$ (see \cite{a.Araujo2015symmetricinverse,a.Dolvazn2017,a.Shitov2018}). In this section, we discuss the Sombor spectrum of $\Delta(G), \Delta^o(G)$ and $\Delta^c(G)$, where $G \in \{D_{2n}, Q_{4n}, SD_{8n}\}$ into three subsections.

\subsection{Commuting Graph}
In this section, we discuss the Sombor spectrum of the commuting graph $\Delta(G)$, for the group $G= D_{2n}, Q_{4n}$ and $SD_{8n}.$

\begin{corollary} \label{commuting-D_2n}
Let $\Delta(D_{2n})$ be the commuting graph of the dihedral group $D_{2n}$.  
\begin{itemize}
    \item [(i)] If $n$ is odd, then the Sombor spectrum of $\Delta(D_{2n})$ is 
    \[\displaystyle \begin{pmatrix}
-(n-1)\sqrt{2} &  0    \\
 n-2 & n-1   \\
\end{pmatrix},
\]
\noindent and the remaining eigenvalues are the roots of the following polynomial\\
$x^3 +   \sqrt{2}(3n -n^2 - 2)x^2 + (15n^2-9n^3 -10n + 2)x + \sqrt{2}(4n^5  -16n^4 + 22n^3 -14n^2 + 4n)$.
\vspace{0.2cm}
 \item[(ii)] If $n$ is even , then the Sombor spectrum of $\Delta(D_{2n})$ is
     \[\displaystyle \begin{pmatrix}
-(2n-1)\sqrt{2} & -(n-1)\sqrt{2} & -3\sqrt{2}\\
1 &  n-3 & \frac{n}{2}\\
\end{pmatrix},\]
\noindent and the remaining eigenvalues are the roots of the following polynomial\\ 
$\Big[ x - (2n-1)\sqrt{2} \Big] \Big[ x - (n-1)(n-3)\sqrt{2} \Big] \Big[ x - 3\sqrt{2} \Big]^{\frac{n}{2}}
 - 2(n-2)(5n^2 -6n +2) \Big[ x -3\sqrt{2}\Big]^{\frac{n}{2}}- 2n (7n^2 -4n +10)\times$\\
 \\
$ \Big[ x - (n-3)(n-1)\sqrt{2}\Big]\Big[ x - 3\sqrt{2} \Big]^{\frac{n}{2}-1}$.
\end{itemize}
\end{corollary}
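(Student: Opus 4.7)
The plan is to recognize $\Delta(D_{2n})$ as a generalized join of complete graphs determined by the centralizer structure of $D_{2n}$ and then apply Corollary \ref{Generalize join} directly. The centralizer data in both parities, together with the equivalence between common-neighborhood blocks and the leaves of a generalized join, will convert the spectral problem into a mechanical substitution.

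For odd $n$, I would first verify from the relation $ba = a^{-1}b$ that $Z(D_{2n}) = \{e\}$, $C(a^j) = \langle a \rangle$ for $1 \le j \le n-1$, and $C(a^ib) = \{e, a^ib\}$. Consequently, in $\Delta(D_{2n})$ the subgroup $\langle a \rangle$ induces $K_n$, reflections are pairwise non-adjacent, and every reflection is pendant to $e$. This gives the decomposition
\[
\Delta(D_{2n}) \;\cong\; K_{1,\,n+1}\!\left[K_1,\, K_{n-1},\, \underbrace{K_1, \ldots, K_1}_{n}\right],
\]
with $d_1 = 2n-1$, $d_2 = n-1$, and $d_3 = \cdots = d_{n+2} = 1$. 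Feeding this into Corollary \ref{Generalize join}, the multiplicity factor $\prod_i (x + d_i\sqrt{2})^{n_i - 1}$ collapses to $(x + (n-1)\sqrt{2})^{n-2}$, producing $-(n-1)\sqrt{2}$ with multiplicity $n - 2$; the residual determinantal factor carries an overall $x^{n-1}$ from the $n$ singleton reflection-leaves (equivalently, Lemma \ref{multiplicity}(i) applied to the independent set of reflections with common neighborhood $\{e\}$), giving the zero eigenvalue with multiplicity $n - 1$. Expanding the remaining $3\times 3$ minor and simplifying with $(2n-1)^2 + (n-1)^2 = 5n^2 - 6n + 2$ and $(2n-1)^2 + 1 = 4n^2 - 4n + 2$ should then reproduce the displayed cubic.

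For even $n$ one has $Z(D_{2n}) = \{e, a^{n/2}\}$ and $C(a^ib) = \{e, a^{n/2}, a^ib, a^{i+n/2}b\}$, so the reflections split into $n/2$ commuting pairs, each forming a $K_2$ whose only external neighbors in $\Delta(D_{2n})$ are $e$ and $a^{n/2}$. The join decomposition becomes
\[
\Delta(D_{2n}) \;\cong\; K_{1,\,n/2+1}\!\left[K_2,\, K_{n-2},\, \underbrace{K_2, \ldots, K_2}_{n/2}\right],
\]
with $d_1 = 2n-1$, $d_2 = n-1$, $d_3 = \cdots = d_{n/2+2} = 3$. Corollary \ref{Generalize join} now contributes the multiplicity product
\[
(x + (2n-1)\sqrt{2})(x + (n-1)\sqrt{2})^{n-3}(x + 3\sqrt{2})^{n/2},
\]
recovering the three explicit eigenvalues with multiplicities $1$, $n-3$, and $n/2$. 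The remaining $n/2 + 2$ eigenvalues are the roots of the determinantal factor; substituting the degree data and using $(2n-1)^2 + 9 = 4n^2 - 4n + 10$ should yield the stated polynomial.

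The main obstacle is algebraic bookkeeping in the even case: one must correctly assemble the $(n/2+2)\times(n/2+2)$ arrowhead quotient matrix and collect the $n/2$ identical contributions from the reflection-pair leaves when expanding the determinant along the first row. The centralizer analysis itself is short and elementary, so once the join decomposition is established in each parity the rest is a direct application of Corollary \ref{Generalize join}.
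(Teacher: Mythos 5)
Your proposal is correct and follows essentially the same route as the paper: both identify $\Delta(D_{2n})$ as the generalized join $K_{1,n+1}[K_1,K_{n-1},K_1,\ldots,K_1]$ (odd $n$) and $K_{1,n/2+1}[K_2,K_{n-2},K_2,\ldots,K_2]$ (even $n$) with the same degree data, and then apply Corollary \ref{Generalize join} together with Lemma \ref{multiplicity} to read off the repeated eigenvalues and the residual determinantal factor. The only cosmetic difference is that you derive the centralizer structure directly where the paper cites it from the literature.
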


\begin{proof}
(i) For odd $n$, we have
$\Delta(D_{2n})= K_{1,2}[K_1,K_{n-1},\overline{K}_n]$ (cf. \cite{arunkumar2024graphs}). Now consider $A_1 = \{e\}, A_2 = \langle a \rangle \setminus \{e\}, C_i = \{a^ib\}$ with $0 \leq i \leq n-1$. Note that $|A_1| = 1$, $|A_2| = n-1$ and $|C_i| = 1$ for all $0 \leq i \leq n-1$. In view of Corollary \ref{Generalize join}, we get\\

$\chi(S(\Delta(D_{2n})), x) = \Big[x + (n-1)\sqrt{2}\Big]^{n - 2} \times  \Bigg\{x^{n+1}\Big[x-(n-1)(n-2)\sqrt{2}\Big]- (n-1)\Big[(2n-1)^2 + (n-1)^2\Big]x^n$

$-\Big[(2n-1)^2 + 1\Big]\Big[x - \sqrt{2}(n-2)(n-1)\Big]x^{n-1} - \cdots - \Big[(2n-1)^2 + 1\Big]\Big[x - \sqrt{2}(n-2)(n-1)\Big]x^{n-1} \Bigg\}$

\vspace{0.3cm}

\noindent$=\Big[x + (n-1)\sqrt{2}\Big]^{n - 2} \times \Bigg\{x^{n+1}\Big[x-(n-1)(n-2)\sqrt{2}\Big]- (n-1)\Big[(2n-1)^2 + (n-1)^2\Big]x^n$ 
$-n\Big[(2n-1)^2 + 1\Big]\times$\\ $\Big[x - (n-2)(n-1)\sqrt{2}\Big]x^{n-1} \Bigg\}$

\vspace{0.3cm}

\noindent$= \Bigg\{ x^2\Big[x-(n-1)(n-2)\sqrt{2}\Big]- (n-1)\Big[(2n-1)^2 + (n-1)^2]x-n\Big[(2n-1)^2 + 1\Big]\Big[x - \sqrt{2}(n-2)(n-1)\Big] \Bigg\} \times$

$\Big[x + (n-1)\sqrt{2}\Big]^{n - 2} x^{n-1}$
\vspace{0.3cm}

\noindent$= \Big[x + (n-1)\sqrt{2}\Big]^{n - 2} x^{n-1} \Bigg\{x^3 +   \sqrt{2}(3n -n^2 - 2)x^2 + (15n^2-9n^3 -10n + 2)x + \sqrt{2}(4n^5  -16n^4 \textcolor{red}{+}22n^3 -14n^2 + 4n)\Bigg\}
$.\\
Thus, the Sombor spectrum of $\Delta(D_{2n})$  is 
    \[\displaystyle \begin{pmatrix}
-(n-1)\sqrt{2} & 0 \\
 n-2 & n-1 \\
\end{pmatrix},
\]
and the remaining eigenvalues are the roots of the following polynomial\\ 
$x^3 +   \sqrt{2}(3n -n^2 - 2)x^2 + (15n^2-9n^3 -10n + 2)x + \sqrt{2}(4n^5  -16n^4 +22n^3 -14n^2 + 4n)$. 

\vspace{0.3cm}
\noindent(ii) As $n$ is even, the $\Delta(D_{2n})= K_{1,\frac{n}{2}+1}[K_2,K_{n-2}, \underbrace{K_2, K_2, \ldots, K_2}_{\frac{n}{2}-{\rm times}}]$ (cf. \cite{arunkumar2024graphs}). Now consider $A_1 = \{e, a^n\}, A_2 = \langle a \rangle \setminus \{e, a^n\}, C_i = \{a^ib, a^{i + \frac{n}{2}}b\}$ with $1 \leq i \leq \frac{n}{2}$. Note that $|A_1| = 2$, $|A_2| = n-2$ and $|C_i| = 2$ for all $1 \leq i \leq \frac{n}{2}$. In view of Corollary \ref{Generalize join}, we get\\ $\chi(S(\Delta(D_{2n})), x) =$
$ \Big[x + (2n-1)\sqrt{2}\Big] \Big[x + (n-1)\sqrt{2}\Big]^{n - 3} \Big[x + 3\sqrt{2}\Big]^{\frac{n}{2}} \times \Bigg\{\Big[x - (2n-1)\sqrt{2}\Big] \Big[x - (n-1)(n-3)\sqrt{2}\Big] \Big[x - 3\sqrt{2}\Big]^{\frac{n}{2}}$ $- 2(n-2)\Big[ (2n-1)^2 + (n-1)^2\Big] \Big[ x -3\sqrt{2}\Big]^{\frac{n}{2}}
- 2n \Big[ (2n-1)^2 + 9\Big]  \Big[ x - (n-3)(n-1)\sqrt{2}\Big] \Big[ x - 3\sqrt{2}\Big]^{\frac{n}{2}-1} \Bigg\}$.

Thus, the Sombor spectrum is
     \[\displaystyle \begin{pmatrix}
-(2n-1)\sqrt{2} & -(n-1)\sqrt{2} & -3\sqrt{2} \\
1 &  n-3 & \frac{n}{2}\\
\end{pmatrix},\]
and the remaining eigenvalues are the roots of the following polynomial\\
$\Big[x - (2n-1)\sqrt{2}\Big] \Big[x - (n-1)(n-3)\sqrt{2}\Big] \Big[x - 3\sqrt{2}\Big]^{\frac{n}{2}}$ $- 2(n-2)(5n^2 -6n + 2) \Big[ x -3\sqrt{2}\Big]^{\frac{n}{2}}
- 4n ( 2n^2 -2n +5) \Big[ x - (n-3)(n-1)\sqrt{2}\Big] \Big[ x - 3\sqrt{2}\Big]^{\frac{n}{2}-1}$.
\end{proof}

\begin{corollary} \label{commuting-Q_4n}
Let $\Delta(Q_{4n})$ be the commuting graph of the generalized quaternion group. Then the Sombor spectrum of $\Delta(Q_{4n})$ is given below 

    \[\displaystyle \begin{pmatrix}
-(4n-1)\sqrt{2} & -(2n-1)\sqrt{2} & -3\sqrt{2} \\
 1 & 2n-3 & n\\
\end{pmatrix},
\]
and the  remaining eigenvalues are the roots of the following polynomial\\ 
$\Big[x-(4n-1)\sqrt{2} \Big]\Big[ x - (2n-1)(2n-3)\sqrt{2} \Big] \Big[ x -3\sqrt{2}\Big]^n  
 - 8(n-1)(10n^2 -6n +1) \Big[ x -3\sqrt{2}\Big]^n
- 8n(8n^2 -4n +5)\times$\\
\\
$\Big[ x - (2n-1)(2n-3)\sqrt{2} \Big]\Big[ x -3\sqrt{2}\Big]^{n-1}$.
\end{corollary}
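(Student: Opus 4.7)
The plan is to mimic the argument used for $\Delta(D_{2n})$ with $n$ even in Corollary \ref{commuting-D_2n}(ii), replacing the dihedral decomposition by the analogous decomposition of $Q_{4n}$ coming from (\ref{eq(2)}). First I would identify the structure of $\Delta(Q_{4n})$: the center $Z(Q_{4n}) = \{e, a^n\}$ commutes with everything, $\langle a \rangle$ is abelian, and each $\langle a^ib \rangle = \{e, a^ib, a^n, a^{n+i}b\}$ is cyclic (in particular abelian). Consequently, two elements commute in $Q_{4n}$ iff they both lie in $\langle a\rangle$ or both lie in a common $\langle a^ib\rangle$. Partitioning the vertices as $A_1 = \{e, a^n\}$, $A_2 = \langle a\rangle\setminus A_1$, and $C_i = \{a^ib, a^{n+i}b\}$ for $0 \le i \le n-1$, one obtains
\[
\Delta(Q_{4n}) \;\cong\; K_{1,n+1}\bigl[K_2,\,K_{2n-2},\,\underbrace{K_2, K_2, \ldots, K_2}_{n\text{-times}}\bigr].
\]

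Next I would compute vertex degrees in this join: vertices of $A_1$ have degree $4n-1$, vertices of $A_2$ have degree $2n-1$, and vertices of each $C_i$ have degree $3$. An application of Lemma \ref{multiplicity}(ii) to the three cliques $A_1$, $A_2$, $C_i$ (noting that inside each such clique all vertices share the same outside neighbourhood) then yields the Sombor eigenvalues $-(4n-1)\sqrt{2}$, $-(2n-1)\sqrt{2}$, and $-3\sqrt{2}$ with multiplicities $1$, $2n-3$, and $n$ respectively, matching the displayed spectrum.

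To obtain the remaining eigenvalues, I would apply Corollary \ref{Generalize join} with parameters $k = n+2$, $(n_1, d_1) = (2, 4n-1)$, $(n_2, d_2) = (2n-2, 2n-1)$, and $(n_i, d_i) = (2, 3)$ for $3 \le i \le n+2$. The product $\prod_{i=1}^{n+2} \bigl(x - (n_i-1)d_i\sqrt{2}\bigr)$ collapses to
\[
\bigl[x - (4n-1)\sqrt{2}\bigr]\bigl[x - (2n-1)(2n-3)\sqrt{2}\bigr]\bigl[x - 3\sqrt{2}\bigr]^{n}.
\]
The correction term for the pair $(1,2)$ contributes $n_1 n_2 (d_1^2 + d_2^2) = 2(2n-2)\bigl[(4n-1)^2 + (2n-1)^2\bigr]$, and because each of the $n$ terms coming from pairs $(1,i)$ with $i \geq 3$ contributes identically $n_1 n_i (d_1^2 + d_i^2) = 4\bigl[(4n-1)^2 + 9\bigr]$, summing them introduces a factor of $n$.

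The main (and only) obstacle is the bookkeeping of the arithmetic simplifications
\[
(4n-1)^2 + (2n-1)^2 = 2(10n^2 - 6n + 1), \qquad (4n-1)^2 + 9 = 2(8n^2 - 4n + 5),
\]
which convert the generic expression in Corollary \ref{Generalize join} into the precise polynomial
\[
\bigl[x-(4n-1)\sqrt{2}\bigr]\bigl[x-(2n-1)(2n-3)\sqrt{2}\bigr]\bigl[x-3\sqrt{2}\bigr]^{n} - 8(n-1)(10n^2-6n+1)\bigl[x-3\sqrt{2}\bigr]^{n}
\]
\[
-\; 8n(8n^2-4n+5)\bigl[x-(2n-1)(2n-3)\sqrt{2}\bigr]\bigl[x-3\sqrt{2}\bigr]^{n-1},
\]
whose roots are the remaining Sombor eigenvalues, completing the proof.
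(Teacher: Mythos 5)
Your proposal is correct and follows essentially the same route as the paper: it uses the decomposition $\Delta(Q_{4n})\cong K_{1,n+1}[K_2,K_{2n-2},K_2,\ldots,K_2]$ (which the paper simply cites from the literature, whereas you rederive it from the commuting structure of $Q_{4n}$) and then applies Corollary \ref{Generalize join} with the same degree data, and all of your arithmetic simplifications check out.
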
 

\begin{proof}The commuting graph of the generalized quaternion group $Q_{4n}$ (see \cite{arunkumar2024graphs}) is
 \[
\Delta(Q_{4n})=
    K_{1,n+1}[K_2,K_{2n-2},\underbrace{K_{2},\ldots,K_{2}}_{n-{\rm times}}].
\]
 By Corollary \ref{Generalize join}, we get\\
$
\chi(S(\Delta(Q_{4n})), x) =  \Big[x + (2n-1)\sqrt{2}\Big]^{2n-3} \Big[x + (4n-1) \sqrt{2} \Big] \Big[x + 3\sqrt{2}\Big]^n \times
  \Bigg\{ \Big[x-(4n-1)\sqrt{2} \Big]\Big[ x - (2n-1)(2n-3)\sqrt{2} \Big] \Big[ x -3\sqrt{2}\Big]^n 
 -2(2n-2) \Big[ (4n-1)^2 + (2n-1)^2 \Big]\Big[ x -3\sqrt{2}\Big]^n 
 -4n \Big[ (4n-1)^2 + 3^2 \Big] \Big[ x - (2n-1)(2n-3)\sqrt{2} \Big] \Big[ x -3\sqrt{2}\Big]^{n-1} \Bigg\} $.\\
 It follows that $-(4n-1)\sqrt{2}$, $-(2n-1)\sqrt{2}$ and $-3\sqrt{2}$ are the Sombor eigenvalues of ${\Delta}(Q_{4n})$ with its multiplicities $1$, $2n-3$ and $n$, respectively and  the remaining eigenvalues are the roots of the polynomial \\
$\Big[x-(4n-1)\sqrt{2} \Big]\Big[ x - (2n-1)(2n-3)\sqrt{2} \Big] \Big[ x -3\sqrt{2}\Big]^n  
 - 8(n-1)(10n^2 -6n +1) \Big[ x -3\sqrt{2}\Big]^n
- 8n(8n^2 -4n +5)\times$\\$\Big[ x - (2n-1)(2n-3)\sqrt{2} \Big]\Big[ x -3\sqrt{2}\Big]^{n-1}$.
\end{proof}

\begin{corollary} \label{commuting-SD_8n}
Let $G$ be semidihedral group $SD_{8n}$.
\begin{itemize}
    \item [(i)] For odd $n$, the Sombor spectrum of $\Delta(SD_{8n})$ is
    \[\displaystyle \begin{pmatrix}
-(8n-1)\sqrt{2} & -(4n-1)\sqrt{2} & -7\sqrt{2} & 21\sqrt{2}\\
 3 & 4n-5 & 3n & n -1\\
\end{pmatrix},
\]
\noindent and the remaining eigenvalues are the roots of the following polynomial 

  \noindent $ \Big[x-3(8n-1)\sqrt{2} \Big]\Big[ x - (4n-5)(4n-1)\sqrt{2} \Big] \Big[ x - 21\sqrt{2}\Big]
 -32(n-1) (40n^2-12n + 1) \Big[ x - 21\sqrt{2}\Big]$\\
\\ 
 $-32n(32n^2-8n +25) \Big[ x - (4n-1)(4n-5)\sqrt{2} \Big]$.

\vspace{0.2cm}
 \item[(ii)] For even $n$, the Sombor spectrum of $\Delta(SD_{8n})$ is 
     \[\displaystyle \begin{pmatrix}
-(8n-1)\sqrt{2} & -(4n-1)\sqrt{2} & -3\sqrt{2} & 3\sqrt{2}\\
1 &  4n-3 & 2n & 2n-1 \\
\end{pmatrix},\]
\noindent and remaining eigenvalues are the roots of the following polynomial\\ 
$ \Big[x-(8n-1)\sqrt{2} \Big] \Big[ x - (4n-1)(4n-3)\sqrt{2} \Big] \Big[ x -3\sqrt{2}\Big]
 - 8(2n-1)  (40n^2-12n+1 ) \Big[ x -3\sqrt{2}\Big]$\\
\\
$- 16n (32n^2-12n + 1) \Big[ x - (4n-1)(4n-3)\sqrt{2} \Big]$.  

\end{itemize}
\end{corollary}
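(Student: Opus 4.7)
The plan is to follow the template set by Corollaries \ref{commuting-D_2n} and \ref{commuting-Q_4n}: identify $\Delta(SD_{8n})$ as a generalized join of complete graphs over a star, apply Corollary \ref{Generalize join}, and then reduce the resulting equitable quotient matrix to a $3\times 3$ block by exploiting the symmetry among the equal-sized leaves of the star.

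The structural preparation splits according to the parity of $n$. Using the relation $ba=a^{2n-1}b$, one checks that a reflection $a^{i}b$ commutes with a rotation $a^{k}$ only when $a^{k}\in Z(SD_{8n})$; that two same-parity-indexed reflections $a^{2i}b,\, a^{2i'}b$ (respectively $a^{2j+1}b,\, a^{2j'+1}b$) commute iff $i\equiv i'\pmod{n}$; and that an even-indexed and an odd-indexed reflection commute iff $(n-1)(2i-2j-1)\equiv 0\pmod{2n}$, a condition with no solutions when $n$ is even but with exactly two solutions $i\equiv j+(n+1)/2\pmod{n}$ when $n$ is odd. Since $Z(SD_{8n})=\{e,a^{n},a^{2n},a^{3n}\}$ when $n$ is odd and $\{e,a^{2n}\}$ when $n$ is even, these three facts yield
\[
\Delta(SD_{8n}) \cong K_{1,n+1}\bigl[K_{4},\,K_{4n-4},\,\underbrace{K_{4},\ldots,K_{4}}_{n}\bigr]
\]
when $n$ is odd, and
\[
\Delta(SD_{8n}) \cong K_{1,2n+1}\bigl[K_{2},\,K_{4n-2},\,\underbrace{K_{2},\ldots,K_{2}}_{2n}\bigr]
\]
when $n$ is even.

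Invoking Corollary \ref{Generalize join} with the data $(n_{1},d_{1})=(4,8n-1)$, $(n_{2},d_{2})=(4n-4,4n-1)$, $(n_{j},d_{j})=(4,7)$ for $j\ge 3$ in the odd case (and $(2,8n-1)$, $(4n-2,4n-1)$, $(2,3)$ in the even case) immediately extracts the four bulk eigenvalues $-(8n-1)\sqrt{2}$, $-(4n-1)\sqrt{2}$, and $-7\sqrt{2}$ or $-3\sqrt{2}$ with the multiplicities claimed in the statement, and all remaining eigenvalues of $S(\Delta(SD_{8n}))$ coincide with the eigenvalues of the equitable quotient matrix $N$ of size $k=n+2$ (respectively $2n+2$).

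To finish, observe that the last $k-2$ rows of $N$ are pairwise equal up to a permutation of the last $k-2$ columns, since the leaf blocks indexed $3,\ldots,k$ have identical $(n_{j},d_{j})$ and each is adjacent only to block $1$ in the star. Consequently the vectors $e_{i}-e_{i+1}$ for $3\le i\le k-1$ are eigenvectors of $N$ with eigenvalue $(n_{j}-1)d_{j}\sqrt{2}$, equal to $21\sqrt{2}$ for odd $n$ and $3\sqrt{2}$ for even $n$, accounting for the claimed multiplicities $n-1$ and $2n-1$. The complementary direction $v=e_{3}+\cdots+e_{k}$, together with $e_{1},e_{2}$, spans a $3$-dimensional $N$-invariant subspace on which $N$ is represented by
\[
\widetilde N =
\begin{pmatrix}
(n_{1}-1)d_{1}\sqrt{2} & n_{2}\sqrt{d_{1}^{2}+d_{2}^{2}} & (k-2)\,n_{j}\sqrt{d_{1}^{2}+d_{j}^{2}}\\
n_{1}\sqrt{d_{1}^{2}+d_{2}^{2}} & (n_{2}-1)d_{2}\sqrt{2} & 0\\
n_{1}\sqrt{d_{1}^{2}+d_{j}^{2}} & 0 & (n_{j}-1)d_{j}\sqrt{2}
\end{pmatrix}.
\]
Expanding $\det(xI-\widetilde N)$ along its third column and substituting the parity-specific constants (using $(8n-1)^{2}+(4n-1)^{2}=2(40n^{2}-12n+1)$ and the corresponding identities for $(8n-1)^{2}+49$ or $(8n-1)^{2}+9$) yields the cubic polynomials stated in (i) and (ii). The principal obstacle is the odd-$n$ structural step, namely verifying that each four-element set $\{a^{2i}b,a^{2(i+n)}b,a^{2j+1}b,a^{2(j+n)+1}b\}$ with $i\equiv j+(n+1)/2\pmod{n}$ really is a clique whose only outside neighborhood is the center; once this is settled, everything afterwards is a routine determinant calculation parallel to the endgames of Corollaries \ref{commuting-D_2n}(ii) and \ref{commuting-Q_4n}.
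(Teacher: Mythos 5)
Your proposal is correct and lands on the same backbone as the paper: identify $\Delta(SD_{8n})$ as $K_{1,n+1}[K_4,K_{4n-4},K_4,\ldots,K_4]$ (odd $n$) or $K_{1,2n+1}[K_2,K_{4n-2},K_2,\ldots,K_2]$ (even $n$) and feed this into Corollary \ref{Generalize join}. The differences are in how you get there and how you finish. The paper simply imports the decomposition of $\Delta(SD_{8n})$ from \cite{dalal2021semidihedral}, whereas you rederive it from the relation $ba=a^{2n-1}b$; your congruence analysis (reflection--rotation commutation forcing the center, and $(n-1)(2i-2j-1)\equiv 0 \pmod{2n}$ governing the mixed reflection pairs, with the two solutions $i\equiv j+(n+1)/2 \pmod n$ when $n$ is odd) is correct and makes the argument self-contained, which is what the paper's citation hides. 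For even $n$ the paper takes a shortcut you do not: it observes $\Delta(SD_{8n})\cong\Delta(Q_{4m})$ with $m=2n$ and reads the answer off Corollary \ref{commuting-Q_4n}, while you run the quotient-matrix computation directly; both are valid, and your explicit eigenvector reduction ($e_i-e_{i+1}$ giving $(n_j-1)d_j\sqrt2$, plus the $3\times3$ block $\widetilde N$) is just a hands-on version of the factorization already packaged in Corollaries \ref{Generalize join} and \ref{generalize join Spectrum}. One caveat: in case (ii) your substitution gives the last coefficient as $8n\big[(8n-1)^2+9\big]=16n(32n^2-8n+5)$, not the $16n(32n^2-12n+1)$ printed in the statement; the printed value appears to be a typo (specializing Corollary \ref{commuting-Q_4n} at $2n$ also yields $16n(32n^2-8n+5)$), so you should not claim your calculation reproduces (ii) verbatim, but the discrepancy is in the statement rather than in your method.
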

\begin{proof}
 The commuting graph of the semidihedral group $SD_{8n}$ is given below (cf. \cite{dalal2021semidihedral})
 \[
\Delta(SD_{8n})= \begin{dcases}
    K_{1,n+1}[K_4,K_{4n-4},\underbrace{K_{4},\ldots,K_{4}}_{n-{\rm times}}], & \text{if $n$ is odd},\\
    K_{1,2n+1}[K_2,K_{4n-2},\underbrace{K_{2},\ldots,K_{2}}_{2n-{\rm times}}], & \text{if $n$ is even}. \\
\end{dcases}
\]

First, we assume that $n$ is odd. 
In view of Corollary \ref{Generalize join}, one can obtain the Sombor spectrum by using  the same arguments used in the Corollaries \ref{commuting-D_2n} and \ref{commuting-Q_4n}. 
 For even $n$,  we have $Com(SD_{8n}) \cong Com(Q_{4m})$ and the Sombor spectrum of  $Com(Q_{4m})$ has been discussed in Corollary \ref{commuting-Q_4n}.
\end{proof}

\subsection{Order Supercommuting Graph}
In this subsection, we discuss the Sombor spectrum of the order supercommuting graph $\Delta^o(G)$, for $G= D_{2n}, Q_{4n}$ and $SD_{8n}$. Dalal \emph{et al.} \cite{spectrumofsupergraphs}, gave the structure of the following graphs:

\[
\Delta^o(D_{2n})= 
\begin{dcases}
    K_{1,2}[K_1,K_{n-1},K_n], \text{ if $n$ is odd}   \\
   K_{2n},   \text{ if $n$ is even}  \\
\end{dcases}
\]
 and
 \[
\Delta^o(Q_{4n})= 
\begin{dcases}
    K_{1,2}[K_2,K_{2n-2},K_{2n}], \text{ if $n$ is odd}   \\
   K_{4n},   \text{ if $n$ is even}.   \\
\end{dcases}
\]

By Corollary \ref{Generalize join}, one can verify the proof of the following Corollaries.

\begin{corollary} \label{OrderSC-D_2n}
Let $\Delta^o(D_{2n})$ be an order supercommuting graph of the dihedral group.  
\begin{itemize}
    \item [(i)] If $n$ is odd, then the Sombor spectrum of $\Delta^o(D_{2n})$ is
    \[\displaystyle \begin{pmatrix}
-(n-1)\sqrt{2} & -n\sqrt{2}\\
 n-2 & n-1 \\
\end{pmatrix},
\]
\noindent and the remaining eigenvalues are the roots of the following polynomial\\ $x \Big[ x - (n-1)(n-2)\sqrt{2}\Big]\Big[ x - n(n-1)\sqrt{2}\Big] - (n-1)(5n^2 -6n +2) \Big[x - n(n-1)\sqrt{2}\Big] - n(5n^2 -4n +1)\times$\\
 \\
 $ \Big[ x - (n-1)(n-2)\sqrt{2}\Big].$ 

\vspace{0.2cm}

 \item[(ii)] If $n$ is even, then the Sombor spectrum of $\Delta^o(D_{2n})$ is
     \[\displaystyle \begin{pmatrix}
-(2n-1)\sqrt{2} & (2n-1)^2\sqrt{2} \\
 2n-1 & 1 \\
\end{pmatrix}.\]
\end{itemize}
\end{corollary}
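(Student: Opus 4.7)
The plan is to treat the two parity cases separately, deriving each Sombor spectrum from the general formula in Corollary \ref{Generalize join} for odd $n$ and from a direct spectral computation on $K_{2n}$ for even $n$. Since the paragraph preceding the corollary has already recalled the structure theorem expressing $\Delta^o(D_{2n})$ as a generalized join (respectively as a complete graph), the proof reduces essentially to an identification of parameters followed by substitution.

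For part (i), I would apply Corollary \ref{Generalize join} to $\Delta^o(D_{2n}) \cong K_{1,2}[K_1, K_{n-1}, K_n]$, so that $k = 3$, $(n_1, n_2, n_3) = (1, n-1, n)$, and the total number of vertices is $2n$. The three vertex-class degrees then read $d_1 = 2n - 1$ (the apex of the star, which is adjacent to everything else), $d_2 = n_1 + n_2 - 1 = n - 1$, and $d_3 = n_1 + n_3 - 1 = n$. Feeding these into the product $\prod_{i=1}^{3} (x + \sqrt{2}\, d_i)^{n_i - 1}$ collapses the factor $(x + (2n-1)\sqrt{2})^{0} = 1$ and leaves $(x + (n-1)\sqrt{2})^{n-2}(x + n\sqrt{2})^{n-1}$, which supplies the two displayed blocks of the spectrum with the stated multiplicities. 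The bracket term of Corollary \ref{Generalize join}, after evaluating $(2n-1)^2 + (n-1)^2 = 5n^2 - 6n + 2$ and $(2n-1)^2 + n^2 = 5n^2 - 4n + 1$, should then match the displayed cubic verbatim, whose three roots account for the remaining Sombor eigenvalues.

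For part (ii), since $n$ is even the graph $\Delta^o(D_{2n})$ is the complete graph $K_{2n}$, so every vertex has degree $2n - 1$ and the Sombor matrix equals $(2n-1)\sqrt{2}\,(J_{2n} - I_{2n})$. The spectrum of $J_{2n} - I_{2n}$ is $2n - 1$ (simple) and $-1$ with multiplicity $2n - 1$; scaling by $(2n-1)\sqrt{2}$ produces exactly the stated eigenvalues $(2n-1)^2\sqrt{2}$ and $-(2n-1)\sqrt{2}$ with multiplicities $1$ and $2n - 1$, respectively.

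No serious obstacle arises in either case. The only mildly delicate point is notational: one must keep the dihedral parameter $n$ clearly distinct from the generic ``total order'' parameter (also called $n$) in the statement of Corollary \ref{Generalize join}, and remember that when $n_1 = 1$ the apex contributes the trivial factor $(x + \sqrt{2} d_1)^0$ to the product, which is why only two distinct negative eigenvalues (rather than three) appear outside the cubic.
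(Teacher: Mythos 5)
Your proposal is correct and follows essentially the same route as the paper, which simply invokes Corollary \ref{Generalize join} together with the stated decompositions $\Delta^o(D_{2n})\cong K_{1,2}[K_1,K_{n-1},K_n]$ (odd $n$) and $\Delta^o(D_{2n})\cong K_{2n}$ (even $n$); your parameter identifications $d_1=2n-1$, $d_2=n-1$, $d_3=n$ and the evaluations $(2n-1)^2+(n-1)^2=5n^2-6n+2$, $(2n-1)^2+n^2=5n^2-4n+1$ reproduce the displayed cubic exactly. The direct computation of the Sombor spectrum of $K_{2n}$ in part (ii) is likewise the intended argument.
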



\begin{corollary} \label{OrderSC-Q_4n} 
Let $\Delta^o(Q_{4n})$ be the order supercommuting graph of the generalized quaternion group.  
\begin{itemize}
\item [(i)] If $n$ is odd, then the Sombor spectrum of $\Delta^o(Q_{4n})$ is 
    \[\displaystyle \begin{pmatrix}
-(4n-1)\sqrt{2} & -(2n-1)\sqrt{2} & -(2n+1)\sqrt{2}\\
 1 & 2n-3 & 2n-1 \\
\end{pmatrix},
\] 
\noindent and the remaining eigenvalues are the roots of the following polynomial\\ 
$\Big[x-(4n-1)\sqrt{2}\Big]\Big[ x - (2n-1)(2n-3)\sqrt{2}\Big] \Big[x - (2n-1)(2n+1)\sqrt{2}\Big]- 8(n-1)( 10n^2-6n + 1)\times$\\
\\
$\Big[x - (2n-1)(2n+1)\sqrt{2}\Big] 
-8n(10n^2 -2n + 1)\Big[x - (2n-1)(2n-3)\sqrt{2}\Big]$.

\vspace{0.2cm}
\item[(ii)] If $n$ is even, then the Sombor spectrum of $\Delta^o(Q_{4n})$ is
     \[\displaystyle \begin{pmatrix}
-(4n-1)\sqrt{2} & (4n-1)^2\sqrt{2} \\
 4n-1 & 1 \\
\end{pmatrix}.\]
\end{itemize}
\end{corollary}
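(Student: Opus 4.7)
The plan is to apply Corollary \ref{Generalize join} (the generalized join formula) directly in the odd case, and to recognize the even case as a complete graph whose Sombor spectrum is immediate. The structural decomposition of $\Delta^o(Q_{4n})$ is given in the excerpt, so the work reduces to a careful bookkeeping exercise.

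First, for odd $n$, I would take the presentation $\Delta^o(Q_{4n}) = K_{1,2}[K_2, K_{2n-2}, K_{2n}]$ and set $k=3$, $n_1=2$, $n_2=2n-2$, $n_3=2n$, so the total order is $4n$. Since here the two non-central parts $K_{n_2}$ and $K_{n_3}$ have \emph{different} sizes, Corollary \ref{generalize join Spectrum} is \emph{not} available and I must fall back to Corollary \ref{Generalize join}. The common degrees are $d_1=4n-1$, $d_2=n_1+n_2-1=2n-1$, $d_3=n_1+n_3-1=2n+1$. Plugging into Corollary \ref{Generalize join} immediately yields the three ``clique eigenvalues'' $-(4n-1)\sqrt{2}$, $-(2n-1)\sqrt{2}$, $-(2n+1)\sqrt{2}$ with multiplicities $n_1-1=1$, $n_2-1=2n-3$, $n_3-1=2n-1$, which are exactly the three multiplicities listed in the statement.

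Next I would expand the $3\times 3$ ``reduced'' factor of the characteristic polynomial supplied by Corollary \ref{Generalize join}. Writing it out,
\begin{align*}
\prod_{i=1}^{3}\!\bigl\{x-(n_i-1)d_i\sqrt{2}\bigr\}
- n_1n_2(d_1^2+d_2^2)\bigl\{x-(n_3-1)d_3\sqrt{2}\bigr\}
- n_1n_3(d_1^2+d_3^2)\bigl\{x-(n_2-1)d_2\sqrt{2}\bigr\}.
\end{align*}
Substituting the values above, and computing
$d_1^2+d_2^2=(4n-1)^2+(2n-1)^2=2(10n^2-6n+1)$,
$d_1^2+d_3^2=(4n-1)^2+(2n+1)^2=2(10n^2-2n+1)$,
$n_1n_2=4(n-1)$, $n_1n_3=4n$,
collapses the expression to the cubic
\[
\bigl[x-(4n-1)\sqrt{2}\bigr]\bigl[x-(2n-1)(2n-3)\sqrt{2}\bigr]\bigl[x-(2n-1)(2n+1)\sqrt{2}\bigr] - 8(n-1)(10n^2-6n+1)\bigl[x-(2n-1)(2n+1)\sqrt{2}\bigr] - 8n(10n^2-2n+1)\bigl[x-(2n-1)(2n-3)\sqrt{2}\bigr],
\]
which is precisely the polynomial in the statement. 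This expansion is the main (and only nontrivial) calculation; the risk is purely arithmetic, and I would double-check the signs of the cross terms, since the pattern in Corollary \ref{Generalize join} always subtracts the $(d_1^2+d_i^2)$ contributions.

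Finally, for even $n$, the excerpt states $\Delta^o(Q_{4n})\cong K_{4n}$. For a complete graph $K_m$ every vertex has degree $m-1$, so $S(K_m)=(m-1)\sqrt{2}\bigl(J_m-I_m\bigr)$; the spectrum of $J_m-I_m$ is $\{m-1,\,-1^{[m-1]}\}$, giving Sombor eigenvalues $(m-1)^2\sqrt{2}$ (simple) and $-(m-1)\sqrt{2}$ with multiplicity $m-1$. Taking $m=4n$ recovers exactly the table in part (ii). No real obstacle arises in this case; I do not anticipate any essential difficulty beyond the routine expansion in the odd case, which mirrors the earlier proofs of Corollaries \ref{commuting-D_2n} and \ref{commuting-Q_4n}.
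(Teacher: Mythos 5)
Your proposal is correct and follows exactly the route the paper takes: the paper's proof of this corollary is just the remark that it follows from Corollary \ref{Generalize join} applied to the decompositions $K_{1,2}[K_2,K_{2n-2},K_{2n}]$ (odd $n$) and $K_{4n}$ (even $n$), and your choice of parameters $n_1=2$, $n_2=2n-2$, $n_3=2n$, $d_1=4n-1$, $d_2=2n-1$, $d_3=2n+1$ together with the arithmetic $d_1^2+d_2^2=2(10n^2-6n+1)$ and $d_1^2+d_3^2=2(10n^2-2n+1)$ reproduces the stated spectrum and cubic exactly. Nothing is missing.
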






\begin{corollary} \label{OrderSC_SD_8n}
Let $\Delta^o(SD_{8n})$ be an order super commuting graph of the semidihedral group $SD_{8n}$. Then the Sombor spectrum  of order super commuting graph $\Delta^o(SD_{8n})$ is  
\[\displaystyle \begin{pmatrix}
-(8n-1)\sqrt{2} & (8n-1)^2\sqrt{2} \\
 8n-1 & 1 \\
\end{pmatrix}.
\]
\end{corollary}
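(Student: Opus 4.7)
The plan is to reduce the problem to computing the Sombor spectrum of a complete graph, by showing that $\Delta^o(SD_{8n})$ is in fact the complete graph $K_{8n}$.

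First, I would verify that every element of $SD_{8n}$ has order dividing $4n$. Elements of $\langle a \rangle$ obviously do. For an element $a^ib \notin \langle a \rangle$, a direct computation using $ba^i = a^{4n-i}b$ (when $i$ is even) and $ba^i = a^{2n-i}b$ (when $i$ is odd) shows that $(a^ib)^2 = e$ for even $i$ and $(a^ib)^2 = a^{2n}$ for odd $i$, giving order $2$ or $4$ respectively. In particular, every order class $[g]_{B_o}$ meets the cyclic subgroup $\langle a \rangle$, since $\langle a \rangle$ (being cyclic of order $4n$) contains an element of every order dividing $4n$.

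Next, I would argue that $\Delta^o(SD_{8n}) \cong K_{8n}$. Take any two distinct vertices $g, h \in SD_{8n}$. If $o(g) = o(h)$, they lie in a common order class and are adjacent by definition. If $o(g) \neq o(h)$, choose $g' \in [g]_{B_o} \cap \langle a \rangle$ and $h' \in [h]_{B_o} \cap \langle a \rangle$ using the previous step. Because $\langle a \rangle$ is abelian, $g'h' = h'g'$, and since $o(g') \neq o(h')$ they are distinct, so $g' \sim h'$ in the commuting graph $\Delta(SD_{8n})$. By the definition of the order super commuting graph this yields $g \sim h$ in $\Delta^o(SD_{8n})$, completing the proof that the graph is complete.

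Finally, I would read off the Sombor spectrum from the complete graph structure: $K_{8n}$ is regular of degree $8n-1$, so $S(K_{8n}) = \sqrt{2}(8n-1)(J_{8n} - I_{8n})$. Since $J_{8n}$ has eigenvalues $8n$ (simple) and $0$ (with multiplicity $8n-1$), the Sombor eigenvalues are $\sqrt{2}(8n-1)^2$ with multiplicity $1$ and $-\sqrt{2}(8n-1)$ with multiplicity $8n-1$, matching the claimed spectrum. There is no serious obstacle in this argument; the only point worth emphasising is that the conclusion is uniform in the parity of $n$, because elements outside $\langle a \rangle$ always have order $2$ or $4$, both of which occur among powers of $a$.
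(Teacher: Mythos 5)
Your proof is correct and follows essentially the same route as the paper: both arguments establish that $\Delta^o(SD_{8n})$ is the complete graph $K_{8n}$ and then read off the Sombor spectrum of $\sqrt{2}(8n-1)(J_{8n}-I_{8n})$. The only cosmetic difference lies in how completeness is verified --- the paper checks that the closed neighbourhood of each order class is all of $SD_{8n}$ using the central element $a^{2n}$ and the element $a^{n}$, and then invokes the clique lemma together with a trace argument, whereas you observe that every order class meets the abelian subgroup $\langle a \rangle$ and compute the eigenvalues of $J_{8n}-I_{8n}$ directly; both amount to the same underlying fact.
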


\begin{proof}
In the semidihedral group $SD_{8n}$, we consider $C_1 = \{a^{2i}b: 0 \leq i \leq 2n-1\} \cup \{ a^{2n}\} $, $ C_2 = \{a^{2i+1}b: 0 \leq i \leq 2n-1\} \cup \{ a^{n}, a^{3n}\} $ and $C_3= \left< a\right> - \{ e, a^{n}, a^{2n}, a^{3n} \}$. Further, note that the order of each vertex belongs to the set $C_1$ is two and $o(x) = 4$ for all $x \in C_2$. Therefore, the subgraphs induced by the vertices belong to the sets $C_1, C_2$ and $C_3$ are complete graphs, respectively. As $a^{2n} \in Z(SD_{8n})$ (cf. Eq. (1)), gives $a^{2n}$ adjacent to every vertex of  $C_2$ and $C_3$ in 
$OSCom(SD_{8n})$. This implies that $N[C_1]= SD_{8n}$. We noticed that $N[a^n] = SD_{8n}$. Therefore, we have $N[C_2]= N[C_3]= SD_{8n}$. By Theorem \ref{multiplicity}, $-(8n-1)\sqrt{2}$ is an eigenvalue of multiplicity $8n-1$. Since the sum of the eigenvalues of Sombor matrix of  $OSCom(SD_{8n})$ is equal to its trace, which is zero, the remaining eigenvalue is equal to $(8n-1)^2\sqrt{2}$.
\end{proof}

\subsection{Conjugacy Supercommuting Graph}
In this subsection, we discuss the Sombor spectrum of $\Delta^c(G)$, for $G= D_{2n}, Q_{4n}$ and $SD_{8n}$. Dalal \emph{et al.} \cite{spectrumofsupergraphs}, gave a representation of the following graphs: 

\[
\Delta^c(Q_{4n})= 
\begin{dcases}
    K_{1,2}[K_2,K_{2n-2},K_{2n}],& \text{ if $n$ is odd}   \\
    K_{1,3}[K_2,K_{2n-2},K_{n},K_{n}],&   \text{ if $n$ is even}   \\
\end{dcases}
\]
and 
\[
\Delta^c(D_{2n}) \cong
\begin{dcases}
    K_{1,2}[K_1,K_{n-1},K_n],& \text{ if $n$ is odd}   \\
    K_{1,2}[K_2,K_{n-2},K_n],&  \text{ if $n$ is even and $\frac{n}{2}$ is odd }   \\
      K_{1,3}[K_2,K_{n-2},K_{\frac{n}{2}}, K_{\frac{n}{2}}],&  \text{ if $n$ is even and $\frac{n}{2}$ is even}.   \\
  \end{dcases}
\]

Also, the structure of conjugacy super computing graph of semidihedral group $SD_{8n}$ (see \cite{das2024super}) is 
\[
\Delta^c(SD_{8n})= 
\begin{dcases}
    K_{1,2}[K_4,K_{4n-4},K_{4n}],& \text{if $n$ is odd}   \\
    K_{1,3}[K_2,K_{4n-2},K_{2n},K_{2n}],& \text{if $n$ is even}.   \\
\end{dcases}
\]

By Corollary \ref{Generalize join}, the proof of the following Corollaries are straightforward.

\begin{corollary} \label{ConjugacySC-SD_8n}
In $\Delta^c(SD_{8n})$, 
\begin{itemize}
\item [(i)] for odd $n$, the Sombor spectrum of conjugacy super commuting graph $\Delta^c(SD_{8n})$ is 
    \[\displaystyle \begin{pmatrix}
-(8n-1)\sqrt{2} & -(4n-1)\sqrt{2} & -(4n+3)\sqrt{2}\\
 3 & 4n-5 & 4n-1\\
\end{pmatrix}
,\]
\noindent and the remaining eigenvalues are the roots of the following  polynomial\\
$\Big[x - 3(8n-1)\sqrt{2}\Big] \Big[ x - (4n-1)(4n-5)\sqrt{2}\Big] \Big[x - (4n-1)(4n+3)\sqrt{2}\Big]- 32(n-1) (40n^2-12n +1) \times$\\ 
\\
\noindent$\Big[x - (4n-1)(4n+3)\sqrt{2}\Big]- 32n (40n^2 + 4n + 5) \Big[x - (4n-5)(4n-1)\sqrt{2}\Big]$. 
\vspace{0.2cm}
\item[(ii)] for even $n$, the Sombor spectrum of conjugacy super commuting graph $\Delta^c(SD_{8n})$ is 
     \[\displaystyle \begin{pmatrix}
-(2n+1)\sqrt{2} &  -(4n-1)\sqrt{2}  &  -(8n-1)\sqrt{2}\\
 4n-2 & 4n-3  & 1 \\
\end{pmatrix}
,\]
and the remaining eigenvalues are the roots of the following polynomial\\
$\Big[x - (8n-1)\sqrt{2}\Big] \Big[x - (4n-1)(4n-3)\sqrt{2}\Big] \Big[x - (2n-1)(2n+1)\sqrt{2}\Big]^2
 - 8(2n-1)(40n^2 -12n +1) \times$ \\ 
\\ 
 $\Big[x - (2n-1)(2n+1)\sqrt{2}\Big]^2
 - 16n (34n^2 - 6n +1 ) \Big[x - (4n-3)(4n-1)\sqrt{2}\Big] \Big[x - (2n-1)(2n+1)\sqrt{2}\Big]$. 

 \end{itemize}
\end{corollary}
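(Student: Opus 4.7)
The plan is to apply Corollary \ref{Generalize join} directly to the two structural descriptions of $\Delta^c(SD_{8n})$ recalled just before the statement. For odd $n$ we have $\Delta^c(SD_{8n}) \cong K_{1,2}[K_{n_1},K_{n_2},K_{n_3}]$ with $(n_1,n_2,n_3)=(4,\,4n-4,\,4n)$, and for even $n$ we have $\Delta^c(SD_{8n}) \cong K_{1,3}[K_{n_1},K_{n_2},K_{n_3},K_{n_4}]$ with $(n_1,n_2,n_3,n_4)=(2,\,4n-2,\,2n,\,2n)$. In both cases the total number of vertices is $8n$, as it must be.

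First I would compute the vertex degrees under the join description: each vertex of $K_{n_1}$ (the ``center'' of the star) is adjacent to everything, so $d_1=8n-1$; each vertex of $K_{n_i}$ for $i\ge 2$ is adjacent to the other $n_i-1$ vertices in its own clique plus the $n_1$ vertices of the center, giving $d_i=n_1+n_i-1$. This yields $(d_1,d_2,d_3)=(8n-1,\,4n-1,\,4n+3)$ in the odd case, and $(d_1,d_2,d_3,d_4)=(8n-1,\,4n-1,\,2n+1,\,2n+1)$ in the even case. Corollary \ref{Generalize join} then immediately delivers the eigenvalues $-d_i\sqrt{2}$ with multiplicity $n_i-1$, which accounts precisely for the entries $-(8n-1)\sqrt{2},\;-(4n-1)\sqrt{2},\;-(4n+3)\sqrt{2}$ with multiplicities $3,\,4n-5,\,4n-1$ in part (i) and $-(8n-1)\sqrt{2},\;-(4n-1)\sqrt{2},\;-(2n+1)\sqrt{2}$ with multiplicities $1,\,4n-3,\,2(2n-1)=4n-2$ in part (ii).

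The remaining eigenvalues are the roots of the factor $\chi(N,x)$ in Corollary \ref{Generalize join}, which I would write out directly from the template and then simplify. For part (i), $\chi(N,x)$ equals
\[
\prod_{i=1}^{3}\bigl(x-(n_i-1)d_i\sqrt{2}\bigr)-n_1n_2(d_1^2+d_2^2)\bigl(x-(n_3-1)d_3\sqrt{2}\bigr)-n_1n_3(d_1^2+d_3^2)\bigl(x-(n_2-1)d_2\sqrt{2}\bigr),
\]
and substitution together with the identities $(8n-1)^2+(4n-1)^2 = 2(40n^2-12n+1)$ and $(8n-1)^2+(4n+3)^2 = 2(40n^2+4n+5)$ gives the stated polynomial after collecting the constants $n_1n_2=16(n-1)$ and $n_1n_3=16n$. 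For part (ii), the analogous expansion has four terms, but because $n_3=n_4$ and $d_3=d_4$ the two ``leg'' terms involving $K_{2n}$ coincide and combine with a factor of $2$; using $(8n-1)^2+(2n+1)^2 = 2(34n^2-6n+1)$ together with $n_1n_2=4(2n-1)$ and $2n_1n_3=8n$ yields exactly the polynomial displayed in the statement.

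The only non-routine step is the algebraic verification that the coefficients produced by Corollary \ref{Generalize join} reduce to the constants $32(n-1)(40n^2-12n+1)$, $32n(40n^2+4n+5)$, $8(2n-1)(40n^2-12n+1)$, and $16n(34n^2-6n+1)$ appearing in the target polynomials; this will be the main bookkeeping obstacle but is entirely elementary, amounting to expanding three squares per case. Nothing else about the proof is subtle: the structural decomposition has been quoted from \cite{das2024super}, and once the degrees are identified the rest is a direct application of the generalized-join formula.
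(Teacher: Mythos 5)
Your proposal is correct and is exactly the argument the paper intends: the paper gives no separate proof for this corollary, stating only that it follows directly from Corollary \ref{Generalize join} applied to the decompositions $K_{1,2}[K_4,K_{4n-4},K_{4n}]$ and $K_{1,3}[K_2,K_{4n-2},K_{2n},K_{2n}]$ quoted from \cite{das2024super}. Your degree computations, multiplicities, and the simplifications $(8n-1)^2+(4n-1)^2=2(40n^2-12n+1)$, $(8n-1)^2+(4n+3)^2=2(40n^2+4n+5)$, and $(8n-1)^2+(2n+1)^2=2(34n^2-6n+1)$ all check out against the stated spectrum and polynomials.
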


\begin{corollary} \label{ConjugacySC-Q_4n}
Let $\Delta^c(Q_{4n})$ be the conjugacy super commuting graph of the generalized quaternion group.  
\begin{itemize}
    \item [(i)] If $n$ is odd, then the Sombor spectrum of conjugacy super commuting graph $\Delta^c(Q_{4n})$ is 
    \[\displaystyle \begin{pmatrix}
-(4n-1)\sqrt{2} & -(2n-1)\sqrt{2} & -(2n+1)\sqrt{2} \\
 1 & 2n-3 & 2n-1  \\
\end{pmatrix},
\]
\noindent and the remaining eigenvalues are the roots of the following polynomial\\
\noindent$\Big[x-(4n-1)\sqrt{2}\Big]\Big[x - (2n-1)(2n-3)\sqrt{2}\Big] \Big[x - (2n-1)(2n+1)\sqrt{2} \Big]
 - 8(n-1) (10n^2 - 6n +1) \times$\\
 \\
 $ \Big[x - (2n-1)(2n+1)\sqrt{2}\Big]
- 8n (10n^2 -2n + 1) \Big[x - (2n-1)(2n-3)\sqrt{2}\Big].$
\vspace{0.2cm}
 \item[(ii)] If $n$ is even, then the Sombor spectrum of conjugacy super commuting graph $\Delta^c(Q_{4n})$ is 
     \[\displaystyle \begin{pmatrix}
-(4n-1)\sqrt{2} & -(2n-1)\sqrt{2} & -(n+1)\sqrt{2} \\
 1 & 2n-3 & 2n-2  \\
\end{pmatrix},\]
\noindent and the remaining eigenvalues are the roots of the following polynomial\\
$\Big[x - (4n-1)\sqrt{2}\Big] \Big[x - (n-1)(n+1)\sqrt{2}\Big]^2 \Big[x - (2n-1)(2n-3)\sqrt{2}\Big] 
 - 8(n-1) (10n^2 - 6n +1) \Big[x - (n-1)(n+1)\sqrt{2}\Big]^2 $\\
 \\
 $- 4n (17n^2 -6n + 2) \Big[x - (2n-3)(2n-1)\sqrt{2}\Big]\Big[x - (n-1)(n+1)\sqrt{2}\Big]$.
\end{itemize}
 \end{corollary}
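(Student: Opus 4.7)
The plan is to apply Corollary \ref{Generalize join} directly to the graph representations of $\Delta^c(Q_{4n})$ supplied just before the statement (taken from \cite{spectrumofsupergraphs}), by reading off the block sizes $n_i$ and the corresponding vertex degrees $d_i$, then simplifying the general expression.

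\emph{Case (i): $n$ odd.} Here $\Delta^c(Q_{4n}) \cong K_{1,2}[K_2, K_{2n-2}, K_{2n}]$, so we set $k=3$, $n_1=2$, $n_2=2n-2$, $n_3=2n$. Since vertices of $K_{n_1}$ are adjacent to everything, $d_1 = 4n-1$; the remaining two cliques are adjacent only to $K_{n_1}$, so $d_2 = n_1 + n_2 - 1 = 2n-1$ and $d_3 = n_1 + n_3 - 1 = 2n+1$. Corollary \ref{Generalize join} then gives the three clique-eigenvalues $-(4n-1)\sqrt{2}$, $-(2n-1)\sqrt{2}$, $-(2n+1)\sqrt{2}$ with multiplicities $n_i - 1 = 1, 2n-3, 2n-1$ respectively, matching the stated spectrum. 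The remaining eigenvalues are roots of $\chi(N,x)$, which in this case reads
\[
\big[x-(4n-1)\sqrt{2}\big]\big[x-(2n-3)(2n-1)\sqrt{2}\big]\big[x-(2n-1)(2n+1)\sqrt{2}\big] - n_1 n_2 (d_1^2 + d_2^2)\big[x-(2n-1)(2n+1)\sqrt{2}\big] - n_1 n_3 (d_1^2 + d_3^2)\big[x-(2n-3)(2n-1)\sqrt{2}\big].
\]
A short computation yields $n_1 n_2 (d_1^2 + d_2^2) = 4(n-1)(20n^2 - 12n + 2) = 8(n-1)(10n^2 - 6n + 1)$ and $n_1 n_3 (d_1^2 + d_3^2) = 4n(20n^2 - 4n + 2) = 8n(10n^2 - 2n + 1)$, which gives the polynomial stated in (i).

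\emph{Case (ii): $n$ even.} Now $\Delta^c(Q_{4n}) \cong K_{1,3}[K_2, K_{2n-2}, K_n, K_n]$, so $k=4$ with $n_1=2$, $n_2=2n-2$, $n_3=n_4=n$, giving $d_1 = 4n-1$, $d_2 = 2n-1$, $d_3=d_4=n+1$. Applying Corollary \ref{Generalize join} yields the clique-eigenvalues $-(4n-1)\sqrt{2}, -(2n-1)\sqrt{2}, -(n+1)\sqrt{2}$ with multiplicities $1$, $2n-3$, and $(n-1)+(n-1)=2n-2$, matching the stated list. For the polynomial part, the coincidence $d_3=d_4$ is what forces the repeated factor $[x-(n-1)(n+1)\sqrt{2}]^2$ in the product, and the two last terms in the expansion collapse into a single term $2\cdot n_1 n_3 (d_1^2 + d_3^2) [x - (2n-3)(2n-1)\sqrt{2}][x - (n-1)(n+1)\sqrt{2}]$. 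Using $d_1^2 + d_2^2 = 20n^2 - 12n + 2$ and $d_1^2 + d_3^2 = 17n^2 - 6n + 2$, one recovers the stated polynomial after identifying the coefficients $8(n-1)(10n^2-6n+1)$ and $4n(17n^2-6n+2)$.

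The only real obstacle is bookkeeping: one must be careful to collect the two identical $K_n$-blocks correctly in case (ii), since Corollary \ref{Generalize join} lists each leaf separately, whereas the stated answer groups $-(n+1)\sqrt{2}$ with combined multiplicity $2n-2$ and combines the two corresponding summands in $\chi(N,x)$ into a single term with coefficient $2 n_1 n_3 (d_1^2 + d_3^2)$. Once that identification is made, both cases are a direct substitution into Corollary \ref{Generalize join} followed by elementary arithmetic, exactly parallel to the proofs of Corollaries \ref{commuting-D_2n}--\ref{commuting-SD_8n}.
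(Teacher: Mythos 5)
Your proposal is correct and follows exactly the route the paper intends: the paper itself dispenses with these cases by the remark ``By Corollary \ref{Generalize join}, the proof of the following Corollaries are straightforward,'' and your substitutions $n_1=2$, $n_2=2n-2$, $n_3=2n$ (resp.\ $n_3=n_4=n$) with $d_1=4n-1$, $d_2=2n-1$, $d_3=2n+1$ (resp.\ $n+1$), together with the simplifications $n_1n_2(d_1^2+d_2^2)=8(n-1)(10n^2-6n+1)$, $n_1n_3(d_1^2+d_3^2)=8n(10n^2-2n+1)$ and $2n_1n_3(d_1^2+d_3^2)=4n(17n^2-6n+2)$, check out. The handling of the two coincident $K_n$ blocks in case (ii) is also the correct way to recover the squared factor and the combined coefficient.
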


\begin{corollary} \label{ConjugacySC-D_2n}
Let $\Delta^c(D_{2n})$ be the conjugacy super commuting graph of the dihedral group.
\begin{itemize}
    \item [(i)] If $n$ is odd, then the Sombor spectrum of conjugacy super commuting graph $\Delta^c(D_{2n})$ is 
    \[\displaystyle \begin{pmatrix}
-(n-1)\sqrt{2} & -n\sqrt{2} \\
 n-2 & n-1 \\
\end{pmatrix},
\]
\noindent and remaining eigenvalues are the roots of the following polynomial\\  
$x\Big[ x - (n-1)(n-2)\sqrt{2}\Big] \Big[x-n(n-1)\sqrt{2}\Big]
     - (n-1)(5n^2 -6n + 2)\Big[x - n(n-1)\sqrt{2}\Big] 
 - n(5n^2 -4n + 1)\times$\\
 \\
 $\Big[x - (n-1)(n-2)\sqrt{2}\Big]$.
\vspace{0.2cm}
\item[(ii)] If $n$ is even and $\frac{n}{2}$ is odd, then the Sombor spectrum of conjugacy super commuting graph $\Delta^c(D_{2n})$ is 
     \[\displaystyle \begin{pmatrix}
-(n+1)\sqrt{2} & -(n-1)\sqrt{2} &  -(2n-1)\sqrt{2} \\
 n-1 & n-3 &  1\\
\end{pmatrix},\]
\noindent and the remaining eigenvalues are the roots of the following polynomial\\
$\Big[x - (2n-1)\sqrt{2}\Big] \Big[x - (n-1)(n+1)\sqrt{2}\Big] \Big[x - (n-1)(n-3)\sqrt{2}\Big]
 - 2(n-2)(5n^2 -6n + 2) \times$\\
\\
$\Big[x - (n-1)(n+1)\sqrt{2}\Big] 
 - 2n(5n^2 -2n +2) \Big[x - (n-1)(n-3)\sqrt{2}\Big]$.

\vspace{0.2cm}
\item[(iii)] If $n$ is even and $\frac{n}{2}$ is even, then the Sombor spectrum of conjugacy super commuting graph $\Delta^c(D_{2n})$ is
\[\displaystyle \begin{pmatrix}
-(\frac{n}{2}+1)\sqrt{2} & -(n-1)\sqrt{2} & -(2n-1)\sqrt{2} \\
 n-2 & n-3 &1 \\
\end{pmatrix},\]
\noindent and remaining eigenvalues are the roots of the following polynomial\\
$\Big[x - (2n-1)\sqrt{2}\Big] \Big[x - (n-1)(n-3)\sqrt{2}\Big] \Big[x - (\frac{n}{2}-1)(\frac{n}{2}+1)\sqrt{2}\Big]^2
 - 2(n-2)(5n^2 -6n +2) \times$\\
 \\
 $\Big[x - (\frac{n}{2}-1)(\frac{n}{2}+1)\sqrt{2}\Big]^2 
 - 2n \Big[(2n-1)^2 +(\frac{n}{2}+1)^2\Big] \Big[x - (n-3)(n-1)\sqrt{2}\Big]\Big[x - (\frac{n}{2}-1)(\frac{n}{2}+1)\sqrt{2}\Big]$. 
\end{itemize}
\end{corollary}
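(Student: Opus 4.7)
The plan is to invoke Corollary~\ref{Generalize join} once for each of the three generalized join representations of $\Delta^c(D_{2n})$ listed immediately before the statement. The recipe is identical in every case: read off the part sizes $n_i$ from the representation $K_{1,k-1}[K_{n_1},\dots,K_{n_k}]$, compute the vertex degrees $d_1 = 2n - 1$ (since $|D_{2n}| = 2n$) and $d_i = n_1 + n_i - 1$ for $i \ge 2$, then record the eigenvalues $-d_i\sqrt{2}$ with multiplicity $n_i - 1$ and treat the remaining $k$ eigenvalues as the roots of the $k \times k$ quotient polynomial of Corollary~\ref{Generalize join}.

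For part (i), with $n$ odd, I take $(n_1,n_2,n_3)=(1,n-1,n)$, giving $(d_1,d_2,d_3)=(2n-1,n-1,n)$. The $-d_1\sqrt{2}$ term has multiplicity $n_1 - 1 = 0$ and so drops out, leaving $-(n-1)\sqrt{2}$ with multiplicity $n-2$ and $-n\sqrt{2}$ with multiplicity $n-1$; expanding the $3 \times 3$ determinant along its first row produces the stated cubic factor, with the coefficient $(n-1)(5n^2-6n+2)$ arising from $n_1 n_2(d_1^2+d_2^2) = (n-1)\bigl[(2n-1)^2+(n-1)^2\bigr]$ and the coefficient $n(5n^2-4n+1)$ from $n_1 n_3(d_1^2+d_3^2)$. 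Part (ii) is entirely analogous, with $(n_1,n_2,n_3)=(2,n-2,n)$ and $(d_1,d_2,d_3)=(2n-1,n-1,n+1)$, producing multiplicities $1, n-3, n-1$ and another cubic remainder.

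Part (iii) is the most delicate: I use the $4$-part representation $K_{1,3}[K_2,K_{n-2},K_{n/2},K_{n/2}]$, obtaining $d_1=2n-1$, $d_2=n-1$, and $d_3=d_4=n/2+1$. Because $d_3=d_4$, the eigenvalue $-(n/2+1)\sqrt{2}$ accumulates the combined multiplicity $(n/2-1)+(n/2-1)=n-2$ coming from parts $3$ and $4$; correspondingly, in expanding the $4 \times 4$ quotient determinant along the first row, the $j=3$ and $j=4$ cofactors contribute identical expressions, so their sum yields the coefficient $2n\bigl[(2n-1)^2+(n/2+1)^2\bigr]$ and the two factors $(x-(n/2-1)(n/2+1)\sqrt{2})$ coming from parts $3$ and $4$ collapse into the squared factor displayed in the statement.

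The only real obstacle is the bookkeeping for case (iii); once the two equal-degree rows are handled in parallel using the observation above, and the identity $(2n-1)^2+(n-1)^2 = 5n^2-6n+2$ is used to simplify the $j=2$ cofactor, the stated polynomial emerges directly from Corollary~\ref{Generalize join}.
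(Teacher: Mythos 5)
Your proposal is correct and is exactly the route the paper takes: the paper simply cites the three generalized-join representations of $\Delta^c(D_{2n})$ and declares the result a straightforward consequence of Corollary~\ref{Generalize join}, which is precisely your computation (your part sizes, degrees, multiplicities, and the coefficient simplifications $(2n-1)^2+(n-1)^2=5n^2-6n+2$, $(2n-1)^2+n^2=5n^2-4n+1$, $(2n-1)^2+(n+1)^2=5n^2-2n+2$, and the merging of the two equal-degree parts in case (iii) all check out).
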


    


\section{Super enhanced power graph}

Aalipour et al. \cite{aalipour2017enhanced} proposed the idea of the enhanced power graph of a group to see how close the power graph is to the commuting graph. The enhanced power graph $\mathcal{P}_E(G)$ of a group $G$ is a simple graph whose vertex set is the whole group $G$ and two distinct vertices $x,y$ are adjacent if $x,y \in \left<z\right>$ for some $z \in G$. Further, the enhanced power graphs have been studied by various researchers. Bera and Bhuniya \cite{bera2018enhanced} characterized the abelian groups and the non-abelian $p$-groups, where $p$ is a prime, having dominatable enhanced power graphs. Dalal \emph{et al.} \cite{dalal2021enhanced} investigated the graph-theoretic properties of enhanced power graphs over the dicyclic group and the group $V_{8n}$. Additionally, Parveen \emph{et al.} \cite{parveen2024enhanced} explore the Laplacian spectrum of enhanced power graph of certain non-abelian groups. In this section, we discuss the Sombor spectrum of the super enhanced power graphs $\mathcal{P}_E(G), \mathcal{P}_E^o(G), \mathcal{P}_E^c(G)$, for $G= D_{2n}, Q_{4n}$ and $SD_{8n}$ into three subsections.


\subsection{Enhanced Power Graph}

In this subsection, we discuss the Sombor spectrum of the enhanced power graph $\mathcal{P}_E(G)$, for $G= D_{2n}, Q_{4n}$ and $SD_{8n}$. Parveen \emph{et al.} explored the structure of enhanced power graph $\mathcal{P}_E(D_{2n})$ and $\mathcal{P}_E(Q_{4n})$ (see \cite{parveen2024enhanced}) and their representation are  given below
\[
\mathcal{P}_E(D_{2n})=
    K_{1,2}[K_1,K_{n-1},\underbrace{K_1, \ldots, K_1}_{n-{\rm times}}],
\]

and 

\[
\mathcal{P}_E(Q_{4n}) = K_{1,n+1}[K_2, K_{2n-2}, \underbrace{K_2, \ldots, K_2}_{n-{\rm times}}].
\]

By Corollary \ref{Generalize join}, the proof of the following Corollaries are straightforward.

\begin{corollary} \label{Enhanced-D_2n}
Let $\mathcal{P}_{E}(D_{2n})$ be the enhanced power graph of the dihedral group. Then the  Sombor spectrum is 
    \[\displaystyle \begin{pmatrix}
-(n-1)\sqrt{2} & 0 \\
 n-2 & n-1 \\
\end{pmatrix},
\]
and the remaining three eigenvalues are the roots of the following  polynomial 

$ x^2\Big[x-(n-1)(n-2)\sqrt{2}\Big] 
 - (n-1)(5n^2 -6n +2)x 
 - 2n(2n^2-2n + 1)\Big[x-(n-1)(n-2)\sqrt{2}\Big].$
\end{corollary}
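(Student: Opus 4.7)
The plan is to invoke Corollary \ref{Generalize join} applied to the generalized join presentation of $\mathcal{P}_E(D_{2n})$ recorded just above the statement. First I would unpack that structure, reading it as the star join $K_{1,n+1}[K_1,K_{n-1},K_1,\ldots,K_1]$ in which the central $K_1$ corresponds to the identity $e$, the $K_{n-1}$ to the cyclic block $\langle a\rangle\setminus\{e\}$, and the $n$ trailing copies of $K_1$ to the reflections $a^{i}b$, each lying only in the cyclic subgroup $\{e,a^{i}b\}$. This interpretation is justified because two distinct reflections $a^{i}b$ and $a^{j}b$ are never contained in a common cyclic subgroup of $D_{2n}$ and are therefore non-adjacent in $\mathcal{P}_E(D_{2n})$.

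Next I would read off the degrees and sizes needed for Corollary \ref{Generalize join}: in its notation, $k=n+2$, $n_1=1$, $n_2=n-1$, $n_3=\cdots=n_{k}=1$, and the total vertex count $2n$ forces $d_1=2n-1$, $d_2=n-1$, and $d_i=1$ for $i\geq 3$. Applying Lemma \ref{multiplicity} then contributes two families of eigenvalues: the clique $\langle a\rangle\setminus\{e\}$ has shared outside-neighborhood $\{e\}$, so $-(n-1)\sqrt{2}$ appears with multiplicity at least $n-2$; and the $n$ reflections form an independent set with shared neighborhood $\{e\}$, so $0$ appears with multiplicity at least $n-1$. This accounts for $2n-3$ of the $2n$ Sombor eigenvalues.

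To obtain the remaining three eigenvalues, I would substitute the above data into the polynomial formula of Corollary \ref{Generalize join}. The product $\prod_{i=1}^{k}(x-(n_i-1)\sqrt{2}\,d_i)$ collapses to $x^{n+1}\bigl(x-(n-1)(n-2)\sqrt{2}\bigr)$; the $j=2$ correction term becomes $(n-1)\bigl((2n-1)^2+(n-1)^2\bigr)x^{n}=(n-1)(5n^2-6n+2)x^{n}$; and the $n$ identical correction terms coming from the singleton reflection classes merge into $n\bigl((2n-1)^2+1\bigr)\bigl(x-(n-1)(n-2)\sqrt{2}\bigr)x^{n-1}=2n(2n^2-2n+1)\bigl(x-(n-1)(n-2)\sqrt{2}\bigr)x^{n-1}$. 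Pulling out the common factor $x^{n-1}$ from the bracket then leaves precisely the stated cubic, while the prefactor $(x+(n-1)\sqrt{2})^{n-2}$ coming from $\prod_{i}(x+\sqrt{2}\,d_i)^{n_i-1}$ confirms that $-(n-1)\sqrt{2}$ has the asserted multiplicity $n-2$.

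The main obstacle is purely bookkeeping rather than conceptual: one must coalesce the $n$ singleton reflection classes (all sharing degree $1$ and neighborhood $\{e\}$) into a single block contribution, track the accompanying factor $x^{n-1}$ correctly, and verify the numeric identities $(2n-1)^2+(n-1)^2=5n^2-6n+2$ and $(2n-1)^2+1=2(2n^2-2n+1)$ that produce the coefficients appearing in the claimed polynomial. Once that reduction is carried out, matching the displayed spectrum is a direct algebraic verification from Corollary \ref{Generalize join}.
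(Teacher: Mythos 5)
Your proposal is correct and follows exactly the route the paper intends: the paper simply asserts that the result is "straightforward" from Corollary \ref{Generalize join} applied to the displayed structure of $\mathcal{P}_E(D_{2n})$, and you supply the omitted bookkeeping (reading the $n$ reflections as singleton blocks so that $k=n+2$, $d_1=2n-1$, $d_2=n-1$, $d_i=1$ for $i\ge 3$, and verifying $(2n-1)^2+(n-1)^2=5n^2-6n+2$ and $(2n-1)^2+1=2(2n^2-2n+1)$), all of which checks out.
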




\begin{corollary} \label{Enhanced-Q_4n}
Let $\mathcal{P}_{E}(Q_{4n})$ be the enhanced power graph of the generalized quaternion group. Then the Sombor spectrum is

    \[\displaystyle \begin{pmatrix}
-(4n-1)\sqrt{2} & -(2n-1)\sqrt{2} & -3\sqrt{2} \\
 1 & 2n-3 & n  \\
\end{pmatrix},
\]
\noindent and the remaining eigenvalues are the roots of the following polynomial\\ 
$\Big[x-(4n-1)\sqrt{2}\Big] \Big[ x - (2n-1)(2n-3)\sqrt{2} \Big] \Big[x -3\sqrt{2}\Big]^n 
 - 8(n-1) (10n^2 -6n +1) \Big[x -3\sqrt{2}\Big]^n
- 8n(8n^2 - 4n +5) \times$ \\
\\
$\Big[x - (2n-1)(2n-3)\sqrt{2}\Big] \Big[x -3\sqrt{2}\Big]^{n-1}$.
\end{corollary}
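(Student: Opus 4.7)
The plan is to apply Corollary \ref{Generalize join} to the generalized join representation $\mathcal{P}_E(Q_{4n}) = K_{1,n+1}[K_2, K_{2n-2}, \underbrace{K_2, \ldots, K_2}_{n\text{-times}}]$ recalled just above the statement, in exactly the same manner as was used for $\Delta(Q_{4n})$ in Corollary \ref{commuting-Q_4n}. (This coincidence is not accidental: in $Q_{4n}$ two commuting elements always lie in a common cyclic subgroup, namely $\langle a \rangle$ or one of the order-$4$ subgroups $\langle a^i b\rangle = \{e, a^ib, a^n, a^{n+i}b\}$, so $\mathcal{P}_E(Q_{4n})$ and $\Delta(Q_{4n})$ are isomorphic.)

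The first step is to read off the common degree inside each of the $n+2$ blocks. The central $K_2$ of the star is adjacent to every other vertex, so its vertices have degree $d_1 = 4n-1$. A vertex in the leaf $K_{2n-2}$ is adjacent only to the other $2n-3$ vertices of that block together with the two central vertices, giving $d_2 = 2n-1$. A vertex in one of the remaining $n$ leaves $K_2$ is adjacent to the one other vertex of its block and to the two central vertices, giving $d_j = 3$ for $j = 3, \ldots, n+2$. From the factor $\prod_{i=1}^{n+2}(x+\sqrt{2}\,d_i)^{n_i - 1}$ appearing in Corollary \ref{Generalize join}, one then reads off the Sombor eigenvalues $-(4n-1)\sqrt{2}$, $-(2n-1)\sqrt{2}$, and $-3\sqrt{2}$ with multiplicities $1$, $2n-3$, and $n$ respectively, which is the spectrum displayed in the statement.

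The remaining eigenvalues are the roots of the characteristic polynomial of the quotient matrix $N$. Expanding along the first row exactly as in Corollary \ref{Generalize join}, the $j=2$ summand contributes the coefficient $n_1 n_2(d_1^2 + d_2^2) = 2(2n-2)\bigl[(4n-1)^2 + (2n-1)^2\bigr] = 8(n-1)(10n^2-6n+1)$ multiplied by $[x-3\sqrt{2}]^n$. For $j = 3, \ldots, n+2$ the $n$ summands are identical, each equal to $n_1 n_j(d_1^2 + d_j^2) = 4\bigl[(4n-1)^2 + 9\bigr] = 8(8n^2 - 4n + 5)$ multiplied by $[x - (2n-1)(2n-3)\sqrt{2}][x-3\sqrt{2}]^{n-1}$, and summing the $n$ identical contributions yields the single coefficient $8n(8n^2-4n+5)$. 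Substituting these into the formula of Corollary \ref{Generalize join} produces the claimed polynomial.

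The main obstacle is purely arithmetic bookkeeping: simplifying the sums of squares $(4n-1)^2 + (2n-1)^2 = 20n^2 - 12n + 2$ and $(4n-1)^2 + 9 = 16n^2 - 8n + 10$, and observing that the $n$ terms coming from the $K_2$-leaves are identical so that the $(n+2)-1 = n+1$ quotient-matrix summands collapse to just the two explicit terms appearing in the statement, rather than producing $n$ separately listed polynomial terms.
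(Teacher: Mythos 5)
Your proposal is correct and follows exactly the route the paper intends: the paper simply declares this corollary "straightforward" from Corollary \ref{Generalize join} applied to $\mathcal{P}_E(Q_{4n}) = K_{1,n+1}[K_2, K_{2n-2}, K_2, \ldots, K_2]$, and your degree computations, multiplicities, and the simplifications $2(2n-2)[(4n-1)^2+(2n-1)^2] = 8(n-1)(10n^2-6n+1)$ and $4[(4n-1)^2+9] = 8(8n^2-4n+5)$ match the computation the paper carries out explicitly for the isomorphic graph $\Delta(Q_{4n})$ in Corollary \ref{commuting-Q_4n}.
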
 

The following theorem will be useful in the sequel. 

\begin{lemma}[{\cite[Lemma 2.3]{parveen2024enhanced}}]\label{EPG-SD8n-NBD}
In $\mathcal{P}_E(SD_{8n})$, we have
\begin{enumerate}
\item[\rm (i)] {\rm N}$[e]= SD_{8n}$.
\item[\rm (ii)] {\rm N}$[a^{2n}]= \langle a \rangle \cup \{a^{2i + 1}b: 0 \leq i \leq 2n -1 \}$.
\item[\rm (iii)] {\rm N}$[a^{i}]= \langle a \rangle$, where $1 \leq i \leq 4n -1$ and $i \neq 2n$.
\item[\rm (iv)] {\rm N}$[a^{2i+ 1}b]= \langle a^{2i + 1}b \rangle = \{e, a^{2n},  a^{2i+ 1}b, a^{2n + 2i+ 1}b\}$, where $0 \leq i \leq 2n -1$.
\item[\rm (v)] {\rm N}$[a^{2i}b]= \{e, a^{2i}b\}$, where $1 \leq i \leq 2n$.
\end{enumerate}
\end{lemma}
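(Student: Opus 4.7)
The plan is to use the defining property of the enhanced power graph: two distinct vertices $x, y$ are adjacent in $\mathcal{P}_E(G)$ if and only if $\{x, y\}$ lies in some cyclic subgroup of $G$, equivalently, in some \emph{maximal} cyclic subgroup. The proof therefore reduces to two tasks: (a) enumerate the maximal cyclic subgroups of $SD_{8n}$, and (b) for each representative vertex $x$, take $\mathrm{N}[x]$ to be the union of those maximal cyclic subgroups containing $x$.

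For task (a), the decomposition in Equation (\ref{Eq-3}) already supplies the candidates: $\langle a \rangle$ (cyclic of order $4n$), the subgroups $Q_j = \{e, a^{2n}, a^{2j+1}b, a^{2n+2j+1}b\}$ of order $4$ for $0 \leq j \leq n-1$, and the subgroups $P_i = \{e, a^{2i}b\}$ of order $2$ for $0 \leq i \leq 2n-1$. I would first verify the orders of $a^{2i}b$ and $a^{2j+1}b$ using the defining relation, the key computation being $(a^{2j+1}b)^2 = a^{2j+1}\cdot ba^{2j+1}\cdot b = a^{2j+1}\cdot a^{2n-(2j+1)}b\cdot b = a^{2n}$, so that every element outside $\langle a \rangle$ has order at most $4$. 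Maximality of $\langle a \rangle$ and each $Q_j$ is then immediate (any strictly larger cyclic subgroup would contain an element of order $>4$, which must already lie in $\langle a \rangle$, forcing $a^{2j+1}b \in \langle a \rangle$); maximality of each $P_i$ follows because $a^{2i}b$ appears neither in $\langle a \rangle$ nor in any $Q_j$.

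Task (b) is then case-by-case bookkeeping:
\begin{itemize}
\item[(i)] $e$ lies in every cyclic subgroup, so $\mathrm{N}[e] = SD_{8n}$.
\item[(ii)] $a^{2n}$ lies in $\langle a \rangle$ and, by the computation above, in every $Q_j$, but in no $P_i$; taking the union yields $\langle a \rangle \cup \{a^{2i+1}b : 0 \leq i \leq 2n-1\}$.
\item[(iii)] For $a^i$ with $i \neq 0, 2n$, the element lies only in $\langle a \rangle$, since $Q_j \cap \langle a \rangle = \{e, a^{2n}\}$ and $P_i \cap \langle a \rangle = \{e\}$.
\item[(iv)] $a^{2i+1}b$ lies only in the unique $Q_j$ it generates, giving the closed neighbourhood $\{e, a^{2n}, a^{2i+1}b, a^{2n+2i+1}b\}$.
\item[(v)] $a^{2i}b$ with $i \not\equiv 0 \pmod{2n}$ lies only in $P_i$, giving $\mathrm{N}[a^{2i}b] = \{e, a^{2i}b\}$.
\end{itemize}

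The only genuinely non-routine step is the identity $(a^{2j+1}b)^2 = a^{2n}$ valid uniformly in $j$: this is what makes $a^{2n}$ the common element of order $2$ inside every $Q_j$, and hence is directly responsible for the mixed shape of $\mathrm{N}[a^{2n}]$ in (ii). Once this small computation and the enumeration of maximal cyclic subgroups are in place, all five items follow by inspection.
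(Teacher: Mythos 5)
Your proof is correct, but there is nothing in the paper to compare it against: the statement is imported verbatim from Lemma~2.3 of Parveen \emph{et al.} \cite{parveen2024enhanced} and the present paper supplies no proof of its own, only the citation. Your argument is a sound, self-contained derivation. The reduction of adjacency in $\mathcal{P}_E(G)$ to membership in a common \emph{maximal} cyclic subgroup is legitimate (every cyclic subgroup of a finite group sits inside a maximal one), your enumeration of the maximal cyclic subgroups of $SD_{8n}$ as $\langle a\rangle$, the $Q_j$ and the $P_i$ is complete, and the pivotal computation $(a^{2j+1}b)^2 = a^{2j+1}\,a^{2n-(2j+1)}\,b^2 = a^{2n}$ agrees with the relation $ba^{i}=a^{2n-i}b$ for odd $i$ recorded in Section~2; this is indeed what forces $a^{2n}$ into every $Q_j$ and produces the shape of ${\rm N}[a^{2n}]$ in (ii). The only blemish is the parenthetical ``$i\not\equiv 0 \pmod{2n}$'' in your item (v): the statement's range $1\leq i\leq 2n$ includes $a^{4n}b=b$, and your argument covers that case anyway since $b$ lies only in $P_0$; the restriction you wrote is unnecessary rather than wrong.
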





\begin{theorem} \label{Enhanced-SD_8n}
Let $\mathcal{P}_{E}(SD_{8n})$ be the enhanced power graph of the semidihedral group. Then the Sombor spectrum is

\[\displaystyle \begin{pmatrix}
-(4n-1)\sqrt{2} & 0 & -3\sqrt{2} \\
 4n-3 & 2n-1 & n  \\
\end{pmatrix},
\]
and the remaining eigenvalues are the eigenvalues of the following equitable quotient matrix

\begin{equation*}
    \displaystyle
    \begin{bmatrix}
    0 & \sqrt{\alpha^2+\beta^2} & (4n-2)\sqrt{\alpha^2+\gamma^2} & 2\sqrt{\alpha^2+3^2} & \cdots &  2\sqrt{\alpha^2+3^2} &  2n\sqrt{\alpha^2+1}  \\
    \sqrt{\alpha^2+\beta^2} & 0 & (4n-2)\sqrt{\beta^2+\gamma^2} & 2\sqrt{\beta^2+3^2} & \cdots & 2\sqrt{\beta^2+3^2} & 0 \\
    \sqrt{\alpha^2+\gamma^2} & \sqrt{\beta^2+\gamma^2} & (4n-3)(4n-1)\sqrt{2} & 0 & \cdots &  0 & 0  \\
    \sqrt{\alpha^2+3^2} & \sqrt{\beta^2+3^2} & 0 & 3\sqrt{2} & \cdots & 0 & 0 \\
\vdots & \vdots & \vdots & \vdots & \ddots & \vdots &  \vdots \\
     \sqrt{\alpha^2+3^2} & \sqrt{\beta^2+3^2} & 0 & 0 & \cdots & 3\sqrt{2} &  0  \\
   \sqrt{\alpha^2+1} & 0 & 0 & 0 & \cdots & 0 & 0  
    \end{bmatrix}
    ,\end{equation*}
  where $\alpha=(8n-1), \; \beta=(6n-1)$ and $\gamma=(4n-1).$
\end{theorem}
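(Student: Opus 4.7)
The plan is to apply Lemma \ref{multiplicity} together with Theorem \ref{equitable matrix} after choosing an equitable partition of $V(\mathcal{P}_E(SD_{8n}))$ read off from Lemma \ref{EPG-SD8n-NBD}. First, I would record the vertex degrees: $\deg(e)=8n-1=\alpha$, $\deg(a^{2n})=6n-1=\beta$, $\deg(a^i)=4n-1=\gamma$ for $1\le i\le 4n-1$ with $i\ne 2n$, $\deg(a^{2i+1}b)=3$ for $0\le i\le 2n-1$, and $\deg(a^{2i}b)=1$ for $1\le i\le 2n$. Then I would partition $V$ into the classes $\{e\}$, $\{a^{2n}\}$, $C_3=\{a^i:1\le i\le 4n-1,\ i\ne 2n\}$, the $n$ pairs $C_{3+k}=\{a^{2k-1}b,\;a^{2n+2k-1}b\}$ for $1\le k\le n$, and $C_{n+4}=\{a^{2i}b:1\le i\le 2n\}$. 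Using Lemma \ref{EPG-SD8n-NBD}, one checks that inside each class all vertices share a common external neighborhood (and the same degree), so the partition has the form required by Theorem \ref{equitable matrix}.

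Next I would harvest the ``trivial'' eigenvalues. The class $C_3$ is a clique of size $4n-2$ in which every vertex has degree $\gamma=4n-1$, so by Lemma \ref{multiplicity}(ii) it contributes $-(4n-1)\sqrt{2}$ with multiplicity $\ge 4n-3$. Each pair $C_{3+k}$ is a clique of size $2$ with both vertices of degree $3$, so Lemma \ref{multiplicity}(ii) contributes $-3\sqrt{2}$ with multiplicity $\ge 1$ per pair, for a total multiplicity of $\ge n$. Finally $C_{n+4}$ is an independent set of size $2n$ whose vertices all have the same external neighborhood $\{e\}$, so Lemma \ref{multiplicity}(i) yields eigenvalue $0$ with multiplicity $\ge 2n-1$. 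These account for $(4n-3)+n+(2n-1)=7n-4$ eigenvalues, leaving exactly $8n-(7n-4)=n+4$ to be determined.

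For the remaining eigenvalues I would invoke Theorem \ref{equitable matrix}. The off-diagonal block of the Sombor matrix joining class $C_i$ to class $C_j$ is either the zero matrix (if no edges) or the constant $\sqrt{d_i^2+d_j^2}\cdot J_{n_i,n_j}$ (if all vertices of $C_i$ are joined to all vertices of $C_j$), and each diagonal block is $\sqrt{2}\,d_i(J_{n_i}-I_{n_i})$ for a clique class and $0$ for the independent class. So the partition is equitable with quotient entries $b_{ij}=n_j\sqrt{d_i^2+d_j^2}$ when $C_i\sim C_j$ and $b_{ii}=(n_i-1)\sqrt{2}\,d_i$. Reading off the adjacencies from Lemma \ref{EPG-SD8n-NBD} (e.g.\ $C_3$ is joined only to $\{e\}$ and $\{a^{2n}\}$, each pair $C_{3+k}$ is joined only to $\{e\}$ and $\{a^{2n}\}$, and $C_{n+4}$ is joined only to $\{e\}$) and substituting $\alpha,\beta,\gamma$ produces exactly the $(n+4)\times(n+4)$ matrix displayed in the theorem statement.

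The main obstacle is not any hard computation but rather the bookkeeping: one must verify carefully that the proposed partition really is equitable (in particular that the $n$ different pairs behave as separate blocks, rather than being lumped together, since although each contributes the same eigenvalue $-3\sqrt{2}$ to the trivial spectrum, they interact with the rest of the graph identically and thereby lead to identical rows/columns in the quotient matrix — this must be reflected correctly in the stated $(n+4)$-dimensional matrix), and that no adjacency is missed between the $\langle a\rangle$-part and the reflection-part. Once the partition and its adjacency pattern are validated via Lemma \ref{EPG-SD8n-NBD}, the spectrum decomposition in Theorem \ref{equitable matrix} completes the proof.
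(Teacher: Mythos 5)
Your proposal is correct and follows essentially the same route as the paper: both use Lemma \ref{EPG-SD8n-NBD} to read off the degrees and the block structure, apply Lemma \ref{multiplicity} to the clique $K_{4n-2}$, the $n$ cliques $K_2$, and the independent set $\overline{K}_{2n}$ to obtain the $7n-4$ eigenvalues $-(4n-1)\sqrt{2}$, $-3\sqrt{2}$, and $0$, and then obtain the remaining $n+4$ eigenvalues from the $(n+4)\times(n+4)$ equitable quotient matrix via Theorem \ref{equitable matrix}. The only cosmetic difference is that the paper writes out the full block Sombor matrix explicitly before passing to the quotient, whereas you argue directly from the constancy of the blocks; the content is the same.
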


\begin{proof}
First, we arrange the vertices of the enhanced power graph of the group $SD_{8n}$ in a sequence $e, a^{2n}, a, a^2, \ldots, a^{4n -1}$, $ab, a^{2n+1}b, a^3b,a^{2n+3}b, \ldots, a^{2i+1},a^{2n+ 2i+ 1}b,  \ldots, a^{2n-1}b, a^{4n-1}b,a^2b, a^4b, \ldots, a^{2i}b, \ldots, b$. By Lemma \ref{EPG-SD8n-NBD}, we have 
\[\mathcal{P}_{E}(SD_{8n})\cong K_1 \vee \Bigg( \Big\{K_1 \vee \Big[K_{4n-2} \cup \underbrace{K_{2} \cup K_2\cup\cdots \cup K_2}_{n-{\rm times}}\Big]\Big\} \cup \overline{K}_{2n}\Bigg).\]


Note that $d(e)=8n-1$, $d(a^{2n})=6n-1,\; d(a^i)=4n-1$, where $1 \leq i \leq 4n-1$ and $i \neq 2n$; $d(a^{2i + 1}b)=3$ for all $i$, where $0 \leq i \leq 2n-1$; and $d(a^{2i}b)=1$ for all $i$,  where $1 \leq i \leq 2n$ (cf. Lemma \ref{EPG-SD8n-NBD} ). Then the Sombor matrix of the enhanced power graph $\mathcal{P}_{E}(SD_{8n})$ is given below
\[
\displaystyle
\scriptsize
 \begin{bmatrix}
    0 & \sqrt{\alpha^2+\beta^2} & \sqrt{\alpha^2+\gamma^2} J_{1 \times 4n-2} & \sqrt{\alpha^2+3^2} J_{1 \times 2} &  \sqrt{\alpha^2+3^2} J_{1 \times 2} &\cdots&  
 \sqrt{\alpha^2+3^2} J_{1 \times 2} & \sqrt{\alpha^2+1} J_{1 \times 2n}  \\
    \sqrt{\alpha^2+\beta^2} & 0 & \sqrt{\beta^2+\gamma^2}J_{1 \times 4n-2} & \sqrt{\beta^2+3^2} J_{1 \times 2} & \sqrt{\beta^2+3^2} J_{1 \times 2} & \cdots & \sqrt{\beta^2+3^2} J_{1 \times 2} & \mathcal{O}_{1\times 2n} \\
    \sqrt{\alpha^2+\gamma^2}J_{4n-2 \times 1} & \sqrt{\beta^2+\gamma^2}J_{4n-2 \times 1} & \gamma\sqrt{2}(J_{4n-2}-I_{4n-2}) & \mathcal{O}_{4n-2\times 2} & \mathcal{O}_{4n-2\times 2} & \cdots & \mathcal{O}_{4n-2\times 2} & \mathcal{O}_{4n-2\times 2n} \\
    \sqrt{\alpha^2+3^2}J_{2 \times 1} & \sqrt{\beta^2+3^2}J_{2 \times 1} & \mathcal{O}_{2 \times 4n-2} & 3\sqrt{2}(J_{2}-I_{2}) & \mathcal{O}_{2 \times 2} & \cdots & \mathcal{O}_{2 \times 2} & \mathcal{O}_{2 \times 2n} \\
    \sqrt{\alpha^2+3^2}J_{2 \times 1} & \sqrt{\beta^2+3^2}J_{2 \times 1} & \mathcal{O}_{2 \times 4n-2} & \mathcal{O}_{2 \times 2} & 3\sqrt{2}(J_{2}-I_{2}) & \cdots & \mathcal{O}_{2 \times 2} & \mathcal{O}_{2 \times 2n} \\
    \vdots & \vdots & \vdots & \vdots & \vdots & \ddots & \vdots & \vdots \\
     \sqrt{\alpha^2+3^2}J_{2 \times 1} & \sqrt{\beta^2+3^2}J_{2 \times 1} & \mathcal{O}_{2 \times 4n-2} & \mathcal{O}_{2 \times 2} & \mathcal{O}_{2 \times 2} & \cdots & 3\sqrt{2}(J_{2}-I_{2}) & \mathcal{O}_{2 \times 2n} \\
 \sqrt{\alpha^2+1^2}J_{2n \times 1} & \mathcal{O}_{2n\times 1} & \mathcal{O}_{2n\times (4n-2)} & \mathcal{O}_{2n\times 2} & \mathcal{O}_{2n\times 2} & \cdots & \mathcal{O}_{2n\times 2} & \mathcal{O}_{2n\times 2n}  \\
\end{bmatrix}
,\]

where $J$ denotes the matrix in which each entry is one, $\mathcal{O}$ represent the zero matrix and $I$ is the identity matrix. Now, $K_{4n-2}, K_2, K_2,\ldots, K_2$  form cliques of order $4n-2$ and $2$, respectively and each vertex of these cliques share the same neighborhood. In view of Lemma \ref{multiplicity}, we have $-(4n-1)\sqrt{2}$ and $-3\sqrt{2}$ are the Sombor eigenvalues with multiplicities $4n-3$ and $n$, respectively. Also, $\overline{K}_{2n}$ forms an independent set and each vertex of this set shares the same neighborhood in the graph. Therefore, $0$ is an eigenvalue of $\mathcal{P}_E(SD_{8n})$ with multiplicity $2n-1$ (see  Lemma \ref{multiplicity}). Thus,  $7n-4$ eigenvalues are given below 
\[\displaystyle \begin{pmatrix}
-(4n-1)\sqrt{2} & 0 & -3\sqrt{2} \\
 4n-3 & 2n-1 & n  \\
\end{pmatrix},
\]
and the remaining $n + 4$ eigenvalues are the eigenvalues of the following equaitable quotient matrix
\[\begin{bmatrix}
    0 & \sqrt{\alpha^2+\beta^2} & (4n-2)\sqrt{\alpha^2+\gamma^2} & 2\sqrt{\alpha^2+3^2} & \cdots &  2\sqrt{\alpha^2+3^2} &  2n\sqrt{\alpha^2+1}  \\
    \sqrt{\alpha^2+\beta^2} & 0 & (4n-2)\sqrt{\beta^2+\gamma^2} & 2\sqrt{\beta^2+3^2} & \cdots & 2\sqrt{\beta^2+3^2} & 0 \\
    \sqrt{\alpha^2+\gamma^2} & \sqrt{\beta^2+\gamma^2} & (4n-3)(4n-1)\sqrt{2} & 0 & \cdots &  0 & 0  \\
    \sqrt{\alpha^2+3^2} & \sqrt{\beta^2+3^2} & 0 & 3\sqrt{2} & \cdots & 0 & 0 \\
\vdots & \vdots & \vdots & \vdots & \ddots & \vdots &  \vdots \\
     \sqrt{\alpha^2+3^2} & \sqrt{\beta^2+3^2} & 0 & 0 & \cdots & 3\sqrt{2} &  0  \\
   \sqrt{\alpha^2+1} & 0 & 0 & 0 & \cdots & 0 & 0  
    \end{bmatrix},\]
 where $\alpha=(8n-1), \; \beta=(6n-1)$ and $\gamma=(4n-1).$
\end{proof}

\begin{remark} For any finite group $G$, the order super enhanced power graph $\mathcal{P}_E^o(G)$ is equal to the order super commuting graph $\Delta^o(G)$ (see \cite{arunkumar2022super}). The Sombor spectrum of order super commuting graph $\Delta^o(G)$ has already been discussed in Subsection $4.2$. 
\end{remark}
 
\subsection{Conjugacy Super Enhanced Power Graph}
In this subsection, we discuss the Sombor spectrum of conjugacy super enhanced power graph of $D_{2n}, Q_{4n}, SD_{8n}.$

\begin{corollary} \label{conjugacySE-D_2n}
Let $\mathcal{P}_E^c(D_{2n})$ be the conjugacy super enhanced power graph of the dihedral group $D_{2n}$. 
\begin{itemize}
    \item [(i)] If $n$ is odd, then the Sombor spectrum of conjugacy super enhanced power graph $\mathcal{P}_E^c(D_{2n})$ is 
    \[\displaystyle \begin{pmatrix}
-(n-1)\sqrt{2} & -n\sqrt{2} \\
 n-2 & n-1 \\
\end{pmatrix},
\]
\noindent and the remaining eigenvalues are the roots of the following polynomial\\  
$x  \Big[x - (n-1)(n-2)\sqrt{2}\Big] \Big[x - n(n-1)\sqrt{2}\Big]
 - (n-1)(5n^2 -6n +2)  \Big[x - n(n-1)\sqrt{2}\Big]
 - n(5n^2 -4n + 1) \times$\\
 \\
 $\Big[x - (n-1)(n-2)\sqrt{2}\Big].$  
\vspace{0.2cm}
 \item[(ii)] If $n$ is even, then the Sombor spectrum of conjugacy super enhanced power graph $\mathcal{P}_E^c(D_{2n})$ is 
     \[\displaystyle \begin{pmatrix}
-\left(\frac{n}{2}\right)\sqrt{2} & -(n-1)\sqrt{2} & \frac{n(n-2)}{4}\\
 n-2 & n-2 & 1 \\
\end{pmatrix},\]
and the remaining three eigenvalues are the roots of the following polynomial\\
 $x \Big[x - (n-1)(n-2)\sqrt{2}\Big] \Big[\frac{4x - n(n-2)\sqrt{2}}{4}\Big]
 - (n-1)(5n^2 -6n +2) \Big[\frac{4x - n(n-2)\sqrt{2}}{4}\Big]
 - \frac{n}{4} (17n^2 -16n +4)\times$\\
 \\
 $\Big[x - (n-2)(n-1)\sqrt{2}\Big].$
\end{itemize}
\end{corollary}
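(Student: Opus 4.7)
The plan is to first identify the structure of $\mathcal{P}_E^c(D_{2n})$ as a generalized join of complete graphs, and then read off the Sombor spectrum by applying Corollary \ref{Generalize join}.

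I would begin with the structure of $\mathcal{P}_E(D_{2n}) = K_{1,2}[K_1,K_{n-1},\underbrace{K_1,\ldots,K_1}_{n\text{-times}}]$ recorded in the text: the identity $e$ is adjacent to every other vertex, the non-identity rotations form a clique inside $\langle a\rangle$, and each reflection $a^i b$ has neighborhood exactly $\{e\}$ because $\langle a^i b\rangle = \{e,a^i b\}$. In particular, no two reflections are adjacent and no reflection is adjacent to any non-identity rotation in $\mathcal{P}_E$. Passing to $\mathcal{P}_E^c$ then amounts to thickening each conjugacy class of $D_{2n}$ into a clique and inserting an edge between two classes precisely when there is an edge between their representatives in $\mathcal{P}_E$.

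For $n$ odd, the reflections form a single conjugacy class, yielding a clique $K_n$ joined only to $\{e\}$, while the non-identity rotations give $K_{n-1}$ joined to $\{e\}$. Thus $\mathcal{P}_E^c(D_{2n}) \cong K_{1,2}[K_1,K_{n-1},K_n]$, which coincides with $\Delta^c(D_{2n})$ for $n$ odd, so the spectrum is already supplied by Corollary \ref{ConjugacySC-D_2n}(i). Equivalently, applying Corollary \ref{Generalize join} with $k=3$, $(n_1,n_2,n_3)=(1,n-1,n)$ and degrees $(d_1,d_2,d_3)=(2n-1,n-1,n)$, Lemma \ref{multiplicity} contributes $-(n-1)\sqrt{2}$ with multiplicity $n-2$ and $-n\sqrt{2}$ with multiplicity $n-1$, and the $3\times 3$ quotient-matrix determinant yields the cubic factor in the statement.

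For $n$ even, the reflections split into $R_0=\{a^{2i}b\}$ and $R_1=\{a^{2i+1}b\}$, each of size $n/2$. Since reflections are pairwise non-adjacent in $\mathcal{P}_E$, $R_0$ and $R_1$ are not joined to each other; and since $a^{n/2}$ (though central) does not lie in any $\langle a^i b\rangle = \{e,a^i b\}$, it is not adjacent to any reflection, so $\{a^{n/2}\}$ merges with the other non-identity rotation classes. Consequently $R_0$ and $R_1$ each form $K_{n/2}$ joined only to $\{e\}$, the non-identity rotations form $K_{n-1}$ joined to $\{e\}$, and $\mathcal{P}_E^c(D_{2n}) \cong K_{1,3}[K_1,K_{n-1},K_{n/2},K_{n/2}]$. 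Applying Corollary \ref{Generalize join} with $k=4$, $(n_1,n_2,n_3,n_4)=(1,n-1,n/2,n/2)$ and $d_1=2n-1$, $d_2=n-1$, $d_3=d_4=n/2$, Lemma \ref{multiplicity} produces $-(n-1)\sqrt{2}$ with multiplicity $n-2$ and $-\tfrac{n}{2}\sqrt{2}$ with total multiplicity $(n/2-1)+(n/2-1)=n-2$ from the two reflection blocks. The exchange symmetry between $R_0$ and $R_1$ furnishes an eigenvector equal to $+1$ on $R_0$, $-1$ on $R_1$, and $0$ elsewhere; a direct row-sum check shows that its eigenvalue is $(n/2-1)(n/2)\sqrt{2}=\tfrac{n(n-2)}{4}\sqrt{2}$, which therefore factors cleanly out of the $4\times 4$ quotient matrix's characteristic polynomial and leaves the stated cubic for the remaining three eigenvalues.

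The main obstacle is the $n$ even case. One must verify the subtlety that $a^{n/2}$, despite being central, does \emph{not} join either reflection block in $\mathcal{P}_E^c$, so $R_0$ and $R_1$ really are separate $K_{n/2}$ blocks attached only to $\{e\}$; and then one must exploit the $R_0\leftrightarrow R_1$ symmetry to isolate the eigenvalue $\tfrac{n(n-2)}{4}\sqrt{2}$ before expanding the residual cubic. The $n$ odd case and the Lemma \ref{multiplicity} contributions in both parities are then routine bookkeeping.
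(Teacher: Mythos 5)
Your proposal is correct and follows essentially the same route as the paper: identify the conjugacy classes, deduce that $\mathcal{P}_E^c(D_{2n})$ is $K_{1,2}[K_1,K_{n-1},K_n]$ for odd $n$ and $K_{1,3}[K_1,K_{n-1},K_{n/2},K_{n/2}]$ for even $n$ (with $a^{n/2}$ absorbed into the rotation block), and then apply Corollary \ref{Generalize join}; your extra eigenvector argument for the $R_0\leftrightarrow R_1$ symmetry is a valid alternative to simply factoring the quotient-matrix polynomial. Note that your value $\tfrac{n(n-2)}{4}\sqrt{2}$ for that eigenvalue is the correct one, consistent with the cubic factor $\tfrac{4x-n(n-2)\sqrt{2}}{4}$; the missing $\sqrt{2}$ in the statement's spectrum table is a typographical slip.
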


\begin{proof} As we know the conjugacy classes of a dihedral group $D_{2n}$ are
\[\{e\}, \{a, a^{n-1}\}, \{a^2, a^{n-2}\}, \ldots, \{a^{\frac{n-1}{2}}, a^{\frac{n+1}{2}}\},\{b,ab, a^2b, \ldots,  a^{n-1}b\}~ \text{whenever}~ n~ \text{is~ odd}, \]
otherwise
\[\{e\}, \{ a^{\frac{n}{2}}\}, \{a, a^{n-1}\}, \{a^2, a^{n-2}\}, \ldots, \{a^{\frac{n-1}{2}}, a^{\frac{n+1}{2}}\},\{b,ab^3, a^5b, \ldots,  a^{n-1}b\}, \{a^2b, a^4b, \ldots,  a^{n-2}b\}. \]
For each conjugacy class, the subgraph induced by the vertices belonging to that conjugacy class is complete. In view of the structure of dihedral group $D_{2n}$ (see Equation \ref{eq(1)}), we get
\[
\mathcal{P}_E^c(D_{2n})= 
\begin{dcases}
K_{1,2}[K_1, K_{n-1},K_n],& \text{if $n$ is odd};\\
K_{1,3}[K_1, K_{n-1},K_{\frac{n}{2}},K_{\frac{n}{2}}],& \text{if $n$ is even}.\\
\end{dcases}
\]
By Corollary \ref{Generalize join}, we obtain the required result.
\end{proof}

\begin{corollary} \label{conjugacySE-Q_4n}
Let $\mathcal{P}_E^c(Q_{4n})$ be the conjugacy super enhanced power graph of the generalized quaternion group. 
\begin{itemize}
    \item [(i)] If $n$ is odd, then the Sombor spectrum of conjugacy super enhanced power graph $\mathcal{P}_E^c(Q_{4n})$ is  
    \[\displaystyle \begin{pmatrix}
-(4n-1)\sqrt{2} & -(2n-1)\sqrt{2} & -(2n+1)\sqrt{2} \\
 1 & 2n-3 & 2n-1  \\
\end{pmatrix},
\]
and the remaining eigenvalues are the roots of the following polynomial\\ 
$\Big[x-(4n-1)\sqrt{2}\Big] \Big[x - (2n-1)(2n-3)\sqrt{2}\Big] \Big[x - (2n-1)(2n+1)\sqrt{2}\Big]
 - 8(n-1) (10n^2 -6n + 1) \Big[x - (2n-1)(2n+1)\sqrt{2}\Big]$\\
\\
$-8n(10n^2 -2n + 1) \Big[x - (2n-1)(2n-3)\sqrt{2}\Big].$ 

\vspace{0.2cm}
\item[(ii)] If $n$ is even, then the Sombor spectrum of conjugacy super enhanced power graph $\mathcal{P}_E^c(Q_{4n})$ is
     \[\displaystyle \begin{pmatrix}
-(4n-1)\sqrt{2} & -(2n-1)\sqrt{2} & -(n+1)\sqrt{2} & (n^2-1)\sqrt{2} \\
 1 & 2n-3 & 2n-2 & 1  \\
\end{pmatrix},\]
and the remaining eigenvalues are roots of the following polynomial\\
$\Big[x - (4n-1)\sqrt{2}\Big] \Big[x - (n-1)(n+1)\sqrt{2}\Big] \Big[x - (2n-1)(2n-3)\sqrt{2}\Big]
 - 8(n-1) (10n^2 -6n + 1) \Big[x - (n-1)(n+1)\sqrt{2}\Big]$\\
 \\$- 4n(17n^2 -6n +2) \Big[x - (2n-3)(2n-1)\sqrt{2}\Big]$.
\end{itemize}
\end{corollary}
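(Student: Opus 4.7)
The plan is to identify $\mathcal{P}_E^c(Q_{4n})$ with an explicit generalized join and then apply Corollary \ref{Generalize join}. First I would observe that in $Q_{4n}$ every pair of commuting elements lies in a common cyclic subgroup, because the maximal abelian subgroups of $Q_{4n}$ are $\langle a\rangle$ of order $2n$ together with the cyclic subgroups $\langle a^ib\rangle = \{e, a^ib, a^n, a^{n+i}b\}$ of order four. Hence $\mathcal{P}_E(Q_{4n}) = \Delta(Q_{4n})$, and consequently $\mathcal{P}_E^c(Q_{4n}) = \Delta^c(Q_{4n})$. Using the structural description recorded earlier in the excerpt for $\Delta^c(Q_{4n})$,
\[
\mathcal{P}_E^c(Q_{4n}) \cong \begin{cases} K_{1,2}[K_2, K_{2n-2}, K_{2n}], & n \text{ odd},\\ K_{1,3}[K_2, K_{2n-2}, K_n, K_n], & n \text{ even},\end{cases}
\]
which reduces each case to a direct application of Corollary \ref{Generalize join}.

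For part (i), I would apply Corollary \ref{Generalize join} with $k=3$, sizes $(n_1,n_2,n_3)=(2, 2n-2, 2n)$ and induced degrees $d_1=4n-1$, $d_2=2n-1$, $d_3=2n+1$. Lemma \ref{multiplicity} then yields the clique eigenvalues $-(4n-1)\sqrt{2}$, $-(2n-1)\sqrt{2}$, $-(2n+1)\sqrt{2}$ with multiplicities $1$, $2n-3$, $2n-1$, while the three remaining eigenvalues are the roots of $\chi(N,x)$. A short calculation gives $n_1 n_2(d_1^2+d_2^2) = 8(n-1)(10n^2-6n+1)$ and $n_1 n_3(d_1^2+d_3^2) = 8n(10n^2-2n+1)$, and expanding $\chi(N,x)$ along the first row produces exactly the cubic stated.

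For part (ii), I would apply the same corollary with $k=4$ and $(n_1,n_2,n_3,n_4)=(2,2n-2,n,n)$, so that $d_1=4n-1$, $d_2=2n-1$, $d_3=d_4=n+1$. The stated clique eigenvalues $-(4n-1)\sqrt{2}$, $-(2n-1)\sqrt{2}$, $-(n+1)\sqrt{2}$ appear with multiplicities $1$, $2n-3$, $2n-2$, the last multiplicity being $(n-1)+(n-1)$ contributed by the two copies of $K_n$. The remaining four eigenvalues are those of the $4\times 4$ matrix $N$ from Corollary \ref{Generalize join}. The key observation is that the symmetry $n_3=n_4$ and $d_3=d_4$ makes $(0,0,1,-1)^{\top}$ an eigenvector of $N$: a direct multiplication shows $N(0,0,1,-1)^{\top} = (n-1)d_3\sqrt{2}\,(0,0,1,-1)^{\top} = (n^2-1)\sqrt{2}\,(0,0,1,-1)^{\top}$. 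Factoring $x-(n^2-1)\sqrt{2}$ out of $\chi(N,x)$ then produces the cubic displayed in the statement.

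I expect the main technical step to be the polynomial division in part (ii): although the eigenvector argument makes $(n^2-1)\sqrt{2}$ transparent, one must still carry out the factorization and identify the resulting cubic with the expression in the corollary. The computations $n_1 n_3(d_1^2+d_3^2) = n_1 n_4(d_1^2+d_4^2) = 2n(17n^2-6n+2)$ together with the doubling that produces the coefficient $4n(17n^2-6n+2)$ in the final cubic must be tracked carefully, but once the symmetry of $N$ is exploited the remainder of the argument is mechanical and parallels the proof of Corollary \ref{ConjugacySC-Q_4n}.
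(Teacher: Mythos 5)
Your proposal is correct, and its endgame --- reading off the clique eigenvalues from Lemma \ref{multiplicity} and the remaining ones from the quotient matrix of Corollary \ref{Generalize join}, with the verified coefficients $8(n-1)(10n^2-6n+1)$, $8n(10n^2-2n+1)$ and $2n(17n^2-6n+2)$ --- is exactly the computation the paper performs. Where you differ is in how you justify the generalized-join decomposition: the paper lists the conjugacy classes of $Q_{4n}$ and combines them with the known block structure of $\mathcal{P}_E(Q_{4n})$ to write $\mathcal{P}_E^c(Q_{4n})$ as $K_{1,2}[K_2,K_{2n-2},K_{2n}]$ (respectively $K_{1,3}[K_2,K_{2n-2},K_n,K_n]$), whereas you first prove $\mathcal{P}_E(Q_{4n})=\Delta(Q_{4n})$ via the fact that every maximal abelian subgroup of $Q_{4n}$ is cyclic, conclude $\mathcal{P}_E^c(Q_{4n})=\Delta^c(Q_{4n})$, and import the decomposition already recorded for $\Delta^c(Q_{4n})$ in Section 4.3. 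Your route is slightly slicker: it explains structurally why this corollary and Corollary \ref{ConjugacySC-Q_4n} give identical spectra (in principle you could simply cite that corollary and only add the extraction of the eigenvalue $(n^2-1)\sqrt{2}$), and your eigenvector argument with $(0,0,1,-1)^{\top}$ makes the factor $x-(n^2-1)\sqrt{2}$ in part (ii) transparent, a step the paper leaves implicit in ``we obtain the required result.'' The paper's route is more self-contained and generalizes to groups where the enhanced power graph and commuting graph differ. As a side remark, your labels $K_{1,2}$ (three blocks, $n$ odd) and $K_{1,3}$ (four blocks, $n$ even) are the consistent ones; the displayed structure in the paper's proof has these two subscripts interchanged, evidently a typo.
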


\begin{proof} As we know the conjugacy classes of a dihedral group $Q_{4n}$ are
\begin{itemize}
\item $[e]_c = \{e\}; [a^n]_c = \{a^n\}$;
\item for $0 < i < 2n$ and $i \neq n$, we have  $[a^i]_c = \{a^{i}, a^{-i}\}$;
\item $[a^2b]_c = \{a^{2i}b : 0 \leq i < n\}$;
\item $[ab]_c = \{a^{2i + 1}b : 0 \leq i < n\}$.
\end{itemize}
For each conjugacy class, the subgraph induced by the vertices belonging to that conjugacy class is complete. In view of the structure of dicyclic group $Q_{4n}$ (see Equation \ref{eq(2)}), we get
\[
\mathcal{P}_E^c(Q_{4n})= 
\begin{dcases}
K_{1,3}[K_2,K_{2n-2},K_{2n}],& \text{if $n$ is odd};\\
K_{1,2}[K_2,K_{2n-2},K_{n},K_{n} ] ,& \text{if $n$ is even}.
\end{dcases}
\]
By Corollary \ref{Generalize join}, we obtain the required result.
\end{proof}







\begin{theorem}\label{conjugacySE-SD_8n}
Let $\mathcal{P}_E^c(SD_{8n})$ be the conjugacy super enhanced power graph of the semidihedral group.  
\begin{itemize}
\item[(i)] If $n$ is odd, then the Sombor spectrum of conjugacy super enhanced power graph $\mathcal{P}_E^c(SD_{8n})$ is 
     \[\displaystyle \begin{pmatrix}
-(2n+1)\sqrt{2} & -(4n-1)\sqrt{2} & -n\sqrt{2}\\
 2n-1 & 4n-3  & 2n-2 \\
\end{pmatrix},\]
and the remaining eigenvalues are the eigenvalues of the following equitable  quotient matrix 
\[\begin{bmatrix}
    0 & \sqrt{{\alpha}^2+{\beta}^2} & (4n-2)\sqrt{{\alpha}^2+{\gamma}^2}  & (2n)\sqrt{{\alpha}^2+{\delta}^2}  & n\sqrt{{\alpha}^2+ {\psi}^2}  & n\sqrt{{\alpha}^2+{\psi}^2}  \\
    \sqrt{{\alpha}^2+{\beta}^2} & 0 & (4n-2)\sqrt{{\beta}^2+{\gamma}^2} & (2n)\sqrt{{\beta}^2+{\delta}^2}  & 0 & 0 \\
    \sqrt{{\alpha}^2+{\gamma}^2} & \sqrt{{\beta}^2+{\gamma}^2} & (4n-1)(4n-3)\sqrt{2} & 0 & 0 & 0 \\
    \sqrt{{\alpha}^2+{\delta}^2} & \sqrt{{\beta}^2+{\delta}^2} & 0 & (2n-1)(2n+1)\sqrt{2} & 0 & 0 \\
 \sqrt{{\alpha}^2+{\psi}^2} & 0 & 0 & 0 & n(n-1)\sqrt{2} & 0 \\
 \sqrt{{\alpha}^2+{\psi}^2} & 0 & 0 & 0 & 0 & n(n-1)\sqrt{2} 
\end{bmatrix}
,\]
where $\alpha = 8n-1,~ \beta=6n-1,~ \gamma=4n-1,~ \delta=2n+1$ and $\psi=n.$
\item[(ii)] If $n$ is even, then the Sombor spectrum of conjugacy super enhanced power graph $\mathcal{P}_E^c(SD_{8n})$ is 
    \[\displaystyle \begin{pmatrix}
-(4n-1)\sqrt{2} & -(2n+1)\sqrt{2} & -2n\sqrt{2} \\
 4n-3 & 2n-1 & 2n-1\\
\end{pmatrix},
\]
and the remaining eigenvalues are the eigenvalues of the following equitable quotient matrix
\[\begin{bmatrix}
    0 & \sqrt{\alpha^2+\beta^2} & (4n-2)\sqrt{\alpha^2+\gamma^2}  & 2n\sqrt{\alpha^2+\delta^2}  & 2n\sqrt{\alpha^2+\psi^2} \\
    \sqrt{\alpha^2+\beta^2} & 0 & (4n-2)\sqrt{\beta^2+\gamma^2} & 2n\sqrt{\beta^2+\delta^2}  & 0 \\
    \sqrt{\alpha^2+\gamma^2} & \sqrt{\beta^2+\gamma^2} & (4n-1)(4n-3)\sqrt{2} & 0 & 0 \\
    \sqrt{\alpha^2+\delta^2} & \sqrt{\beta^2+\delta^2} & 0 & (2n-1)(2n+1)\sqrt{2} & 0  \\
 \sqrt{\alpha^2+\psi^2} & 0 & 0 & 0 & 2n(2n-1)\sqrt{2} 
\end{bmatrix},\]
where $\alpha = 8n-1,~ \beta=6n-1,~ \gamma=4n-1,~ \delta=2n+1$ and $\psi= 2n.$
\end{itemize}
\end{theorem}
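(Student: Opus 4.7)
The plan is to realize $\mathcal{P}_E^c(SD_{8n})$ through an equitable partition into cliques and then combine Lemma \ref{multiplicity} with Theorem \ref{equitable matrix} to read off the spectrum. The raw ingredients are Lemma \ref{EPG-SD8n-NBD} (neighborhoods in the ordinary enhanced power graph) and the conjugacy-class structure of $SD_{8n}$, which I would compute first.

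From $ba^ib^{-1}=a^{(2n-1)i}$ one gets $[a^i]_c=\{a^i,a^{(2n-1)i}\}$; this is a singleton iff $(n-1)i\equiv 0\pmod{2n}$. For odd $n$ this forces $i\in\{0,n,2n,3n\}$, giving four central singletons and $2n-2$ pair-classes in $\langle a\rangle$; for even $n$ only $\{e\},\{a^{2n}\}$ are singletons and the remaining cyclic elements form $2n-1$ pair-classes (including $\{a^n,a^{3n}\}$). Conjugation of a reflection by $a^j$ sends $a^ib\mapsto a^{i-2(n-1)j}b$, and since $\gcd(2(n-1),4n)=4$ when $n$ is odd and $=2$ when $n$ is even, the reflections split into four classes of size $n$ (odd $n$, indexed by exponent $\pmod 4$) or two classes of size $2n$ (even $n$, indexed by parity of exponent). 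The crucial qualitative consequence is that in either parity the set of all odd reflections, and the set of all even reflections, is a union of conjugacy classes.

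Using these conjugacy classes together with Lemma \ref{EPG-SD8n-NBD} I would then compute neighborhoods in $\mathcal{P}_E^c(SD_{8n})$. Since every conjugacy class inside $\langle a\rangle$ stays inside $\langle a\rangle$, each non-central $a^i\in\langle a\rangle$ has $N(a^i)=\langle a\rangle\setminus\{a^i\}$ in $\mathcal{P}_E^c$, giving degree $4n-1$; the elements $e,a^{2n}$ keep their $\mathcal{P}_E$ degrees $8n-1,6n-1$. An odd reflection $a^{2i+1}b$ is $\mathcal{P}_E$-adjacent to $a^{2n+2i+1}b$, which lies in the \emph{other} odd-reflection class when $n$ is odd (because $2n\equiv 2\pmod 4$) and in the same class when $n$ is even; either way the conjugacy bridge makes $a^{2i+1}b$ adjacent in $\mathcal{P}_E^c$ to every other odd reflection, yielding degree $2n+1$. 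An even reflection $a^{2i}b$ is $\mathcal{P}_E$-adjacent only to $e$, so in $\mathcal{P}_E^c$ it reaches only $\{e\}$ and its own conjugacy class, giving degree $n$ for odd $n$ and $2n$ for even $n$. The equitable partitions are now visible: for odd $n$ the six blocks $\{e\},\{a^{2n}\},\langle a\rangle\setminus\{e,a^{2n}\},\{\text{odd refl.}\},C_1,C_2$ (with $C_1,C_2$ the two even-reflection classes of size $n$); for even $n$ the five blocks $\{e\},\{a^{2n}\},\langle a\rangle\setminus\{e,a^{2n}\},\{\text{odd refl.}\},\{\text{even refl.}\}$. Each block is a clique whose vertices share a common external neighborhood.

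Lemma \ref{multiplicity} then immediately contributes, per block $S$ of size $\geq 2$ and degree $d$, the eigenvalue $-d\sqrt 2$ with multiplicity $|S|-1$. This produces $-(4n-1)\sqrt 2$ with multiplicity $4n-3$ from the cyclic block, $-(2n+1)\sqrt 2$ with multiplicity $2n-1$ from the odd-reflection block, and either $-n\sqrt 2$ with multiplicity $2(n-1)$ (odd $n$, two blocks of size $n$) or $-2n\sqrt 2$ with multiplicity $2n-1$ (even $n$, one block of size $2n$), exactly matching (i)--(ii). The remaining $6$ (resp.\ $5$) eigenvalues are read off from the equitable quotient matrix of Theorem \ref{equitable matrix}, whose diagonal entries are $(n_i-1)d_i\sqrt 2$ and whose off-diagonal entries are $n_j\sqrt{d_i^2+d_j^2}$ whenever blocks $i$ and $j$ are joined. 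Writing the connection pattern ($e$ to all; $a^{2n}$ to the cyclic block and to odd reflections; odd reflections back only to $\{e\},\{a^{2n}\}$; even-reflection blocks back only to $\{e\}$) and inserting the degrees $\alpha=8n-1,\beta=6n-1,\gamma=4n-1,\delta=2n+1,\psi\in\{n,2n\}$ reproduces precisely the displayed $6\times 6$ and $5\times 5$ matrices.

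The main obstacle is the combinatorial verification in the reflection analysis for odd $n$: one must check that the two odd-reflection conjugacy classes really do collapse into a single equitable clique block (via the $a^{2i+1}b\sim a^{2n+2i+1}b$ bridge in $\mathcal{P}_E$), while the two even-reflection classes cannot be merged because no even reflection is $\mathcal{P}_E$-adjacent to anything outside $\{e\}$ and its own class. Getting this distinction right explains why the odd case has six blocks (not four) and the even case has five blocks (not four), and once it is established the spectrum follows mechanically from Lemma \ref{multiplicity} and Theorem \ref{equitable matrix}.
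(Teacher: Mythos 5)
Your proposal is correct and follows essentially the same route as the paper: identify the conjugacy classes of $SD_{8n}$, use them together with Lemma \ref{EPG-SD8n-NBD} to realize $\mathcal{P}_E^c(SD_{8n})$ as a join of cliques with the stated degrees, and then combine Lemma \ref{multiplicity} with Theorem \ref{equitable matrix}. Your extra verification that the two odd-reflection classes merge via the $a^{2i+1}b\sim a^{2n+2i+1}b$ bridge while the even-reflection classes stay separate is exactly the point the paper leaves implicit, and your multiplicity count $2(n-1)$ for $-n\sqrt{2}$ in the odd case agrees with the theorem statement.
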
 

\begin{proof}(i) In order to obtain the structure of conjugacy super enhanced power graph $P_E^c(SD_{8n})$, it is known that the conjugacy classes of semidihedral group $SD_{8n}$ are
\begin{itemize}
\item $[e]_c = \{e\}$;
\item $[a^{n}]_c = \{a^{n}\}$;
\item $[a^{2n}]_c = \{a^{2n}\}$;
\item $[a^{3n}]_c = \{a^{3n}\}$;
\item for $1 \leq i \leq 2n + 1$ and $i \neq \{n, 2n-1, 2n\}$, we have $[a^i]_c = \begin{dcases}
\{a^i, a^{4n -i}\},& \text{ if $i$ is even};\\
\{a^i, a^{2n -i}\},& \text{if $i$ is odd};
\end{dcases}$

\item $[b]_c = \{a^{4k}b:~ 0 \leq k \leq n-1 \}$;

\item $[ab]_c = \{a^{4k + 1}b:~ 0 \leq k \leq n-1 \}$;

\item $[a^2b]_c = \{a^{4k + 2}b:~ 0 \leq k \leq n-1 \}$;

\item $[a^3b]_c = \{a^{4k + 3}b:~ 0 \leq k \leq n-1 \}$.
\end{itemize}
For each conjugacy class,  the subgraph induced by the vertices belonging to that conjugacy class is complete. In view of the structure of the enhanced power graph $P_E(SD_{8n})$ given in the proof of Theorem \ref{Enhanced-SD_8n}, we have
\[\mathcal{P}_E^c(SD_{8n})= K_1\lor \Big(\Big[K_1\lor(K_{4n-2}\cup K_{2n})\Big]\cup K_n\cup K_n \Big).\]
\noindent Now, we arrange the vertices of the conjugacy super enhanced power graph $P_E^c(SD_{8n})$ in a sequence $e, a^{2n}, a, a^2$\\
$,\ldots, a^{2n-1}, a^{2n + 1}, \ldots, a^{4n -1}, ab, a^{2n +1}b, a^3b, a^{2n +3}b, \ldots, a^{2n -1}b, a^{4n -1}b, a^2b, a^6b, a^{10}b, \ldots, a^{2n}b, a^4b, a^8b, \ldots,b.$ Further, note that $d(e) = 8n-1;~ d(a^{2n}) = 6n - 1;~ d(a^i) = 4n - 1$ for all $1 \leq i \leq 4n -1$ and $i \neq 2n;~ d(a^{2i + 1}b) = 2n + 1$ for all $0 \leq i \leq 2n -1$ and $d(a^{2i}b) = n$ for all $0 \leq i \leq 2n -2$. Then the Sombor matrix of $\mathcal{P}_E^c(SD_{8n})$ is
\[
 \displaystyle
\begin{bmatrix}
    0 & \sqrt{\alpha^2+\beta^2} & \sqrt{{\alpha}^2+{\gamma}^2} J_{1 \times 4n-2} & \sqrt{\alpha^2+\delta^2} J_{1 \times 2n} & \sqrt{\alpha^2+\psi^2} J_{1 \times 2n} \\
    \sqrt{\alpha^2+\beta^2} & 0 & \sqrt{\beta^2+\gamma^2}J_{1 \times 4n-2} & \sqrt{\beta^2+\delta^2} J_{1 \times 2n} & \mathcal{O}_{1\times 2n} \\
    
    \sqrt{{\alpha}^2+{\gamma}^2}J_{4n-2 \times 1} & \sqrt{\beta^2+\gamma^2}J_{4n-2 \times 1} & \gamma \sqrt{2}(J_{4n-2}-I_{4n-2}) & \mathcal{O}_{4n-2\times 2n} & \mathcal{O}_{4n-2\times 2n}\\
    \sqrt{\alpha^2+\delta^2}J_{2n \times 1} & \sqrt{\beta^2+\delta^2}J_{2n \times 1} & \mathcal{O}_{2n\times 4n-2} & \delta\sqrt{2}(J_{2n}-I_{2n}) & \mathcal{O}_{2n \times 2n}\\
 \sqrt{\alpha^2+\psi^2}J_{2n \times 1} & \mathcal{O}_{2n \times 1} & \mathcal{O}_{2n \times 4n-2} & \mathcal{O}_{2n\times 2n} & \psi \sqrt{2}(J_{2n}-I_{2n})\\
  \end{bmatrix}
,\]

where $J$ denotes the matrix in which each entry is one, $\mathcal{O}$ represents the zero matrix, and $I$ is the identity matrix with
$\alpha = 8n-1, \beta = 6n-1, \gamma = 4n-1, \delta = 2n+1$ and $\psi = n.$
Now, $K_{4n-2}, K_{2n}$ and $K_{2n}$ form cliques of orders $4n-2$ and $2n$(2 times), respectively, and each vertex of these cliques share the same neighborhood. By Lemma $2.4$, we have $-(4n-1)\sqrt{2}, -(2n+1)\sqrt{2}$ and $-2n\sqrt{2}$ are the Sombor eigenvalues with multiplicities $4n-3, 2n-1$ and $2n-1$, respectively. Therefore, $8n-5$ eigenvalues are known. 

\[\displaystyle \begin{pmatrix}
-(4n-1)\sqrt{2} & -(2n+1)\sqrt{2} & -n\sqrt{2} \\
 4n-3 & 2n-1 & 2n-1  \\
\end{pmatrix},
\]
and the remaining eigenvalues are the eigenvalues of the following equitable quotient matrix.

\[\begin{bmatrix}
    0 & \sqrt{{\alpha}^2+{\beta}^2} & (4n-2)\sqrt{{\alpha}^2+{\gamma}^2}  & (2n)\sqrt{{\alpha}^2+{\delta}^2}  & n\sqrt{{\alpha}^2+ {\psi}^2}  & n\sqrt{{\alpha}^2+{\psi}^2}  \\
    \sqrt{{\alpha}^2+{\beta}^2} & 0 & (4n-2)\sqrt{{\beta}^2+{\gamma}^2} & (2n)\sqrt{{\beta}^2+{\delta}^2}  & 0 & 0 \\
    \sqrt{{\alpha}^2+{\gamma}^2} & \sqrt{{\beta}^2+{\gamma}^2} & (4n-1)(4n-3)\sqrt{2} & 0 & 0 & 0 \\
    \sqrt{{\alpha}^2+{\delta}^2} & \sqrt{{\beta}^2+{\delta}^2} & 0 & (2n-1)(2n+1)\sqrt{2} & 0 & 0 \\
 \sqrt{{\alpha}^2+{\psi}^2} & 0 & 0 & 0 & n(n-1)\sqrt{2} & 0 \\
 \sqrt{{\alpha}^2+{\psi}^2} & 0 & 0 & 0 & 0 & n(n-1)\sqrt{2} 
\end{bmatrix}
.\]

(ii) To find the representation of conjugacy super enhanced power graph $P_E^c(SD_{8n})$, we examine the conjugacy classes of semidihedral group $SD_{8n}$. These conjugacy classes are given as follows:
\begin{itemize}
\item $[e]_c = \{e\}$;
\item $[a^{2n}]_c = \{a^{2n}\}$;
\item for $1 \leq i \leq 2n + 1$ and $i \neq \{2n-1, 2n\}$, we have $[a^i]_c = \begin{dcases}
\{a^i, a^{4n -i}\},& \text{ if $i$ is even};\\
\{a^i, a^{2n -i}\},& \text{if $i$ is odd};
\end{dcases}$
\item $[ab]_c = \{a^{2k + 1}b:~ 0 \leq k \leq 2n-1 \}$;
\item $[b]_c = \{a^{2k}b:~ 0 \leq k \leq 2n-1 \}$.
\end{itemize}
For each conjugacy class,  the subgraph induced by the vertices belonging to that conjugacy class is complete. In view of the structure of enhanced power graph $P_E(SD_{8n})$ given in the proof of Theorem \ref{Enhanced-SD_8n}, we have
\[\mathcal{P}_E^c(SD_{8n})= K_1\lor \Big(\Big[K_1\lor(K_{4n-2}\cup K_{2n})\Big]\cup K_{2n}\Big).\]
\noindent Now, we arrange the vertices of the conjugacy super enhanced power graph $P_E^c(SD_{8n})$ in a sequence $e, a^{2n}, a, a^2$\\
$,\ldots, a^{2n-1}, a^{2n + 1}, \ldots, a^{4n -1}, ab, a^{2n +1}b, a^3b, a^{2n +3}b, \ldots, a^{2n -1}b, a^{4n -1}b, a^2b, a^6b, a^{10}b, \ldots, a^{2n}b,$ 
 and then $b, a^4b, a^8b$ $,\\ \ldots, a^{4n -4}b$. Further, note that $d(e) = 8n-1;~ d(a^{2n}) = 6n - 1;~ d(a^i) = 4n - 1$ for all $1 \leq i \leq 4n -1$ and $i \neq 2n;~ d(a^{2i + 1}b) = 2n + 1$ for all $0 \leq i \leq 2n -1$ and $d(a^{2i}b) = n$ for all $0 \leq i \leq 2n -2$. Then the Sombor matrix of $\mathcal{P}_E^c(SD_{8n})$ is
\[
 \displaystyle
\begin{bmatrix}
    0 & \sqrt{\alpha^2+\beta^2} & \sqrt{{\alpha}^2+{\gamma}^2} J_{1 \times 4n-2} & \sqrt{\alpha^2+\delta^2} J_{1 \times 2n} & \sqrt{\alpha^2+\psi^2} J_{1 \times 2n} \\
    \sqrt{\alpha^2+\beta^2} & 0 & \sqrt{\beta^2+\gamma^2}J_{1 \times 4n-2} & \sqrt{\beta^2+\delta^2} J_{1 \times 2n} & \mathcal{O}_{1\times 2n} \\
    
    \sqrt{{\alpha}^2+{\gamma}^2}J_{4n-2 \times 1} & \sqrt{\beta^2+\gamma^2}J_{4n-2 \times 1} & \gamma \sqrt{2}(J_{4n-2}-I_{4n-2}) & \mathcal{O}_{4n-2\times 2n} & \mathcal{O}_{4n-2\times 2n}\\
    \sqrt{\alpha^2+\delta^2}J_{2n \times 1} & \sqrt{\beta^2+\delta^2}J_{2n \times 1} & \mathcal{O}_{2n\times 4n-2} & \delta\sqrt{2}(J_{2n}-I_{2n}) & \mathcal{O}_{2n \times 2n}\\
 \sqrt{\alpha^2+\psi^2}J_{2n \times 1} & \mathcal{O}_{2n \times 1} & \mathcal{O}_{2n \times 4n-2} & \mathcal{O}_{2n\times 2n} & \psi \sqrt{2}(J_{2n}-I_{2n})\\
  \end{bmatrix}
,\]

where $J$ denotes the matrix in which each entry is one, $\mathcal{O}$ represents the zero matrix, and $I$ is the identity matrix with
$\alpha = 8n-1, \beta = 6n-1, \gamma = 4n-1, \delta = 2n+1$ and $\psi = 2n.$
Now, $K_{4n-2}, K_{2n}$ and $K_{2n}$ form cliques of orders $4n-2$ and $2n$(2 times), respectively, and each vertex of these cliques share the same neighborhood. By Lemma \ref{multiplicity}, we have $-(4n-1)\sqrt{2}, -(2n+1)\sqrt{2}$ and $-2n\sqrt{2}$ are the Sombor eigenvalues with multiplicities $4n-3, 2n-1$ and $2n-1$, respectively. Therefore, $8n-5$ eigenvalues are known. 

\[\displaystyle \begin{pmatrix}
-(4n-1)\sqrt{2} & -(2n+1)\sqrt{2} & -n\sqrt{2} \\
 4n-3 & 2n-1 & 2n-1  \\
\end{pmatrix},
\]
and the remaining eigenvalues are the eigenvalues of the following equitable quotient matrix

\[\begin{bmatrix}
    0 & \sqrt{\alpha^2+\beta^2} & (4n-2)\sqrt{\alpha^2+\gamma^2}  & 2n\sqrt{\alpha^2+\delta^2}  & 2n\sqrt{\alpha^2+\psi^2} \\
    \sqrt{\alpha^2+\beta^2} & 0 & (4n-2)\sqrt{\beta^2+\gamma^2} & 2n\sqrt{\beta^2+\delta^2}  & 0 \\
    \sqrt{\alpha^2+\gamma^2} & \sqrt{\beta^2+\gamma^2} & (4n-1)(4n-3)\sqrt{2} & 0 & 0 \\
    \sqrt{\alpha^2+\delta^2} & \sqrt{\beta^2+\delta^2} & 0 & (2n-1)(2n+1)\sqrt{2} & 0  \\
 \sqrt{\alpha^2+\psi^2} & 0 & 0 & 0 & 2n(2n-1)\sqrt{2} 
\end{bmatrix}.\]

\end{proof}

\section{Super power graph}
In $2002$, Kelarev and Quinn \cite{a.kelarev2002undirected} introduced the notion of a directed power graph $\vec{\mathcal{P}}(G)$ of a group $G$. Inspired by this, Chakrabarty \emph{et al.} \cite{chakrabarty2009undirected} introduced the concept of an undirected power graph of a semigroup $G$, which was defined as follows: Given a group $G$, the power graph of $G$ is the simple undirected graph with a set of vertex $G$ and two vertices $x$, $y$ in $G$ are adjacent in the power graph $\mathcal{P}(G)$ if and only if one of them is a power of the other. Various researchers have contributed to the power graph $\mathcal{P}(G)$ (see \cite{cameron2011power, cameron2020connectivity, dalal2023connectivity,a.2017Hamzaeh_automorphism,shitov2017coloring}).  For more detail on the power graph, we refer the reader to \cite{kumar2021recent}. In Figures $1-9$, a subset $T$ of the vertex set $G$ is shown in a red circle which means that the subgraph induced by the vertices belongs to the set $T$ and subset $T'$ of the vertex set $G$ is shown in a black circle which means that it is a complete subgraph whose vertices belongs to $T'$. The  following theorem characterizes the group $G$ such that the power graph $\mathcal{P}(G)$ is equal to the order super power graph $\mathcal{P}^o(G)$.

\begin{theorem}{\cite[Theorem 3.1]{dalal2024reducedgraph}}\label{P(G)=P^o(G)}
Let $G$ be a finite group. Then the following are equivalent:
\begin{enumerate}
\item[\rm (i)] $\mathcal{P}^o(G) =\mathcal{P}(G)$;
\item[\rm (ii)] $\mathcal{P}_E^o(G) =\mathcal{P}_E(G)$;
\item[\rm (iii)] $G$ is cyclic.
\end{enumerate}
\end{theorem}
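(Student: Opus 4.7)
The plan is to prove the three-way equivalence by establishing (iii)$\Rightarrow$(i) together with (iii)$\Rightarrow$(ii), and the converses by the contrapositive: if $G$ is not cyclic, then both (i) and (ii) fail. The whole argument rests on one classical fact about cyclic groups: a finite group $G$ is cyclic if and only if for every $d$ dividing $|G|$ there is at most one cyclic subgroup of order $d$. This follows by the count $|G|=\sum_{d\mid |G|} k_d\,\phi(d)$, where $k_d$ is the number of cyclic subgroups of order $d$, combined with $\sum_{d\mid |G|}\phi(d)=|G|$, which forces $k_d=1$ for every $d\mid |G|$, and in particular $k_{|G|}=1$.

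For (iii)$\Rightarrow$(ii), if $G$ is cyclic then any two elements of $G$ lie in the cyclic group $G$ itself, so $\mathcal{P}_E(G)=K_{|G|}$; since we always have $\mathcal{P}_E(G)\subseteq \mathcal{P}_E^o(G)\subseteq K_{|G|}$, equality follows at once. For (iii)$\Rightarrow$(i), I would show that in a cyclic group adjacency in $\mathcal{P}(G)$ depends only on the orders of the endpoints: by uniqueness of subgroups, $x\in\langle y\rangle$ is equivalent to $o(x)\mid o(y)$, so $x\sim y$ in $\mathcal{P}(G)$ iff $o(x)\mid o(y)$ or $o(y)\mid o(x)$. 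Consequently, if $x'\sim y'$ in $\mathcal{P}(G)$ with $o(x')=o(x)$ and $o(y')=o(y)$, then already $x\sim y$ in $\mathcal{P}(G)$, which means the order super construction introduces no new edges, giving $\mathcal{P}^o(G)=\mathcal{P}(G)$.

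For the converse direction I would work contrapositively. Suppose $G$ is not cyclic; by the classical fact above there exists $d\mid |G|$ with $d>1$ and two distinct cyclic subgroups of order $d$, hence two elements $x,y\in G$ with $o(x)=o(y)=d$ and $\langle x\rangle\neq\langle y\rangle$. Since $|\langle x\rangle|=|\langle y\rangle|$, neither contains the other, so $x\notin\langle y\rangle$ and $y\notin\langle x\rangle$, giving $x\not\sim y$ in $\mathcal{P}(G)$. Moreover, any cyclic subgroup $H$ containing both $x$ and $y$ would have a unique subgroup of order $d$ by the classical fact, forcing $\langle x\rangle=\langle y\rangle$, a contradiction; hence $x\not\sim y$ in $\mathcal{P}_E(G)$ as well. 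On the other hand, $x$ and $y$ lie in the same $B_o$-class, so they are adjacent in both $\mathcal{P}^o(G)$ and $\mathcal{P}_E^o(G)$, showing that (i) and (ii) both fail.

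The main obstacle is the clean articulation of the classical characterisation of cyclic groups via uniqueness of subgroups of each order, together with the observation that two distinct cyclic subgroups of the same order cannot share a common cyclic overgroup. Once these two ingredients are in place, the rest of the equivalence is a short verification in each direction, and no heavy computation is required.
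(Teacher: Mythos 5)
The paper does not prove this statement at all: it is imported verbatim as Theorem 3.1 of the cited reference \cite{dalal2024reducedgraph}, so there is no in-paper argument to compare yours against. Your blind proof is nonetheless correct and complete. The logical skeleton (iii)$\Rightarrow$(i), (iii)$\Rightarrow$(ii), and $\neg$(iii)$\Rightarrow\neg$(i)$\wedge\neg$(ii) does establish the three-way equivalence; the counting identity $|G|=\sum_{d\mid |G|}k_d\,\phi(d)$ correctly yields the characterisation of cyclic groups by uniqueness of cyclic subgroups of each order; the reduction of power-graph adjacency in a cyclic group to the divisibility condition $o(x)\mid o(y)$ or $o(y)\mid o(x)$ is right and immediately gives that $B_o$ adds no edges; and in the non-cyclic case the two elements $x,y$ of equal order with $\langle x\rangle\neq\langle y\rangle$ are indeed non-adjacent in both $\mathcal{P}(G)$ and $\mathcal{P}_E(G)$ (any common cyclic overgroup would force $\langle x\rangle=\langle y\rangle$) yet adjacent in both order super graphs. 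The only point worth making explicit is that the relation ``$\mathcal{P}_E(G)$ is a spanning subgraph of $\mathcal{P}_E^o(G)$'' (and likewise for $\mathcal{P}$) follows directly from the definition of the $B$ super $A$ graph, since taking $g'=g$ and $h'=h$ shows every edge of $A$ survives; you use this implicitly and it is harmless, but stating it would make the argument airtight.
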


In this section, we discuss the Sombor spectrum of the super power graphs $\mathcal{P}(G), \mathcal{P}^o(G), \mathcal{P}^c(G)$, for $G= D_{2n}, Q_{4n}$ and $SD_{8n}$ into three subsections.

\subsection{Power graph}
This subsection is devoted to the Sombor spectrum of the power graphs $\mathcal{P}(G)$, for $G= D_{2n}, Q_{4n}$ and $SD_{8n}$.
The following lemma will be useful to obtain the Sombor spectrum of the power graph of the dihedral group $D_{2n}$.

\begin{lemma}\label{P(D2n)-structure}
Let $G$ be a dihedral group $D_{2n}$. Then 
\[
\mathcal{P}(D_{2n}) = K_1 \vee \left[ K_{\phi(n)} \vee \Gamma_{n}[K_{\phi(d_1)},K_{\phi(d_2)},\ldots,K_{\phi(d_t)}] \cup \overline{K}_n \right],
\]
where $\Gamma_n$ is a graph with vertex $V(\Gamma_n) = \{ d_i : 1, n \neq d_i | n,~ 1 \leq i \leq t \}$ and two distinct  vertices $d_i$ and $d_j$ are adjacent in $\Gamma_n$ if one of them divides other.
\end{lemma}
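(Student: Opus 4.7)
The plan is to analyze $\mathcal{P}(D_{2n})$ by splitting $D_{2n}$ along the decomposition $D_{2n} = \langle a\rangle \cup \bigcup_{i=0}^{n-1}\langle a^ib\rangle$ from \eqref{eq(1)} and identifying how cyclic containments between elements translate into adjacencies. The three regions to understand are: the identity $e$, the rest of the cyclic subgroup $\langle a\rangle$, and the $n$ reflections $a^ib$.

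First I would handle the reflections. Each $a^ib$ has order $2$ and $\langle a^ib\rangle=\{e,a^ib\}$, so $a^ib$ is never a non-trivial power of any $a^j$ or of any other $a^jb$, and conversely no such element is a power of $a^ib$. Hence in $\mathcal{P}(D_{2n})$ the reflection $a^ib$ is adjacent only to $e$; the set of reflections induces the independent graph $\overline{K}_n$ and is connected to the rest of $D_{2n}$ only through $e$. Since $e \in \langle x\rangle$ for every $x\in D_{2n}$, $e$ is a universal vertex, producing the outer factor $K_1\vee(-)$.

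Next I would study the induced subgraph on $\langle a\rangle\setminus\{e\}$. The cyclicity of $\langle a\rangle$ gives the crucial fact: for $x,y\in\langle a\rangle\setminus\{e\}$ with orders $o(x)=d_x,~o(y)=d_y$, one has $x\in\langle y\rangle$ or $y\in\langle x\rangle$ if and only if $d_x\mid d_y$ or $d_y\mid d_x$, because in a cyclic group the subgroup of order $d$ is unique and contains exactly the elements whose order divides $d$. In particular, elements of the same order $d$ generate the same subgroup and hence form a clique $K_{\phi(d)}$. The $\phi(n)$ generators of $\langle a\rangle$ (those of order $n$) are adjacent to every other element of $\langle a\rangle$, giving a $K_{\phi(n)}$ joined to everything else in $\langle a\rangle\setminus\{e\}$; for the remaining orders $d_1,\dots,d_t$ (the divisors of $n$ other than $1$ and $n$), the divisibility criterion above is precisely the adjacency rule of $\Gamma_n$ applied blockwise to the cliques $K_{\phi(d_1)},\dots,K_{\phi(d_t)}$, so the induced structure on $\langle a\rangle\setminus(\{e\}\cup\{\text{generators}\})$ is the generalized join $\Gamma_n[K_{\phi(d_1)},\dots,K_{\phi(d_t)}]$.

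Combining the three pieces, $e$ is universal, $\langle a\rangle\setminus\{e\}$ decomposes as $K_{\phi(n)}\vee \Gamma_n[K_{\phi(d_1)},\dots,K_{\phi(d_t)}]$, and the reflections contribute a pendant $\overline{K}_n$ with no edges to $\langle a\rangle\setminus\{e\}$, yielding exactly
\[
\mathcal{P}(D_{2n}) = K_1 \vee \Bigl[\bigl(K_{\phi(n)} \vee \Gamma_n[K_{\phi(d_1)},\dots,K_{\phi(d_t)}]\bigr) \cup \overline{K}_n\Bigr].
\]
The main subtlety, and the only place I would be careful, is parsing the join/union precedence and checking the degenerate cases: when $n$ is prime, $t=0$ and the generalized join term is empty, and the formula must be read as $K_1\vee(K_{\phi(n)}\cup\overline{K}_n)$, which is indeed $\mathcal{P}(D_{2p})$; for composite $n$ one only has to verify that the order-class partition of $\langle a\rangle\setminus\{e\}$ is compatible with the generalized join construction, which follows immediately from the divisibility description of adjacency.
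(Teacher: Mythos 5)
Your proof is correct and follows the same decomposition as the paper's: the identity is a universal vertex, each reflection $a^ib$ is adjacent only to $e$ (giving the $\overline{K}_n$ component joined in by $K_1$), and the induced subgraph on $\langle a\rangle$ supplies the factor $K_{\phi(n)}\vee\Gamma_n[K_{\phi(d_1)},\ldots,K_{\phi(d_t)}]$. The only difference is that the paper simply cites Mehranian, Gholami and Ashrafi for the identity $\mathcal{P}(\mathbb{Z}_n)=K_{\phi(n)+1}\vee\Gamma_n[K_{\phi(d_1)},\ldots,K_{\phi(d_t)}]$, whereas you rederive it from the divisibility criterion for adjacency in a cyclic group (uniqueness of the subgroup of each order), which is a valid and self-contained substitute.
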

\begin{proof} In view of the representation of the dihedral group $D_{2n}$, we observe that $a^ib \nsim a^j$ for all $1 \leq i, j \leq n$ and $j \neq n$. Also, the subgraph induced by the vertices belongs to the cyclic subgroup generated by the element $a$, $\mathcal{P}(\mathbb Z_n) =  K_{\phi(n) + 1} \vee \Gamma_{n}[K_{\phi(d_1)}, K_{\phi(d_2)},\ldots,K_{\phi(d_t)}]$ (see \cite{mehranian2016note}). Thus, we get the required representation of $\mathcal{P}(D_{2n})$.
\end{proof}

\begin{theorem}\label{power D_2n}
 Let $G$ be a dihedral group $D_{2n}$. Then  the Sombor spectrum of power graph $\mathcal{P}(D_{2n})$ is
 \[\displaystyle \begin{pmatrix}
-(n-1)\sqrt{2} & -\bar{d}_1\sqrt{2}  & -\bar{d}_2\sqrt{2} & \cdots & -\bar{d}_t\sqrt{2} & 0 \\
 \phi(n)-1 & \phi(d_1)-1 & \phi(d_2)-1 & & \phi(d_t)-1 & n-1  \\
\end{pmatrix},
\]
and the remaining eigenvalues are the eigenvalues of the following equaitable quotient matrix 

\[
 \displaystyle
 \begin{bmatrix}
0 & \phi(n)\sqrt{\alpha^2+\beta^2} & \phi(d_1)\sqrt{\alpha^2+\bar{d_1}^2}  & \phi(d_2)\sqrt{\alpha^2+\bar{d_2}^2} & \cdots & \phi(d_1)\sqrt{\alpha^2+\bar{d_t}^2}  & n\sqrt{\alpha^2+1^2}\\
\sqrt{\alpha^2+\beta^2} & \beta(\phi(n)-1)\sqrt{2}& \phi(d_1)\sqrt{\beta^2+\bar{d_1}^2} & \phi(d_2)\sqrt{\beta^2+\bar{d_2}^2} & \cdots & \phi(d_t) \sqrt{\beta^2+ \bar{d_t}^2} & 0\\
\sqrt{\alpha^2+ \bar{d_1}^2} & \phi(n)\sqrt{\beta^2+ \bar{d_1}^2} & (\phi(d_1)-1)\bar{d_1}\sqrt{2} & \phi(d_2)a_{12} & \cdots & \phi(d_t)a_{1t} & 0 \\
\sqrt{\alpha^2+ \bar{d_2}^2} & \phi(n)\sqrt{\beta^2+ \bar{d_2}^2} & \phi(d_1)a_{21} & (\phi(d_2)-1)\bar{d_2}\sqrt{2} & \cdots & \phi(d_t)a_{2t} & 0 \\
\vdots & \vdots & \vdots & \vdots & \ddots & \vdots & \vdots\\
\sqrt{\alpha^2+ \bar{d_t}^2} & \phi(n)\sqrt{\beta^2+\overline{d_t}^2} & \phi(d_1)a_{t1} & \phi(d_2)a_{t2} & \cdots & (\phi(d_t)-1)\bar{d_t}\sqrt{2} & 0 \\
\sqrt{\alpha^2+1^2} & 0 & 0 & 0 & \cdots & 0 & 0 \\
\end{bmatrix}
,\]
  where $\alpha=(2n-1), \; \beta=(n-1)$, $\bar{d_i}= d(a^j)$ for some $a^j$ with $o(a^j)= d_i$, and $a_{ij}=
\begin{dcases}
 \sqrt{\bar{d_i}^2+\bar{d_j}^2}, & d_i \mid d_j; \\
 0, & otherwise.
\end{dcases}$
\end{theorem}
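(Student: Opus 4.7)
The plan is to deduce the spectrum from the structural decomposition given in Lemma \ref{P(D2n)-structure}, then invoke the multiplicity lemma \ref{multiplicity} and the equitable‑partition theorem \ref{equitable matrix}. First I would partition $V(\mathcal{P}(D_{2n}))$ into the following classes: $C_0=\{e\}$, $C_\ast=\{a^j: o(a^j)=n\}$ (the $\phi(n)$ generators of $\langle a\rangle$), $C_i=\{a^j: o(a^j)=d_i\}$ of size $\phi(d_i)$ for each proper nontrivial divisor $d_i$ of $n$, and $C_R=\{a^ib:0\le i\le n-1\}$. A short degree computation gives $\deg(e)=2n-1=\alpha$, $\deg(a^j)=n-1=\beta$ when $o(a^j)=n$, $\deg(a^j)=\bar d_i$ when $o(a^j)=d_i$, and $\deg(a^ib)=1$; in particular the degree depends only on the class.

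Next I would verify that within each class the vertices share the same external neighborhood: two elements of $\langle a\rangle$ of the same order generate the same cyclic subgroup, so they have identical neighbors in $\mathcal{P}(D_{2n})$ up to themselves; the reflections are pairwise non‑adjacent and each is adjacent only to $e$. Hence $C_\ast$ and each $C_i$ are cliques with the same‑neighborhood property, and $C_R$ is an independent set with the same‑neighborhood property. Applying Lemma \ref{multiplicity} yields $-(n-1)\sqrt2$ with multiplicity $\phi(n)-1$, $-\bar d_i\sqrt2$ with multiplicity $\phi(d_i)-1$ for each $1\le i\le t$, and $0$ with multiplicity $n-1$. A quick check using $\sum_{d\mid n}\phi(d)=n$ shows these account for $2n-(t+3)$ of the $2n$ eigenvalues.

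For the remaining $t+3$ eigenvalues I would write the Sombor matrix in the block form corresponding to the ordered partition $(C_0,C_\ast,C_1,\ldots,C_t,C_R)$. Each diagonal block is of the form $d\sqrt2(J-I)$ (for the clique classes) or the zero block (for $C_R$ and for $C_0$), and the off‑diagonal block between classes of degrees $d,d'$ is $\sqrt{d^2+(d')^2}\,J$ if the classes are joined in $\mathcal{P}(D_{2n})$ and $\mathbf 0$ otherwise. The between‑class adjacencies are exactly those prescribed by Lemma \ref{P(D2n)-structure}: $C_0$ joins every other class; $C_\ast$ joins $C_0$ and every $C_i$ (and not $C_R$); $C_i$ and $C_j$ are joined precisely when $d_i\mid d_j$ or $d_j\mid d_i$; and $C_R$ is joined only to $C_0$. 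This partition is therefore equitable, and Theorem \ref{equitable matrix} (with $s_{ii}=d\sqrt2$, $p_i=-d\sqrt2$ on clique diagonals and $s_{ii}=p_i=0$ on the zero blocks) gives the quotient matrix displayed in the statement, whose spectrum provides the remaining $t+3$ eigenvalues.

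The routine part is the bookkeeping of the off‑diagonal scalars $a_{ij}=\sqrt{\bar d_i^2+\bar d_j^2}$ weighted by the column class size $\phi(d_j)$, which follows immediately from $b_{ij}=s_{ij}n_j$. The only genuine piece of work is the structural step: confirming that two elements of the cyclic part of $D_{2n}$ sharing the same order have identical closed neighborhoods in $\mathcal{P}(D_{2n})$, since this is what makes the partition equitable and lets the multiplicity lemma account for all eigenvalues outside the quotient. Once that is in place, the formula follows by direct assembly of the blocks and an application of Theorems \ref{equitable matrix} and \ref{equitable}.
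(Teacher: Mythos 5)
Your proposal is correct and follows essentially the same route as the paper: both rest on the decomposition of Lemma \ref{P(D2n)-structure}, extract the clique and independent-set eigenvalues via Lemma \ref{multiplicity}, and obtain the remaining $t+3$ eigenvalues from the equitable quotient matrix of the block Sombor matrix via Theorem \ref{equitable matrix}. The structural observation you single out --- that elements of $\langle a\rangle$ of equal order have identical closed neighbourhoods in $\mathcal{P}(D_{2n})$ --- is exactly what the paper uses (implicitly, through the quoted structure of $\mathcal{P}(\mathbb{Z}_n)$) to make the partition equitable.
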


\begin{proof} By Lemma \ref{P(D2n)-structure}, we have 
\[
\mathcal{P}(D_{2n}) = K_1 \vee \left[ K_{\phi(n)} \vee \Gamma_{n}[K_{\phi(d_1)},K_{\phi(d_2)},\ldots,K_{\phi(d_t)}] \cup \overline{K}_n \right],
\]
where $\Gamma_n$ is a graph with vertex $V(\Gamma_n) = \{ d_i : 1, n \neq d_i | n,~ 1 \leq i \leq t \}$ and two distinct  vertices $d_i$ and $d_j$ are adjacent in $\Gamma_n$ if one of them divides other. Clearly, $d(e) = 2n-1;~ d(a^ib) = 1$ for all $1 \leq i \leq n$; and for each $1 \leq i \leq n - 1$, we have $d(a^i) = \phi(d_j)$  for some $1 \leq j \leq t$ (see \cite{mehranian2016note}). Therefore, we consider $e \in V(K_1), a^ib \in V(\overline{K}_n)$ for all $1 \leq i \leq n$, and $a^i \in \Gamma_{n}[K_{\phi(d_1)},K_{\phi(d_2)},\ldots,K_{\phi(d_t)}]$ for all $1 \leq i \leq n - 1$. Moreover, for $1 \leq i_r \leq n$ and $(i_r, n) = n$, let $a^{^{i_r}} \in V(K_{\phi(n)})$. Also, for each $1 \leq i \leq n-1$ with $o(a^i) = d_i$, we assume $a^i \in V(K_{d_i})$. Note that the degree of all the vertices belonging to $K_{d_i}$ are equal and we assume that it is $\bar{d_i}$ for some positive integer $\bar{d_i}$, where $1 \leq i \leq t$. The Sombor matrix of $\mathcal{P}(D_{2n})$ is the $2n \times 2n$ matrix given below, where the rows and columns are indexed in order by the vertices $e, a^{i_2}, a^{i_3}, \ldots, a^{i_{\phi(n)}}, a^{^{d^1_1}}, a^{^{d^2_1}}, \ldots, a^{^{d^{\phi(d_1)}_1}}, \ldots\ldots, a^{^{d^1_t}}, a^{^{d^2_t}}, \ldots, a^{^{d^{\phi(d_t)}_t}},$ and then $b, ab, a^2b, \ldots, a^{n-1}b$.

\[
 \displaystyle
 \scriptsize
    \begin{bmatrix}
0 & \sqrt{\alpha^2+\beta^2}J_{1 \times \phi(n)} & \sqrt{\alpha^2+\bar{d_1}^2} J_{1 \times \phi(d_1)} & \sqrt{\alpha^2+\bar{d_2}^2} J_{1 \times \phi(d_2)} & \cdots & \sqrt{\alpha^2+\bar{d_t}^2} J_{1 \times \phi(t)} & \sqrt{\alpha^2+1^2} J_{1 \times n}\\
\sqrt{\alpha^2+\beta^2} & \beta\sqrt{2}(J_{\phi(n)}-I_{\phi(n)}) & \sqrt{\beta^2+\bar{d_1}^2}J_{\phi(n) \times \phi(d_1)} & \sqrt{\beta^2+\bar{d_2}^2}J_{\phi(n) \times \phi(d_2)} & \cdots & \sqrt{\beta^2+\bar{d_t}^2}J_{\phi(n) \times \phi(d_t)} &\mathcal{O}_{\phi(n) \times n} \\
\sqrt{\alpha^2+\bar{d_1}^2}J_{\phi(d_1) \times 1} & \sqrt{\beta^2+\bar{d_1}^2}J_{\phi(d_1) \times \phi(n)} & \bar{d_1}\sqrt{2}(J_{\phi(d_1)}-I_{\phi(d_1)}) & \mathcal{A}_{12} & \cdots & \mathcal{A}_{1t} & \mathcal{O}_{\phi(d_1) \times n} \\
\sqrt{\alpha^2+\bar{d_2}^2}J_{\phi(d_2) \times 1} & \sqrt{\beta^2+\bar{d_2}^2}J_{\phi(d_2) \times \phi(n)} & \mathcal{A}_{21} &  \bar{d_2}\sqrt{2}(J_{\phi(d_2)}-I_{\phi(d_2)}) & \cdots & \mathcal{A}_{2t} & \mathcal{O}_{\phi(d_2) \times n} \\
\vdots & \vdots & \vdots & \vdots & \ddots & \vdots & \vdots\\
\sqrt{\alpha^2+\bar{d_t}^2}J_{\phi(d_t) \times 1} & \sqrt{\beta^2+\bar{d_t}^2}J_{\phi(d_t) \times \phi(n)} & \mathcal{A}_{t1} & \mathcal{A}_{t2} & \cdots & \bar{d_t}\sqrt{2}(J_{\phi(d_t)}-I_{\phi(d_2)})& \mathcal{O}_{\phi(d_t) \times n} \\
\sqrt{\alpha^2+1^2}J_{n \times 1} & \mathcal{O}_{n \times \phi(n)} & \mathcal{O}_{n \times\phi(d_1)} & \mathcal{O}_{n \times\phi(d_2)} & \cdots & \mathcal{O}_{n \times\phi(d_t)}& \mathcal{O}_{n \times n} \\
\end{bmatrix}
,\]
 with
$\alpha= 2n-1, \beta=n-1$ and
$\mathcal{A}_{ij}=\mathcal{A}_{ji}= [a_{ij}]_{\phi(d_i) \times\phi(d_j)}$, 
where
\[ a_{ij}=
\begin{dcases}
 \sqrt{\bar{d_i}^2+\bar{d_j}^2}, & d_i \mid d_j; \\
 0, & otherwise.
\end{dcases}
\]

Now, $K_{\phi(n)}, K_{\phi(d_1)}, K_{\phi(d_2)},\ldots, K_{\phi(d_t)}$  form cliques of order $\phi(n), \phi(d_1), \phi(d_2),\ldots,\phi(d_t)$, respectively and each vertex of these cliques share the same neighborhood. So by Lemma $2.4$, we have $-(n-1)\sqrt{2}, -\overline{d}_1\sqrt{2},-\overline{d}_2\sqrt{2},\ldots,  -\overline{d}_t\sqrt{2}$ are the Sombor eigenvalues with multiplicities $\phi(n), \phi(d_1), \phi(d_2), \ldots, \phi(d_t)$, respectively. Also, $\overline{K}_{n}$ forms an independent set and share the same neighborhood in the graph. By Lemma \ref{multiplicity}, we conclude that $0$ is the Sombor eigenvalue with multiplicity $n-1.$ It implies that, from the total $2n$ eigenvalues $2n-(t+3)$ are known 

 \[\displaystyle \begin{pmatrix}
-(n-1)\sqrt{2} & -\bar{d}_1\sqrt{2}  & -\bar{d}_2\sqrt{2} & \cdots & -\bar{d}_t\sqrt{2} & 0 \\
 \phi(n)-1 & \phi(d_1)-1 & \phi(d_2)-1 & & \phi(d_t)-1 & n-1  \\
\end{pmatrix},
\]
and the remaining eigenvalues are the eigenvalues of the equaitable quotient matrix which is given below 
\[
 \displaystyle
 \begin{bmatrix}
0 & \phi(n)\sqrt{\alpha^2+\beta^2} & \phi(d_1)\sqrt{\alpha^2+\bar{d_1}^2}  & \phi(d_2)\sqrt{\alpha^2+\bar{d_2}^2} & \cdots & \phi(d_1)\sqrt{\alpha^2+\bar{d_t}^2}  & n\sqrt{\alpha^2+1^2}\\
\sqrt{\alpha^2+\beta^2} & \beta(\phi(n)-1)\sqrt{2}& \phi(d_1)\sqrt{\beta^2+\bar{d_1}^2} & \phi(d_2)\sqrt{\beta^2+\bar{d_2}^2} & \cdots & \phi(d_t) \sqrt{\beta^2+ \bar{d_t}^2} & 0\\
\sqrt{\alpha^2+ \bar{d_1}^2} & \phi(n)\sqrt{\beta^2+ \bar{d_1}^2} & (\phi(d_1)-1)\bar{d_1}\sqrt{2} & \phi(d_2)a_{12} & \cdots & \phi(d_t)a_{1t} & 0 \\
\sqrt{\alpha^2+ \bar{d_2}^2} & \phi(n)\sqrt{\beta^2+ \bar{d_2}^2} & \phi(d_1)a_{21} & (\phi(d_2)-1)\bar{d_2}\sqrt{2} & \cdots & \phi(d_t)a_{2t} & 0 \\
\vdots & \vdots & \vdots & \vdots & \ddots & \vdots & \vdots\\
\sqrt{\alpha^2+ \bar{d_t}^2} & \phi(n)\sqrt{\beta^2+\overline{d_t}^2} & \phi(d_1)a_{t1} & \phi(d_2)a_{t2} & \cdots & (\phi(d_t)-1)\bar{d_t}\sqrt{2} & 0 \\
\sqrt{\alpha^2+1^2} & 0 & 0 & 0 & \cdots & 0 & 0 \\
\end{bmatrix}
,\]
  where $\alpha=(2n-1), \; \beta=(n-1)$, $\bar{d_i}= d(a^j)$ for some $a^j$ with $o(a^j)= d_i$, and $a_{ij}=
\begin{dcases}
 \sqrt{\bar{d_i}^2+\bar{d_j}^2}, & d_i \mid d_j; \\
 0, & otherwise.
\end{dcases}$
\end{proof}

\begin{theorem}\label{power Q_2n}
 Let $G$ be a generalized quaternion group $Q_{4n}$. Then  $-3\sqrt{2}$ is a Sombor eigenvalue of multiplicity at  $n$ of power graph $\mathcal{P}(Q_{4n})$.
\end{theorem}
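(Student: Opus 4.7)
The plan is to apply Lemma \ref{multiplicity}(ii) repeatedly, once for each maximal cyclic subgroup of order $4$ in $Q_{4n}$. Recall from \eqref{eq(2)} that $Q_{4n}=\langle a\rangle\cup\bigcup_{i=0}^{n-1}\langle a^ib\rangle$, and that for every $0\le i\le n-1$ we have
\[
\langle a^ib\rangle=\{e,\,a^n,\,a^ib,\,a^{n+i}b\},
\]
a cyclic subgroup of order $4$ whose two generators are $a^ib$ and $(a^ib)^3=a^{n+i}b$. Crucially, these subgroups pairwise intersect exactly in $\{e,a^n\}$, and each element $a^ib$ of order $4$ lies in no cyclic subgroup of $Q_{4n}$ other than $\langle a^ib\rangle$.

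First I would fix $i\in\{0,1,\dots,n-1\}$ and compute the closed neighbourhood of $a^ib$ in $\mathcal{P}(Q_{4n})$. Adjacency in the power graph means $x\in\langle y\rangle$ or $y\in\langle x\rangle$, so the neighbours of $a^ib$ are precisely the elements of $\langle a^ib\rangle$ together with any $y$ satisfying $a^ib\in\langle y\rangle$; by the previous remark the latter forces $\langle y\rangle=\langle a^ib\rangle$. Hence
\[
N[a^ib]=\langle a^ib\rangle=\{e,\,a^n,\,a^ib,\,a^{n+i}b\}=N[a^{n+i}b],
\]
so in particular $\deg(a^ib)=\deg(a^{n+i}b)=3$. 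This shows that $S_i=\{a^ib,\,a^{n+i}b\}$ is a clique of two vertices that share the same neighbourhood outside $S_i$. By Lemma \ref{multiplicity}(ii), $-3\sqrt{2}$ is a Sombor eigenvalue of $\mathcal{P}(Q_{4n})$ of multiplicity at least $|S_i|-1=1$ contributed by $S_i$, with an associated eigenvector supported on $S_i$ (concretely, $e_{a^ib}-e_{a^{n+i}b}$).

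To assemble a multiplicity bound of $n$, I would then note that the sets $S_0,S_1,\dots,S_{n-1}$ are pairwise disjoint (since distinct cyclic subgroups $\langle a^ib\rangle$ and $\langle a^jb\rangle$ meet only in $\{e,a^n\}$). Consequently the $n$ eigenvectors obtained above have disjoint supports and are therefore linearly independent, so they span an $n$-dimensional eigenspace of $S(\mathcal{P}(Q_{4n}))$ corresponding to the eigenvalue $-3\sqrt{2}$. This yields multiplicity at least $n$, as required.

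The only delicate step is the neighbourhood computation, i.e.\ verifying that no vertex outside $\langle a^ib\rangle$ is adjacent to $a^ib$ in $\mathcal{P}(Q_{4n})$; this reduces to checking the cyclic subgroup lattice of $Q_{4n}$ and observing that the maximal cyclic subgroup containing $a^ib$ is $\langle a^ib\rangle$ itself. Once that is in hand the rest is a direct application of Lemma \ref{multiplicity}(ii) together with a disjointness argument, with no further calculation needed.
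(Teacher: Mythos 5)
Your proof is correct and follows essentially the same route as the paper: identify each pair $\{a^ib,\,a^{n+i}b\}$ as a clique of degree-$3$ vertices with common closed neighbourhood $\{e,a^n,a^ib,a^{n+i}b\}$ and apply Lemma \ref{multiplicity}(ii) to each of the $n$ pairs. Your explicit justification that the resulting $n$ eigenvectors have disjoint supports (hence are linearly independent) is a small but welcome addition that the paper leaves implicit.
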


\begin{proof}
In view of Equation \ref{eq(2)}, we consider the sets $A = \{ a^i : 1 \leq i \leq 2n -1,~ i \neq n ~ \text{and} \; o(a^i)~ \text{is~ even}\}$ and $B = \{ a^i : 1 \leq i \leq 2n -1,~ i \neq n ~ \text{and} ~o(a^i)~ \text{is~ odd}\}$. Again, by Equation \ref{eq(2)}, we observe that $N[a^ib] = \{e, a^n, a^ib, a^{n+i}b\},$ where $1 \leq i \leq 2n$; $ ~ N[a^n] = A \cup \{ e \} \cup \{a^ib : 1 \leq i \leq n\}$ and the subgraph induced by the vertices belong to $\langle a \rangle$ is isomorphic to $\mathcal{P}(\mathbb Z_{2n}) =  K_{\phi(2n) + 1} \vee \Gamma_{n}[K_{\phi(d_1)}, K_{\phi(d_2)},\ldots,K_{\phi(d_t)}]$. Therefore the power graph $\mathcal{P}(Q_{4n})$ is given in Figure \ref{Fig-1}.

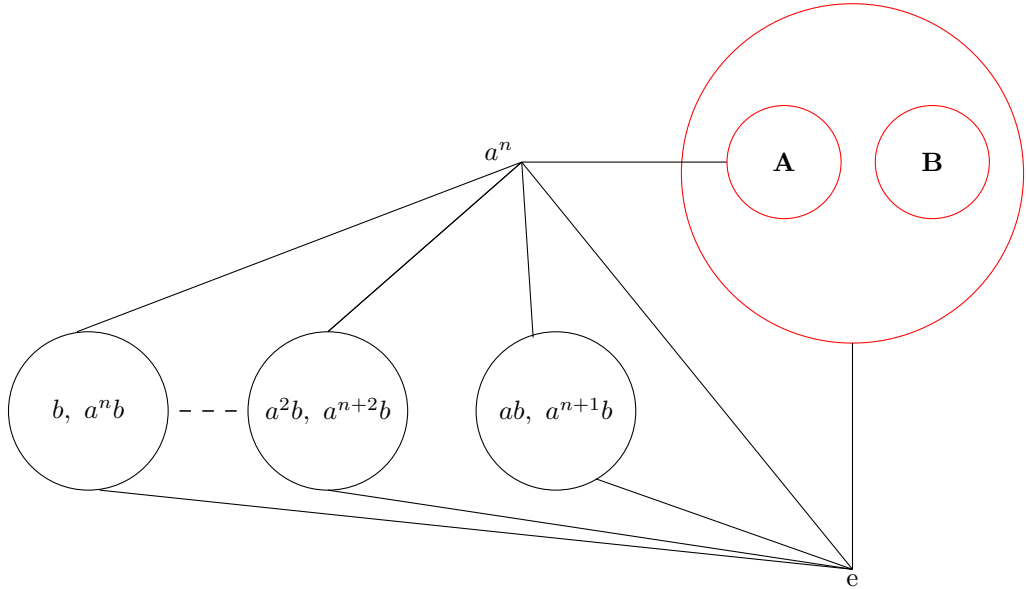
\begin{figure}[h!]
\begin{center}
\begin{tikzpicture}[scale=1.5]
\draw[color=red] (0,2) circle [radius=1.5];
\draw (0,-1.6) node{e};

\draw (-0.6,2.1) node{\textbf{A}};
\draw (0.7,2.1) node{\textbf{B}};
\draw (-1.1,2.1)--(-2.9,2.1);
\draw (-3.1,2.2) node{$a^n$};
\draw (-2.9,2.1)--(0,-1.5);
\draw (0,-1.5)--(0,0.5);
\draw[color=red] (-0.6, 2.1) circle [radius=0.5];
\draw[color=black] (-2.6, -0.1) circle [radius=0.7];
\draw (-2.9,2.1)--(-2.8,0.55);
\draw (-2.25, -0.7)--(0,-1.5);
\draw (-2.6, -0.1) node{$ab,~a^{n+1}b$};

\draw[color=black] (-4.6, -0.1) circle [radius=0.7];
\draw (-4.6, -0.8)--(0,-1.5);
\draw (-4.6, 0.6)--(-2.9,2.1);
\draw  (-5.6, -0.1)--(-5.7,-0.1);
\draw (-5.5, -0.1)--(-5.4,-0.1);
\draw (-5.8, -0.1)--(-5.9,-0.1);

\draw[color=black] (-6.7, -0.1) circle [radius=0.7];
\draw (-6.7, -0.1) node{$b,~a^{n}b$};
\draw (-2.9,2.1)--(-6.8,0.6);
\draw (-6.6, -0.8)--(0,-1.5);
\draw (-4.6, 0.6)--(-2.9,2.1);
\draw  (-5.6, -0.1)--(-5.7,-0.1);
\draw (-5.5, -0.1)--(-5.4,-0.1);
\draw (-5.8, -0.1)--(-5.9,-0.1);

\draw (-4.6, -0.1) node{$a^2b,~a^{n+2}b$};

\draw[color=red] (0.7,2.1) circle [radius=0.5];
\end{tikzpicture}
\caption{Power graph $\mathcal{P}(Q_{4n})$}\label{Fig-1}
\end{center}
\end{figure}

For each $1 \leq i \leq n$, note that $N[a^ib] = N[a^{n+i}b] \setminus \{a^ib, a^{n+i}b\} = \{e, a^n\}$ and $d(a^ib) = d(a^{n+i}b) = 3$.  By Lemma \ref{multiplicity}, $-3\sqrt{2}$ is an eigenvalue of multiplicity at least $n$. 
\end{proof}

\begin{theorem}\label{power SD_2n}
 Let $G$ be a semidihedral  group $SD_{8n}$. Then  $0$ and $-3\sqrt{2}$  are Sombor eigenvalue of a power graph $\mathcal{P}(SD_{8n})$ with multiplicity $n$ and $2n-1$, respectively.
\end{theorem}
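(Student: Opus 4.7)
The plan is to apply Lemma~\ref{multiplicity} to two natural structural subsets of $V(\mathcal{P}(SD_{8n}))$ arising from the decomposition in Equation~\eqref{Eq-3}. The essential task is first to pin down, for each element of $SD_{8n}\setminus\langle a\rangle$, the unique cyclic subgroup it lies in, and then to exploit the resulting common-neighbourhood structure.

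First, I would determine the power-graph neighbourhoods of each vertex of the form $a^{i}b$. Using \eqref{Eq-3}, if $i=2k$ is even (so $a^{2k}b$ has order $2$), then the only cyclic subgroup containing $a^{2k}b$ is $P_{k}=\{e,a^{2k}b\}$; hence $N[a^{2k}b]=\{e,a^{2k}b\}$ and $\deg(a^{2k}b)=1$. If $i=2j+1$ is odd (so $a^{2j+1}b$ has order $4$), then the only cyclic subgroup containing $a^{2j+1}b$ is $Q_{j}=\{e,a^{2n},a^{2j+1}b,a^{2n+2j+1}b\}$; hence $N[a^{2j+1}b]=\{e,a^{2n},a^{2j+1}b,a^{2n+2j+1}b\}$ and $\deg(a^{2j+1}b)=3$. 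These facts are immediate from the relation $ba^{i}=a^{2n-i}b$ (odd $i$) or $a^{4n-i}b$ (even $i$), which forces a vertex outside $\langle a\rangle$ to be adjacent in $\mathcal{P}(SD_{8n})$ only to the elements of its own cyclic subgroup.

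Second, I would identify two disjoint structural pieces to which Lemma~\ref{multiplicity} applies. Piece (a): the set $I=\{a^{2k}b:0\le k\le 2n-1\}$ of the $2n$ order-two elements outside $\langle a\rangle$ is an independent set in $\mathcal{P}(SD_{8n})$, since distinct order-$2$ elements cannot be powers of each other, and every member shares the common external neighbourhood $\{e\}$. Hence Lemma~\ref{multiplicity}(i) contributes the Sombor eigenvalue $0$. Piece (b): the $2n$ order-four elements $\{a^{2j+1}b:0\le j\le 2n-1\}$ partition into $n$ disjoint adjacent pairs $\{a^{2j+1}b,a^{2n+2j+1}b\}$ for $0\le j\le n-1$, each pair being a clique $K_{2}$ of degree-$3$ vertices with common external neighbourhood $\{e,a^{2n}\}$. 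Applying Lemma~\ref{multiplicity}(ii) to each of these $n$ pairs contributes the Sombor eigenvalue $-3\sqrt{2}$. Thus the two stated Sombor eigenvalues both arise, and a direct count of the contributions from (a) and (b) through Lemma~\ref{multiplicity} yields the multiplicities claimed in the theorem.

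The main obstacle is not in locating these eigenvalues but in certifying that the multiplicities produced by Lemma~\ref{multiplicity} are not further augmented by coincidental eigenvalues of $0$ or $-3\sqrt{2}$ arising from the remaining part of the spectrum (i.e.\ from the block of the Sombor matrix indexed by $\langle a\rangle\cup\{e\}$). To rule this out, I would use the framework of Theorem~\ref{Characteristic polynomial}: build the $\mathcal{R}$-compressed graph in which each of the pieces in (a) and (b) collapses to a single vertex (with $\mathcal{R}$ chosen so that vertices with identical closed neighbourhoods are identified), and then check that the characteristic polynomial of the associated equitable quotient matrix $N$ has neither $0$ nor $-3\sqrt{2}$ as a root. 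This final verification, combined with the structural argument above, gives the Sombor eigenvalues $0$ and $-3\sqrt{2}$ with exactly the multiplicities stated.
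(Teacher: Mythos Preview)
Your approach is essentially the paper's: decompose the vertices outside $\langle a\rangle$ via \eqref{Eq-3} into the $2n$ order-two reflections (an independent set with common neighbour $e$) and the $n$ disjoint $K_{2}$'s of order-four elements (each with common external neighbourhood $\{e,a^{2n}\}$), then invoke Lemma~\ref{multiplicity}. The paper does exactly this, summarising the structure in a figure and appealing to the same lemma.

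One point you should double-check rather than gloss over: your piece~(a) has $|I|=2n$, so Lemma~\ref{multiplicity}(i) gives $0$ with multiplicity at least $2n-1$, while piece~(b) consists of $n$ cliques of size $2$, giving $-3\sqrt{2}$ with multiplicity at least $n$. These are the \emph{reverse} of the multiplicities $n$ and $2n-1$ in the stated theorem. The paper's own proof in fact reaches ``$0$ with multiplicity at least $2n-1$'' as well, and the identical structural pieces in the enhanced-power-graph case (Theorem~\ref{Enhanced-SD_8n}) produce $0^{[2n-1]}$ and $(-3\sqrt{2})^{[n]}$; so the theorem statement appears to have the two multiplicities transposed, and your sentence ``yields the multiplicities claimed in the theorem'' does not literally hold. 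Your final step---verifying via the equitable quotient matrix that no further copies of $0$ or $-3\sqrt{2}$ occur---is a genuine refinement; the paper stops at the ``at least'' lower bounds and does not certify exactness.
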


\begin{proof}
In view of Equation \ref{Eq-3}, we consider the sets $A = \{ a^i : 1 \leq i \leq 2n -1,~ i \neq 2n ~ \text{and}~ o(a^i)~ \text{is~ even}\}$ and $B = \{ a^i : 1 \leq i \leq 2n -1,~ i \neq 2n ~ \text{and}~ o(a^i)~ \text{is~ odd}\}$. Again by Equation \ref{eq(2)}, we observe that $N[a^{2i +1}b] = \{e, a^{2n}, a^{2i + 1}b, a^{2n + 2i + 1}b\},$ where $1 \leq i \leq n$; $N[a^{2i}b] = \{e, a^{2i}b\},$ where $1 \leq i \leq n$; $ ~ N[a^{2n}] = A \cup \{ e \} \cup \{a^{2i+1}b : 1 \leq i \leq n\}$ and the subgraph induced by the vertices belong to $\langle a \rangle$ is isomorphic to $\mathcal{P}(\mathbb Z_{4n}) =  K_{\phi(4n) + 1} \vee \Gamma_{n}[K_{\phi(d_1)}, K_{\phi(d_2)},\ldots,K_{\phi(d_t)}]$. Therefore, the graph $\mathcal{P}(SD_{8n})$ is shown in Figure \ref{Fig-2}.

\begin{figure}[h!]
\begin{center}
\begin{tikzpicture}[scale=1.5]
\draw[color=red] (0,2) circle [radius=1.5];
\draw (0,-1.6) node{e};

\draw (-0.6,2.1) node{\textbf{A}};
\draw (0.7,2.1) node{\textbf{B}};
\draw (-1.1,2.1)--(-2.9,2.1);
\draw (-3.1,2.2) node{$a^{2n}$};
\draw (-2.9,2.1)--(0,-1.5);
\draw (0,-1.5)--(0,0.5);

\draw (0,-1.5)--(1.5,0);
\draw (1.7,0) node{\textbf{$a^2b$}};

\draw (0,-1.5)--(1.5,-0.4);
\draw (1.7,-0.4) node{\textbf{$a^4b$}};

\draw (1.7,-0.8)--(1.7,-0.9);
\draw (1.7,-1)--(1.7,-1.1);
\draw (1.7,-1.2)--(1.7,-1.3);

\draw (0,-1.5)--(1.5,-1.6);
\draw (1.7,-1.6) node{\textbf{$b$}};
\draw[color=red] (-0.6, 2.1) circle [radius=0.5];
\draw[color=black] (-2.6, -0.1) circle [radius=0.7];
\draw (-2.9,2.1)--(-2.8,0.55);
\draw (-2.25, -0.7)--(0,-1.5);
\draw (-2.6, -0.1) node{$ab,~a^{2n+1}b$};

\draw[color=black] (-4.6, -0.1) circle [radius=0.7];
\draw (-4.6, -0.8)--(0,-1.5);
\draw (-4.6, 0.6)--(-2.9,2.1);
\draw  (-5.6, -0.1)--(-5.7,-0.1);
\draw (-5.5, -0.1)--(-5.4,-0.1);
\draw (-5.8, -0.1)--(-5.9,-0.1);

\draw[color=black] (-6.7, -0.1) circle [radius=0.7];
\draw (-6.7, -0.1) node{$a^{2n-1}b$,};
\draw (-6.7, -0.5) node{$a^{4n-1}b$};
\draw (-2.9,2.1)--(-6.8,0.6);
\draw (-6.6, -0.8)--(0,-1.5);
\draw (-4.6, 0.6)--(-2.9,2.1);
\draw  (-5.6, -0.1)--(-5.7,-0.1);
\draw (-5.5, -0.1)--(-5.4,-0.1);
\draw (-5.8, -0.1)--(-5.9,-0.1);

%
\draw (-4.6, -0.1) node{$a^3b,~a^{2n+3}b$};

\draw[color=red] (0.7,2.1) circle [radius=0.5];
\end{tikzpicture}
\caption{Power graph $\mathcal{P}(SD_{8n})$}\label{Fig-2}
\end{center}
\end{figure}
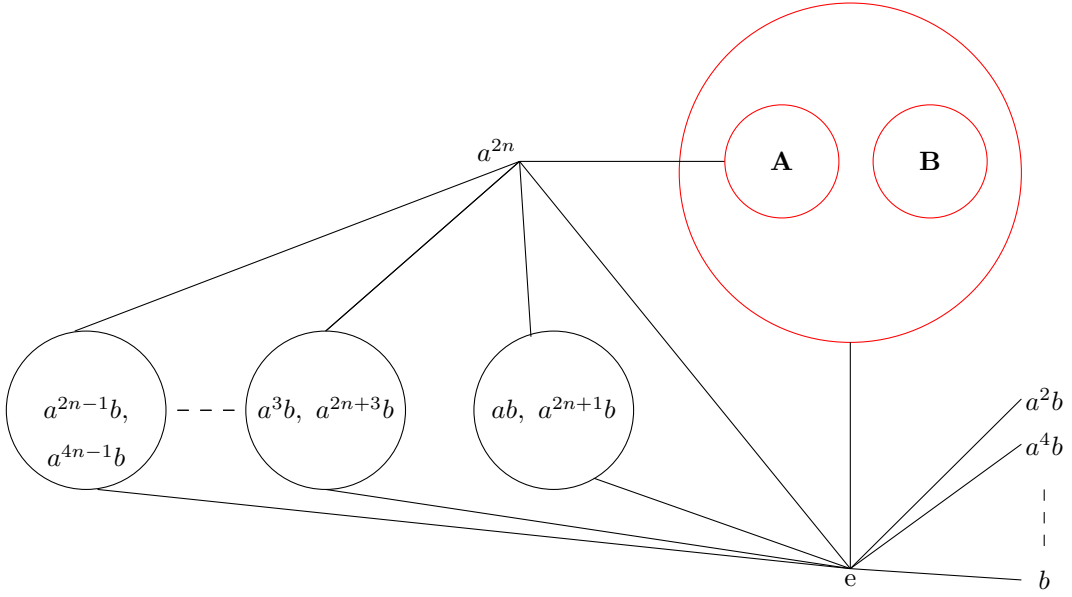

Let $S = \{ a^{2i}b : 1 \leq i \leq n \}$ be a subset of a vertex set $SD_{8n}$. In view of Figure \ref{Fig-2}, the vertices belonging to the set $S$ are independent and $N[x] \setminus S = \{e\}$ for all $x \in S$.  By Lemma \ref{multiplicity}, $0$ is an eigenvalue of multiplicity at least $2n-1$. Moreover, $-3 \sqrt{2}$ is a Sombor eigenvalue of $\mathcal{P}(SD_{8n})$ with multiplicity $2n-1$.

\end{proof}

\subsection{Order Super power Graph}
In this subsection, we obtain the Sombor spectrum of the order super power graph $\mathcal{P}^o(G)$, for the group $G= D_{2n}, Q_{4n}$ and $SD_{8n}.$

\begin{theorem}\label{orderSP D_2n}
  Let $\mathcal{P}^o(D_{2n})$ be the order super power graph of the dihedral group. 

\noindent(i) For odd $n$,  the Sombor spectrum of order super power graph $\mathcal{P}^o(D_{2n})$ is
 \[\displaystyle \begin{pmatrix}
-(n-1)\sqrt{2} & -\bar{d}_1\sqrt{2}  & -\bar{d}_2\sqrt{2} & \cdots & -\bar{d}_t\sqrt{2} & 0 \\
 \phi(n)-1 & \phi(d_1)-1 & \phi(d_2)-1 & & \phi(d_t)-1 & n-1  \\
\end{pmatrix},
\]
and the remaining eigenvalues are the eigenvalues of the following equitable quotient matrix 
\[\begin{bmatrix}
0 & \phi(n)\sqrt{\alpha^2+\beta^2} & \phi(d_1)\sqrt{\alpha^2+ \bar{d_1}^2}  & \phi(d_2)\sqrt{\alpha^2+\bar{d_2}^2} & \cdots & \phi(t) \sqrt{\alpha^2+ \bar{d_t}^2} & n\sqrt{\alpha^2+n^2} \\
\sqrt{\alpha^2+\beta^2} & (\phi(n)-1)\beta\sqrt{2} & \phi(d_1)\sqrt{\beta^2+ \bar{d_1}^2} & \phi(d_2)\sqrt{\beta^2+\bar{d_2}^2} & \cdots & \phi(d_t)\sqrt{\beta^2+ \bar{d_t}^2} & 0\\
\sqrt{\alpha^2+ \bar{d_1}^2} & \phi(n)\sqrt{\beta^2+ \bar{d_1}^2} & (\phi(d_1)-1)\bar{d_1}\sqrt{2} & \phi(d_2)a_{12} & \cdots & \phi(d_t)a_{1t} & 0 \\
\sqrt{\alpha^2+\bar{d_2}^2} & \phi(n)\sqrt{\beta^2+\bar{d_2}^2} & \phi(d_1)a_{21} &  (\phi(d_2)-1)\bar{d_2}\sqrt{2} & \cdots & \phi(d_t)a_{2t} & 0 \\
\vdots & \vdots & \vdots & \vdots & \ddots & \vdots & \vdots\\
\sqrt{\alpha^2+\bar{d_t}^2} & \phi(n)\sqrt{\beta^2+\bar{d_t}^2} &\phi(d_1)a _{t1} & \phi(d_2)a_{t2} & \cdots & \bar{d_t}\sqrt{2}(\phi(d_t)-1)& 0\\
\sqrt{\alpha^2+n^2} & 0 & 0 & 0 & \cdots & 0 &  (n-1)n\sqrt{2} \\
\end{bmatrix},\]
where $\alpha= 2n-1, \beta=n-1$, $\bar{d_i}= d(a^j)$ for some $a^j$ with $o(a^j)= d_i$, and $a_{ij}=
\begin{dcases}
 \sqrt{\bar{d_i}^2+\bar{d_j}^2}, & d_i \mid d_j; \\
 0, & otherwise.
\end{dcases}$

\noindent(ii) For even $n$, the Sombor spectrum of order super power graph $\mathcal{P}^o(D_{2n})$ is 
\[ \begin{pmatrix}
-(2n-1)\sqrt{2} & -\bar{d}_1\sqrt{2}  & -\bar{d}_2\sqrt{2} & \cdots & -\bar{d}_t\sqrt{2} & -\bar{d}_0\sqrt{2} \\
 \phi(n) & \phi(d_1)-1 & \phi(d_2)-1 & & \phi(d_t)-1 & n  \\
\end{pmatrix}
\]
and the remaining eigenvalues are the eigenvalues of the following equitable quotient matrix
\[
\begin{bmatrix}
 \phi(n)\alpha\sqrt{2} & \phi(d_1)\sqrt{\alpha^2+\bar{d_1}^2}  & \phi(d_2)\sqrt{\alpha^2+\bar{d_2}^2}  & \cdots & \phi(d_t)\sqrt{\alpha^2+\bar{d_t}^2} &  (n+1)\sqrt{\alpha^2+\bar{d}_o^2} \\
(\phi(n)+1)\sqrt{\alpha^2+\bar{d_1}^2} & (\phi(d_1)-1)\bar{d_1}\sqrt{2} & \phi(d_2)a_{12} & \cdots & \phi(d_t)a_{1t} & (n+1)a_{10}\\
((\phi(n)+1))\sqrt{\alpha^2+\bar{d_2}^2} & \phi(d_1)a_{21} &  (\phi(d_2)-1)\bar{d_2}\sqrt{2} & \cdots & \phi(d_t)a_{2t} & (n+1)a_{20} \\
\vdots & \vdots & \vdots & \ddots & \vdots & \vdots\\
((\phi(n)+1))\sqrt{\alpha^2+\bar{d_t}^2} & \phi(d_1)a_{t1} & \phi(d_2)_{t2} & \cdots & (\phi(d_t)-1)\bar{d_t}\sqrt{2} & (n+1)a_{t0} \\
(\phi(n)+1)\sqrt{\alpha^2+\bar{d}_o^2}  & \phi(d_1)a_{01} & \phi(d_2)a_{02} & \cdots & \phi(d_t)a_{0t} & n\bar{d_0}\sqrt{2},\end{bmatrix}\]
where $\alpha= 2n-1$, $\bar{d_i}= d(a^j)$ for some $a^j$ with $o(a^j)= d_i$, and $a_{ij}=
\begin{dcases}
 \sqrt{\bar{d_i}^2+\bar{d_j}^2}, & d_i \mid d_j; \\
 0, & otherwise.
\end{dcases}$
\end{theorem}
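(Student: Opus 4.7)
The plan is to apply Theorem \ref{Characteristic polynomial} after establishing the structure of $\mathcal{P}^o(D_{2n})$ as a generalized join of complete graphs indexed by the order-equivalence classes of $D_{2n}$. The key machinery is already in place: Theorem \ref{isomorphism} gives $\mathcal{P}^o(D_{2n}) \cong \Im_{\mathcal{P}^o(D_{2n})}[K_{n_1}, \ldots, K_{n_k}]$, Lemma \ref{multiplicity}(ii) peels off the clique eigenvalues, and Theorem \ref{equitable matrix} identifies the remaining eigenvalues as those of the equitable quotient matrix. The real content is the bookkeeping of the classes, their sizes, and the adjacencies in the compressed graph $\Im_{\mathcal{P}^o(D_{2n})}$.

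First I would list the order classes of $D_{2n}$ explicitly. Using \eqref{eq(1)}, for odd $n$ these are $\{e\}$, the generator class $\{a^j : \gcd(j,n)=1\}$ of size $\phi(n)$, one class $\{a^j : o(a^j)=d_i\}$ of size $\phi(d_i)$ for each proper nontrivial divisor $d_i\mid n$, and the order-$2$ class consisting of all $n$ reflections. For even $n$ the only change is that $a^{n/2}$ has order $2$ and therefore merges into the reflection class, yielding an order-$2$ class of size $n+1$; I denote its common degree in $\mathcal{P}^o(D_{2n})$ by $\bar d_0$.

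Next I would read off the compressed graph $\Im_{\mathcal{P}^o(D_{2n})}$ from the power graph $\mathcal{P}(D_{2n})$ described in Lemma \ref{P(D2n)-structure}. Two order classes $C,C'$ are adjacent exactly when some $x\in C$ and $y\in C'$ are adjacent in $\mathcal{P}(D_{2n})$. For odd $n$, since reflections in $\mathcal{P}(D_{2n})$ touch only $e$, the reflection class is a pendant attached to $\{e\}$, while the rotation classes are joined exactly by the divisibility graph $\Gamma_n$ with the generator class universal. For even $n$, because each generator of $\langle a\rangle$ has $a^{n/2}$ as a power, the order-$2$ class (now size $n+1$) is joined to every class of even order in $\langle a\rangle$ as well as to $\{e\}$; the divisibility pattern among the remaining rotation classes is unchanged. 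Once these adjacencies are fixed, degree computation is routine: the generator class has degree $n-1$ in the odd case and $2n-1$ in the even case (after merging with reflections enlarges the order-$2$ class), and each $\bar d_i$ equals the sum $\sum_{d_j : d_i\mid d_j \text{ or } d_j\mid d_i}\phi(d_j)-1$ plus contributions from the order-$2$ block and from $e$.

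With structure and degrees in hand, Lemma \ref{multiplicity}(ii) immediately produces the clique eigenvalues: $-(n-1)\sqrt{2}$ with multiplicity $\phi(n)-1$ (odd case) or $-(2n-1)\sqrt{2}$ with multiplicity $\phi(n)$ after the merged class is accounted for (even case), together with $-\bar d_i\sqrt{2}$ of multiplicity $\phi(d_i)-1$ for each divisor class, and the $-\bar d_0\sqrt{2}$ eigenvalue from the reflection/order-$2$ clique. The remaining eigenvalues then come from the equitable quotient matrix of Theorem \ref{Characteristic polynomial}: its diagonal entries are $(n_i-1)\bar d_i\sqrt{2}$ and its off-diagonal entries are $n_j\sqrt{\bar d_i^{\,2}+\bar d_j^{\,2}}$ when classes $i,j$ are adjacent in $\Im_{\mathcal{P}^o(D_{2n})}$, and $0$ otherwise, which reproduces exactly the matrices stated in parts (i) and (ii).

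The main obstacle will be the even-$n$ case: the merging of $a^{n/2}$ with the reflections changes both the size of the order-$2$ class and the adjacencies in the compressed graph (now every even-order rotation class is joined to it), and it forces a recomputation of the degree of every class whose order is divisible by $2$. Careful case analysis on parity and on whether $d_i$ is even is needed to verify that the off-diagonal block indexed by the reflection column is $\sqrt{\alpha^{2}+\bar d_i^{\,2}}$ with the correct multiplicity $(n+1)$, and that $\bar d_0$ in the diagonal block matches $n\bar d_0\sqrt{2}$. Once those degrees and adjacencies are locked in, the formulas follow from Theorem \ref{Characteristic polynomial} with no further calculation.
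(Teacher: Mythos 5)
Your proposal is correct and follows essentially the same route as the paper: determine the order classes and the structure $\mathcal{P}^o(D_{2n}) = K_1 \vee \big( K_{\phi(n)} \vee \Gamma_{n}[K_{\phi(d_1)},\ldots,K_{\phi(d_t)}] \cup K_n \big)$ for odd $n$ (respectively $K_{1+\phi(n)} \vee \Gamma_{n}[K_{n+1},K_{\phi(d_1)},\ldots,K_{\phi(d_t)}]$ for even $n$), then peel off the clique eigenvalues via Lemma \ref{multiplicity}(ii) and obtain the rest from the equitable quotient matrix of Theorem \ref{Characteristic polynomial}, exactly as the paper does by reducing to the argument of Theorem \ref{power D_2n}. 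Note that your value $-\bar d_0\sqrt{2}=-n\sqrt{2}$ (multiplicity $n-1$) for the reflection clique in the odd case is what the structure actually forces; the eigenvalue $0$ displayed in the paper's statement of part (i) appears to be a carry-over from the ordinary power graph, where the reflections form an independent set rather than a clique.
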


\begin{proof}\noindent(i) First we explore the structure of $\mathcal{P}^o(D_{2n})$. In view of the representation of $D_{2n}$, we have $o(a^ib) = 2$ for all $i$, where $1 \leq i \leq n$. It follows that the subgraph induced by the vertices of the form $a^ib$ is complete. Further, note that $\langle a^ib \rangle = \{e, a^ib\}$ for all $i$ gives $a^i \nsim a^jb$ for all $1 \leq i, j \leq n$ in $\mathcal{P}(D_{2n})$. Since $n$ is odd so that $a^i \nsim a^jb$ for all $1 \leq i, j \leq n$ in $\mathcal{P}^o(D_{2n})$. Also,  $x \sim y$ in $\mathcal{P}(\mathbb Z_n)$ if and only if $x \sim y$ in $\mathcal{P}^o(\mathbb Z_n)$ (see Theorem \ref{P(G)=P^o(G)}). Therefore, we have $\mathcal{P}^o(\mathbb Z_n) = \mathcal{P}(\mathbb Z_n)$ which is isomorphic to the subgraph of $\mathcal{P}^o(D_{2n})$ induced by the vertices belongs to $\langle a \rangle$. By using representation of $\mathcal{P}(\mathbb Z_n)$ given in the proof of Lemma \ref{P(D2n)-structure}, we have
\[ 
\mathcal{P}^o(D_{2n}) = K_1 \vee \Big( K_{\phi(n)} \vee \Gamma_{n}[K_{\phi(d_1)},K_{\phi(d_2)},\ldots,K_{\phi(d_t)}] \cup {K}_n \Big),
\]
where $\Gamma_n$ is a graph with vertex $V(\Gamma_n) = \{ d_i : 1, n \neq d_i | n,~ 1 \leq i \leq t \}$ and two distinct  vertices $d_i$ and $d_j$ are adjacent in $\Gamma_n$ if one of them divides other. By using the same argument used in Theorem \ref{power D_2n}, we have the result. 
\noindent(ii) As $n$ is even, $o(a^{\frac{n}{2}}) = 2$. Therefore, $N[a^{\frac{n}{2}}] = N[a^ib]$ for all $i$, where $1 \leq i \leq n$. Thus, we have  
\[ 
\mathcal{P}^o(D_{2n}) = K_{1+\phi(n)} \vee \Gamma_{n}[ K_{n+1},K_{\phi(d_1)},K_{\phi(d_2)},\ldots,K_{\phi(d_t)}],
\]
where $\Gamma_n$ is a graph with vertex $V(\Gamma_n) = \{ d_i : 1, n \neq d_i | n,~ 1 \leq i \leq t, i \neq 2 \}$ and two distinct  vertices $d_i$ and $d_j$ are adjacent in $\Gamma_n$ if one of them divides other. Again, by using the same argument used in Theorem \ref{power D_2n}, we have the result. 
\end{proof}

\begin{theorem}\label{power Q_4n}
 Let $G$ be a generalized quaternion group $Q_{4n}$. Then we have the following.
\begin{enumerate}
\item[\rm (i)] For even $n$, $-\sqrt{2} (2n + 3 + |C|)$ is a Sombor eigenvalues of $\mathcal{P}^o(Q_{4n})$  with multiplicity at least $2n + 1$, where $C = \{ a^i : 1 \leq i \leq 2n -1,~ \text{and}~ 4\mid o(a^i) \}$ {(\rm see Equation \ref{Eq-3})}. 

\item[\rm (ii)] For odd $n$, $-\sqrt{2}(2n + 1)$ is a Sombor eigenvalue of $\mathcal{P}^o(Q_{4n})$ with multiplicity of at least $2n-1$. 
\end{enumerate} 
\end{theorem}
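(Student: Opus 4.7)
The plan is to apply Lemma~\ref{multiplicity}(ii) to the set $T$ of all order-$4$ elements of $Q_{4n}$, which forms a single $B_o$-equivalence class, and then to read off the eigenvalue from the common degree of the vertices of $T$.

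First, I would use the presentation of $Q_{4n}$ to compute $(a^jb)^2 = a^j b a^j b = a^j a^{-j} b^2 = a^n$, which has order $2$, so $o(a^jb)=4$ for every $0 \le j \le 2n-1$; this gives $2n$ elements of order $4$ outside $\langle a \rangle$. Inside $\langle a\rangle$ the elements of order $4$ are those $a^i$ with $\gcd(2n,i)=n/2$; these exist only when $n$ is even, in which case they are precisely $a^{n/2}$ and $a^{3n/2}$. Hence $|T|=2n$ for odd $n$ and $|T|=2n+2$ for even $n$. Since $T$ is a single $B_o$-class, Theorem~\ref{isomorphism} forces $\mathcal{P}^o(Q_{4n})[T]$ to be the complete graph $K_{|T|}$, and the $B_o$-super construction makes any two vertices of $T$ have identical open neighborhoods in $\mathcal{P}^o(Q_{4n})$: a vertex $z$ is adjacent to some (equivalently, to every) element of $T$ precisely when the class $[z]_o$ is joined to the vertex $T$ of the compressed graph.

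Applying Lemma~\ref{multiplicity}(ii) with $S=T$ then produces $-d\sqrt{2}$ as a Sombor eigenvalue with multiplicity at least $|T|-1$, where $d$ denotes the common degree of a vertex of $T$. This already yields the claimed multiplicities $2n-1$ in (ii) and $2n+1$ in (i). To compute $d$, I would enumerate the $B_o$-classes joined to $T$ in the compressed graph, using the neighborhood formula from the proof of Theorem~\ref{power Q_2n}: $N_{\mathcal{P}(Q_{4n})}[a^jb]=\{e,a^n,a^jb,a^{n+j}b\}$, together with the observation that for any $a^i\in\langle a\rangle$ with $4\mid o(a^i)$, the cyclic subgroup $\langle a^i\rangle$ contains an element of order $4$. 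This shows that the classes joined to $T$ are $\{e\}$, $\{a^n\}$, and (only when $n$ is even) the classes $\{a^i:o(a^i)=k\}$ for each divisor $k\mid 2n$ with $4\mid k$.

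For odd $n$, no divisor of $2n$ is a multiple of $4$, so no extra classes arise and $d=(|T|-1)+2=2n+1$, proving (ii). For even $n$, the extra classes together contribute the vertices of $C$, and combining this with the overlap $T\cap C=\{a^{n/2},a^{3n/2}\}$ produces the value of $d$ recorded in (i). The main technical obstacle is the degree count for even $n$: one must identify precisely which divisors of $2n$ give rise to order classes adjacent to $T$ in the compressed graph and carefully account for the intersection $T\cap C$ in order to assemble the final constant.
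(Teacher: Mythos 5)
Your approach is the same as the paper's: the paper takes its set $D$ to be exactly your $T$ of order-$4$ elements (all $a^jb$, together with $a^{n/2},a^{3n/2}$ when $n$ is even), observes that $D$ induces a clique whose vertices share a common closed neighbourhood, and applies Lemma \ref{multiplicity}(ii); the multiplicities $|T|-1=2n+1$ (even $n$) and $2n-1$ (odd $n$), and the odd-$n$ degree $2n+1$, come out identically in both arguments.

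The one step you defer --- ``carefully account for the intersection $T\cap C$'' in the even case --- is worth completing, because doing that computation honestly does \emph{not} return the constant recorded in the statement. With $C$ as defined (so that $a^{n/2},a^{3n/2}\in C\cap T$), one gets $N[x]=\{e,a^n\}\cup C\cup T$ for $x\in T$, hence $d=(|T|-1)+|\{e,a^n\}|+|C\setminus T|=(2n+1)+2+(|C|-2)=2n+1+|C|$, not $2n+3+|C|$. A sanity check with $n=2$: $\mathcal{P}^o(Q_8)=K_8$, so every degree equals $7=2n+1+|C|$ with $C=\{a,a^3\}$, whereas $2n+3+|C|=9$. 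The paper's own proof arrives at $2n+3+|C|$ only by implicitly treating $C$ and $D$ as disjoint (equivalently, by reading $C$ as the elements of $\langle a\rangle$ whose order is divisible by $4$ and strictly greater than $4$). So your outline is sound and, if anything, more careful than the paper's; but if you finish it as written you will land on the eigenvalue $-\sqrt{2}\,(2n+1+|C|)$, and either that constant or the definition of $C$ in part (i) has to be adjusted for your conclusion to match the stated one.
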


\begin{proof}(i) In view of Equation \ref{eq(2)}, we consider the sets $A = \{ a^i : 1 \leq i \leq 2n -1,~ i \neq n, ~ 2 |o(a^i) ~ \text{and}~ 4 \nmid  o(a^i)\}$, $B = \{ a^i : 1 \leq i \leq 2n -1 ~ {\rm and}~o(a^i)~ \text{is~ odd}\}$, $C = \{ a^i : 1 \leq i \leq 2n -1,~ \text{and}~ 4\mid o(a^i) \}$ and $D = \{a^ib : ~1 \leq i \leq 4n \}$$ \cup \{a^{\frac{n}{2}},a^{\frac{3n}{2}}\}$. Again, by Equation \ref{eq(2)}, we observe that $o(a^{\frac{n}{2}}) = o(a^{\frac{3n}{2}}) = o(a^ib) = 4$ for all $i$, where $1 \leq i \leq 2n$. This implies that the subgraph induced by the vertices belongs to the set $D$ forms a clique and $N[x] = N[y]$ for all $x, y \in D$. In addition, we note that $a^ib \nsim a^j$ for all $1 \leq i,j \leq 2n$ and $j \notin \{\frac{n}{2}, \frac{3n}{2}, n, 2n\}$ in $\mathcal{P}^o(Q_{4n})$. Consequently, we get $N[x] = C \cup D \cup \{e, a^{n}\}$ for all $x \in D$, $N[a^n] =  A \cup C \cup D \cup  \{e, a^n \}$ and the subgraph induced by the vertices belongs to $\langle a \rangle$ is isomorphic to $\mathcal{P}(\mathbb Z_{2n}) =  K_{\phi(2n) + 1} \vee \Gamma_{2n}[K_{\phi(d_1)}, K_{\phi(d_2)},\ldots,K_{\phi(d_t)}]$. Also, $a^n \nsim x$ for all $x \in B$. Therefore, we obtain the order super power graph $\mathcal{P}(Q_{4n})$ which is given in Figure \ref{Fig-3}. Thus,  $-\sqrt{2} (2n + 3 + |C|)$ is a Sombor eigenvalue of $\mathcal{P}^o(Q_{4n})$ with multiplicity at least $2n + 1$.

\begin{figure}[h!]
\begin{center}
\begin{tikzpicture}[scale=1.5]
\draw[color=red] (0,2) circle [radius=1.5];
\draw (0,-1.6) node{e};

\draw (-0.6,2.1) node{\textbf{A}};
\draw (0.7,2.1) node{\textbf{B}};
\draw (-1.1,2.1)--(-2.9,2.1);
\draw (-2.8,2.3) node{$a^n$};
\draw (-2.9,2.1)--(0,-1.5);
\draw (0,-1.5)--(0,0.5);
\draw[color= red] (-0.6, 2.1) circle [radius=0.5];
\draw[color=black] (-2.6, -0.1) circle [radius=0.7];
\draw (-2.9,2.1)--(-2.8,0.55); 
\draw (-2.25, -0.7)--(0,-1.5);
\draw (-2.6, -0.1) node{$D$};
\draw (-0.3,1.4)--(-2.9,2.1); 



%
\draw (-2.8,0.55)--(-0.3,1.4); 
\draw[color= red] (0.1,1.1) circle [radius=0.5]; 
\draw (0.1,1.1) node{\textbf{C}};
\draw[color= red] (0.7,2.1) circle [radius=0.5];
\end{tikzpicture}
\caption{Power graph $\mathcal{P}^o(Q_{4n})$, where $n$ is even}\label{Fig-3}
\end{center}
\end{figure}
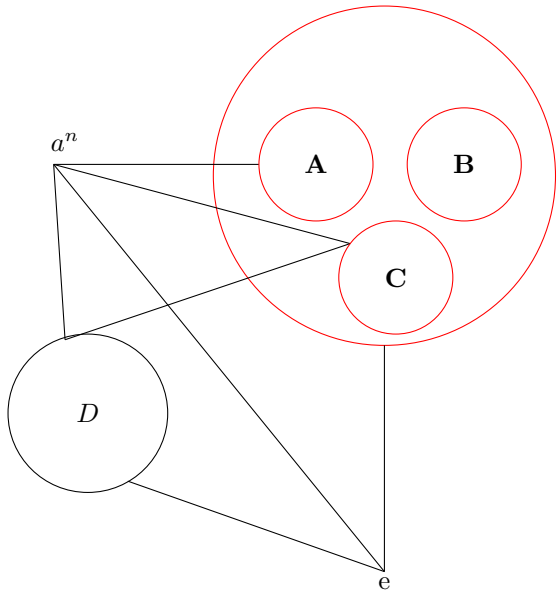
 \noindent (ii) As $n$ is odd, it implies that $ \langle a \rangle$ has no element of order $4$. 
 Now, consider the sets $A = \{ a^i : 1 \leq i \leq 2n -1,~ i \neq n \; {\rm and}~o(a^i)~ \text{is~ even}\}$, $B = \{ a^i : 1 \leq i \leq 2n -1 ~ {\rm and}~o(a^i)~ \text{is~ odd}\}$ and $D = \{a^ib : ~1 \leq i \leq 2n \}$. Again, by Equation \ref{eq(2)}, we observe that $N[x] = \{e, a^n\} \cup D$ for all $x \in D$, $N[a^n] = \{e\} \cup D \cup A$ and the subgraph induced by the vertices belong to $\langle a \rangle$ is isomorphic to $\mathcal{P}(\mathbb Z_{2n}) =  K_{\phi(2n) + 1} \vee \Gamma_{2n}[K_{\phi(d_1)}, K_{\phi(d_2)},\ldots,K_{\phi(d_t)}]$. Therefore, the order super power graph $\mathcal{P}(Q_{4n})$ is given in Figure \ref{Fig-4}.

\begin{figure}[h!]
\begin{center}
\begin{tikzpicture}[scale=1.5]
\draw[color=red] (0,2) circle [radius=1.5];
\draw (0,-1.6) node{e};

\draw (-0.6,2.1) node{\textbf{A}};
\draw (0.7,2.1) node{\textbf{B}};
\draw (-1.1,2.1)--(-2.9,2.1);
\draw (-2.8,2.3) node{$a^{n}$};
\draw (-2.9,2.1)--(0,-1.5);
\draw (0,-1.5)--(0,0.5);
\draw[color=red] (-0.6, 2.1) circle [radius=0.5];
\draw[color=black] (-2.6, -0.1) circle [radius=0.7];
\draw (-2.9,2.1)--(-2.8,0.55); 
\draw (-2.25, -0.7)--(0,-1.5);
\draw (-2.6, -0.1) node{$D$};



%

\draw[color=red] (0.7,2.1) circle [radius=0.5];
\end{tikzpicture}






\caption{Power graph $\mathcal{P}^o(Q_{4n})$, where $n$ is odd}\label{Fig-4}
\end{center}
\end{figure}
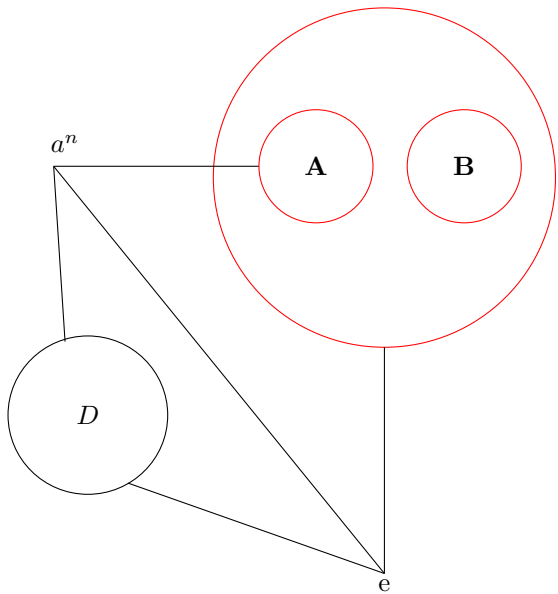
 In view of Figure \ref{Fig-4}, the vertices belonging to the set $D$ form a clique of size $2n$ and $N[x] \setminus S = \{e, a^n\}$ for all $x \in D$.  By Theorem \ref{multiplicity}, $-\sqrt{2}(2n + 1)$ is an eigenvalue of multiplicity of at least $2n-1$. 
\end{proof}

\begin{theorem}\label{power SD_8n}
 Let $G$ be a semidihedral  group $SD_{8n}$. Then  $-\sqrt{2} (4n + 3 + |A| + |C|)$ and $-\sqrt{2} (4n + 3 + |C|)$ are the Sombor eigenvalues of $\mathcal{P}(SD_{8n})$ with multiplicity at least $2n$ and $2n + 1$, respectively, where $C = \{ a^i : 1 \leq i \leq 2n -1,~ \text{and}~ 4\mid o(a^i) \}$ {\rm (see Equation \ref{Eq-3})}. 


\end{theorem}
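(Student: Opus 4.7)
The plan is to adapt the argument of Theorem \ref{power Q_4n} by exhibiting two cliques in $\mathcal{P}^o(SD_{8n})$ whose members share a common closed neighborhood, and then invoking Lemma \ref{multiplicity}(ii). Using Equation \eqref{Eq-3}, I would first partition $SD_{8n}$ by element order and isolate the order-$2$ set
\[
S_2 := \{a^{2n}\} \cup \{a^{2i}b : 0 \leq i \leq 2n-1\},
\]
of size $|S_2|=2n+1$, and the order-$4$ set
\[
S_4 := \{a^n, a^{3n}\} \cup \{a^{2j+1}b,\, a^{2n+2j+1}b : 0 \leq j \leq n-1\},
\]
of size $|S_4|=2n+2$. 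Each of $S_2$ and $S_4$ is a clique in $\mathcal{P}^o(SD_{8n})$ by the same-order rule, and the task reduces to showing that all members of $S_2$ (respectively $S_4$) share a common open neighborhood outside the clique.

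The core step is the neighborhood analysis. For $x\in S_2$ the natural witness is $a^{2n}$: since $a^{2n}\in\langle a^j\rangle$ iff $2\mid o(a^j)$ and $a^{2n}\in Q_j$ for every $j$, the order-super rule yields that $x$ is adjacent in $\mathcal{P}^o(SD_{8n})$ to $e$, to every cyclic element of even order (the sets $A$ and $C$, where $A$ collects the elements of $\langle a\rangle\setminus\{a^{2n}\}$ of order $\equiv 2\pmod 4$ and $C$ collects the elements of $\langle a\rangle$ of order strictly divisible by $4$), and to every element of $S_4$, but to no odd-order cyclic element (since no order-$2$ element can sit inside a cyclic subgroup of odd order). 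This produces the common degree $d_2=1+2n+(2n+2)+|A|+|C|=4n+3+|A|+|C|$. Symmetrically, for $x\in S_4$ the witness is the cyclic order-$4$ element $a^n$, which is $\mathcal{P}$-adjacent precisely to those $\langle a\rangle$-elements whose order is divisible by $4$ (together with $a^{2n}$); combined with the generator-of-$Q_j$ structure of each $a^{2j+1}b$, this yields that $x$ is adjacent to $e$, to all of $S_2$, and to every $a^j\in C$, but to no element of $A$ or of odd order. Hence the common degree is $d_4=1+(2n+1)+(2n+1)+|C|=4n+3+|C|$.

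With the common-neighborhood property established, Lemma \ref{multiplicity}(ii) immediately produces $-\sqrt{2}(4n+3+|A|+|C|)$ as a Sombor eigenvalue of $\mathcal{P}^o(SD_{8n})$ of multiplicity at least $|S_2|-1=2n$, and $-\sqrt{2}(4n+3+|C|)$ of multiplicity at least $|S_4|-1=2n+1$. The main obstacle is the neighborhood analysis itself: one must carefully verify that the central element $a^{2n}$ and each non-central $a^{2i}b$ share the same open neighborhood outside $S_2$, and analogously that the cyclic order-$4$ elements $a^n, a^{3n}$ share the same neighborhood outside $S_4$ with each $a^{2j+1}b$, despite the very different local $\mathcal{P}$-neighborhoods of the central and non-central representatives. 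The equality holds because the order-super rule collapses these differences via the witnesses $a^{2n}$ and $a^n$, and because no order-$2$ or order-$4$ element can ever be $\mathcal{P}$-adjacent to an odd-order cyclic element, ruling out the spurious adjacencies that would otherwise break the common-neighborhood condition.
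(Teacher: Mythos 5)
Your proposal is correct and follows essentially the same route as the paper: your order-$2$ clique $S_2$ and order-$4$ clique $S_4$ are exactly the paper's sets $E$ and $D$, the neighborhood computations $N[x]=\{e\}\cup A\cup C\cup S_2\cup S_4$ for $x\in S_2$ and $N[x]=\{e\}\cup C\cup S_2\cup S_4$ for $x\in S_4$ agree with the paper's, and both arguments conclude via Lemma \ref{multiplicity}(ii). The only difference is cosmetic (the paper draws the structure in Figure \ref{Fig-5} rather than arguing directly with the witnesses $a^{2n}$ and $a^{n}$ as you do).
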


\begin{proof} In view of Equation \ref{eq(2)}, we consider the sets $A = \{ a^i : 1 \leq i \leq 4n -1,~ i \neq 2n, ~ 2 |o(a^i) ~ \text{and}~ 4 \nmid  o(a^i)\}$, $B = \{ a^i : 1 \leq i \leq 4n -1 ~ {\rm and}~o(a^i)~ \text{is~ odd}\}$, $C = \{ a^i : 1 \leq i \leq 2n -1,~ \text{and}~ 4\mid o(a^i) \} \setminus \{a^n, a^{3n}\}$, $D = \{a^{2i+1}b : ~1 \leq i \leq n \} \cup \{a^n, a^{3n}\}$ and $E = \{a^{2i}b : ~1 \leq i \leq n \} \cup \{a^{2n}\}$ . Again, by Equation \ref{eq(2)}, we observe that $o(a^{n}) = o(a^{3n}) = o(a^{2i+1}b) = 4$ for all $i$, where $1 \leq i \leq 2n$ and $o(a^{2n}) = o(a^{2j}b) = 2$ for all $j$, where $1 \leq j \leq 2n$ . This implies that the subgraph induced by the vertices belongs to the set $D$ and $E$ form cliques of sizes $2n+2$ and $2n+1$, respectively. In addition, we note that $a^ib \nsim a^j$ for all $1 \leq i,j \leq 2n$ and $i \notin \{n, 2n, 3n, 4n\}$ in $\mathcal{P}(SD_{8n})$. Consequently, we get  $N[x] =C \cup D \cup E \cup \{e\}$ for all $x\in D$ and $N[y] = C \cup D\cup E \cup A \cup \{e\}$ for all $y \in E$ and the subgraph induced by the vertices belongs to $\langle a \rangle$ is isomorphic to $\mathcal{P}(\mathbb Z_{4n}) =  K_{\phi(4n) + 1} \vee \Gamma_{4n}[K_{\phi(d_1)}, K_{\phi(d_2)},\ldots,K_{\phi(d_t)}]$. Therefore, the order super power graph $\mathcal{P}(SD_{8n})$ is given in Figure \ref{Fig-5}. Thus, by Lemma \ref{multiplicity},  $-\sqrt{2} (4n + 3 + |A| + |C|)$, and $-\sqrt{2} (4n + 3 + |C|)$ are the Sombor eigenvalues of $\mathcal{P}(SD_{8n})$ with multiplicity at least $2n$ and $2n + 1$, respectively.
\begin{figure}[h!]
\begin{center}
\begin{tikzpicture}[scale=1.5]
\draw[color=red] (0,2) circle [radius=1.5];
\draw (0,-1.6) node{e};

\draw (-0.6,2.1) node{\textbf{A}};
\draw (0.7,2.1) node{\textbf{B}};
\draw (-1.1,2.1)--(-2.9,2.1);
\draw (-3.4,2.6) node{E};
\draw (-2.9,2.1)--(0,-1.5);
\draw (0,-1.5)--(0,0.5);
\draw[color=red] (-0.6, 2.1) circle [radius=0.5];
\draw[color=black] (-2.6, -0.1) circle [radius=0.7];
\draw (-2.9,2.1)--(-2.8,0.55); 
\draw (-2.25, -0.7)--(0,-1.5);
\draw (-2.6, -0.1) node{$D$};
\draw (-0.3,1.4)--(-2.9,2.1); 


\draw[color=red] (-0.6, 2.1) circle [radius=0.5];
\draw[color=black] (-3.4, 2.6) circle [radius=0.7];
\draw (-2.9,2.1)--(-2.8,0.55); 
\draw (-2.25, -0.7)--(0,-1.5);
\draw (-2.6, -0.1) node{$D$};
\draw (-0.3,1.4)--(-2.9,2.1); 



%
\draw (-2.8,0.55)--(-0.3,1.4); 
\draw[color=red] (0.1,1.1) circle [radius=0.5]; 
\draw (0.1,1.1) node{\textbf{C}};
\draw[color=red] (0.7,2.1) circle [radius=0.5];
\end{tikzpicture}
\caption{Power graph $\mathcal{P}^o(SD_{8n})$ }\label{Fig-5}
\end{center}
\end{figure}
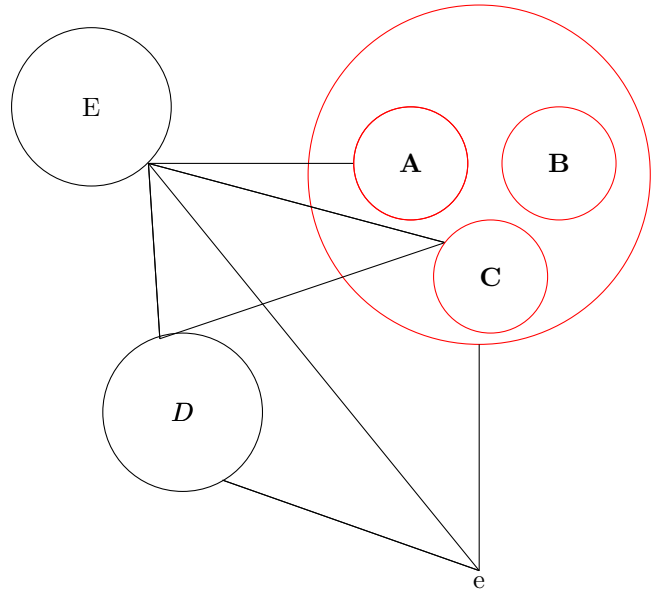

\end{proof}

\subsection{Conjugacy super power graph}
In this subsection, we  discuss the Sombor spectrum of conjugacy super power graph $\mathcal{P}^c(D_{2n})$, for $G= D_{2n}, Q_{4n}$ and $SD_{8n}.$
 
\begin{theorem}\label{conjugacySP D_2n}
 Let $\mathcal{P}^c(D_{2n})$ be the conjugacy superpower graph of the dihedral group. Then we have the following statements.
\begin{enumerate}
\item[\rm (i)] If $n$ is odd, then the Sombor spectrum is
 \[\displaystyle \begin{pmatrix}
-(n-1)\sqrt{2} & -\bar{d}_1\sqrt{2}  & -\bar{d}_2\sqrt{2} & \cdots & -\bar{d}_t\sqrt{2} & 0 \\
 \phi(n)-1 & \phi(d_1)-1 & \phi(d_2)-1 & & \phi(d_t)-1 & n-1  \\
\end{pmatrix},
\]
and the remaining eigenvalues are the roots of the following equitable quotient matrix 
\[\begin{bmatrix}
0 & \phi(n)\sqrt{\alpha^2+\beta^2} & \phi(d_1)\sqrt{\alpha^2+\bar{d_1}^2}  & \phi(d_2)\sqrt{\alpha^2+\bar{d_2}^2} & \cdots & \phi(t) \sqrt{\alpha^2+\bar{d_t}^2} & n\sqrt{\alpha^2+n^2} \\
\sqrt{\alpha^2+\beta^2} & (\phi(n)-1)\beta\sqrt{2} & \phi(d_1)\sqrt{\beta^2+\bar{d_1}^2} & \phi(d_2)\sqrt{\beta^2+\bar{d_2}^2} & \cdots & \phi(d_t)\sqrt{\beta^2+\bar{d_t}^2} & 0\\
\sqrt{\alpha^2+\bar{d_1}^2} & \phi(n)\sqrt{\beta^2+\bar{d_1}^2} & (\phi(d_1)-1)\bar{d_1}\sqrt{2} & \phi(d_2)a_{12} & \cdots & \phi(d_t)a_{1t} & 0 \\
\sqrt{\alpha^2+\bar{d_2}^2} & \phi(n)\sqrt{\beta^2+\bar{d_2}^2} & \phi(d_1)a_{21} &  (\phi(d_2)-1)\bar{d_2}\sqrt{2} & \cdots & \phi(d_t)a_{2t} & 0 \\
\vdots & \vdots & \vdots & \vdots & \ddots & \vdots & \vdots\\
\sqrt{\alpha^2+\bar{d_t}^2} & \phi(n)\sqrt{\beta^2+\bar{d_t}^2} &\phi(d_1)a _{t1} & \phi(d_2)a_{t2} & \cdots & \bar{d_t}\sqrt{2}(\phi(d_t)-1)& 0\\
\sqrt{\alpha^2+n^2} & 0 & 0 & 0 & \cdots & 0 &  (n-1)n\sqrt{2} 
\end{bmatrix},\]
where $\alpha= 2n-1, ~ \beta=n-1$, ~ $\bar{d_i}= d(a^j)$ for some $a^j$ with $o(a^j)= d_i$, and $a_{ij}=
\begin{dcases}
 \sqrt{\bar{d_i}^2+\bar{d_j}^2}, & d_i \mid d_j; \\
 0, & otherwise.
\end{dcases}$.

\item[\rm (ii)] If $n$ is even, then the Sombor spectrum is 

 \[\displaystyle \begin{pmatrix}
-(n-1)\sqrt{2} & -\bar{d}_1\sqrt{2}  & -\bar{d}_2\sqrt{2} & \cdots & -\bar{d}_t\sqrt{2} & -\frac{n}{2}\sqrt{2} \\
 \phi(n)-1 & \phi(d_1)-1 & \phi(d_2)-1 & & \phi(d_t)-1 & n-2  \\
\end{pmatrix},
\]
and the remaining eigenvalues are the roots of the following equitable quotient matrix 
\[
\scalebox{0.9}{$
\begin{bmatrix}
0 & \phi(n)\sqrt{\alpha^2+\beta^2} & \phi(d_1)\sqrt{\alpha^2+\bar{d_1}^2}  & \phi(d_2)\sqrt{\alpha^2+\bar{d_2}^2} & \cdots & \phi(t) \sqrt{\alpha^2+\bar{d_t}^2} & \frac{n}{2}\sqrt{\alpha^2+(\frac{n}{2})^2} & \frac{n}{2}\sqrt{\alpha^2+(\frac{n}{2})^2} \\
\sqrt{\alpha^2+\beta^2} & (\phi(n)-1)\beta\sqrt{2} & \phi(d_1)\sqrt{\beta^2+\bar{d_1}^2} & \phi(d_2)\sqrt{\beta^2+\bar{d_2}^2} & \cdots & \phi(d_t)\sqrt{\beta^2+\bar{d_t}^2} & 0 & 0\\
\sqrt{\alpha^2+\bar{d_1}^2} & \phi(n)\sqrt{\beta^2+\bar{d_1}^2} & (\phi(d_1)-1)\bar{d_1}\sqrt{2} & \phi(d_2)a_{12} & \cdots & \phi(d_t)a_{1t} & 0 & 0 \\
\sqrt{\alpha^2+\bar{d_2}^2} & \phi(n)\sqrt{\beta^2+\bar{d_2}^2} & \phi(d_1)a_{21} &  (\phi(d_2)-1)\bar{d_2}\sqrt{2} & \cdots & \phi(d_t)a_{2t} & 0 & 0 \\
\vdots & \vdots & \vdots & \vdots & \ddots & \vdots & \vdots & \vdots\\
\sqrt{\alpha^2+\overline{d_t}^2} & \phi(n)\sqrt{\beta^2+\bar{d_t}^2} &\phi(d_1)a _{t1} & \phi(d_2)a_{t2} & \cdots & \bar{d_t}\sqrt{2}(\phi(d_t)-1) & 0 & 0 \\
\sqrt{\alpha^2+(\frac{n}{2})^2} & 0 & 0 & 0 & \cdots & 0 & \frac{n}{2}(\frac{n}{2}-1)\sqrt{2} & 0 \\
\sqrt{\alpha^2+(\frac{n}{2})^2} & 0 & 0 & 0  & \cdots & 0 & 0 & \frac{n}{2}(\frac{n}{2}-1)\sqrt{2} 
\end{bmatrix}
$}
\]
where $\alpha = 2n-1,~ \beta = n-1,~  \bar{d_i}= d(a^j)$ for some $a^j$ with $o(a^j)= d_i$, and $a_{ij}=
\begin{dcases}
 \sqrt{\bar{d_i}^2+\bar{d_j}^2}, & d_i \mid d_j; \\
 0, & otherwise.
\end{dcases}$
\end{enumerate}
\end{theorem}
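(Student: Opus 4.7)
The plan is to identify $\mathcal{P}^c(D_{2n})$ as a generalised join of cliques and then invoke Theorem~\ref{Characteristic polynomial} together with Lemma~\ref{multiplicity}. The common preliminary observation is that in $\mathcal{P}(D_{2n})$ a reflection $a^kb$ is adjacent to a cyclic element $a^j$ only when $j=0$, since $\langle a^kb\rangle=\{e,a^kb\}$ and $a^kb\notin\langle a^j\rangle$ for any $j$; moreover, if $a^i$ and $a^{n-i}$ belong to a common conjugacy class, then they already share the same neighbourhood in the power graph, so the conjugacy step adds no new edges inside $\langle a\rangle$. These two facts together reduce the conjugacy super power graph to an explicit generalised join on a compressed graph.

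For part (i), when $n$ is odd, the conjugacy classes of $D_{2n}$ are $\{e\}$, the pairs $\{a^i,a^{n-i}\}$ for $1\le i\le (n-1)/2$, and the single reflection class $\{b,ab,\ldots,a^{n-1}b\}$. The reflections therefore form one clique $K_n$ joined only to $e$, and the induced subgraph on $\langle a\rangle$ equals $\mathcal{P}(\mathbb{Z}_n) = K_{\phi(n)+1}\vee\Gamma_n[K_{\phi(d_1)},\ldots,K_{\phi(d_t)}]$. This is exactly the structure of $\mathcal{P}^o(D_{2n})$ for odd $n$, so the Sombor spectrum in (i) follows directly from Theorem~\ref{orderSP D_2n}(i).

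For part (ii), when $n$ is even, the reflection classes split into $R_0=\{a^{2k}b\}$ and $R_1=\{a^{2k+1}b\}$, each of size $n/2$, while $a^{n/2}$ forms its own singleton conjugacy class and is absorbed into the divisor block $K_{\phi(2)}=K_1$ of $\Gamma_n$. By the preliminary observation, each $R_i$ is a clique $K_{n/2}$ joined only to $e$ (in particular not to $a^{n/2}$), so that
\[
\mathcal{P}^c(D_{2n})\cong K_1\vee\bigl(K_{\phi(n)}\vee\Gamma_n[K_{\phi(d_1)},\ldots,K_{\phi(d_t)}]\cup K_{n/2}\cup K_{n/2}\bigr).
\]
The degrees read off from this structure are $d(e)=2n-1$, $d(\text{generator})=n-1$, $d(a^i)=\bar{d_j}$ when $o(a^i)=d_j$, and $d(a^pb)=n/2$ for every reflection. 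Partitioning $V(\mathcal{P}^c(D_{2n}))$ into the blocks $\{e\}$, $K_{\phi(n)}$, each $K_{\phi(d_j)}$, and the two $K_{n/2}$'s, every block is a clique whose vertices share a common external neighbourhood, and Lemma~\ref{multiplicity} produces the eigenvalues $-(n-1)\sqrt{2}$ with multiplicity $\phi(n)-1$, each $-\bar{d_j}\sqrt{2}$ with multiplicity $\phi(d_j)-1$, and $-\tfrac{n}{2}\sqrt{2}$ with multiplicity $2(n/2-1)=n-2$.

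The remaining $t+4$ eigenvalues come from the equitable quotient matrix produced by Theorem~\ref{Characteristic polynomial}: the $(I,J)$ off-diagonal entry is $n_J\sqrt{d_I^2+d_J^2}$ when blocks $I$ and $J$ are adjacent in the compressed graph and zero otherwise, while the $(I,I)$ diagonal entry is $(n_I-1)d_I\sqrt{2}$. Plugging in the adjacencies — $\{e\}$ joined to every block, generators joined to every divisor block, the divisor blocks inter-connected via divisibility in $\Gamma_n$, and each $K_{n/2}$ joined only to $\{e\}$ — reproduces the $(t+4)\times(t+4)$ matrix displayed in the statement. The main obstacle is the careful bookkeeping at $a^{n/2}$: since it has order $2$ but sits inside the cyclic part with a singleton conjugacy class, I must verify that it does not merge with either reflection block and that the two reflection blocks are mutually non-adjacent, so that the quotient matrix has exactly the claimed sparsity pattern.
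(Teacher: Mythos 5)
Your proposal is correct and follows essentially the same route as the paper: identify the conjugacy classes, show that merging $\{a^i,a^{-i}\}$ adds no edges inside $\langle a\rangle$ so that $\mathcal{P}^c(D_{2n})$ coincides with $\mathcal{P}^o(D_{2n})$ for odd $n$, and for even $n$ obtain the generalised-join decomposition $K_1\vee\bigl(K_{\phi(n)}\vee\Gamma_n[K_{\phi(d_1)},\ldots,K_{\phi(d_t)}]\cup K_{n/2}\cup K_{n/2}\bigr)$ before applying Lemma~\ref{multiplicity} and the quotient-matrix argument of Theorem~\ref{power D_2n}. The only cosmetic difference is that in part (i) you argue directly that conjugate powers of $a$ generate the same cyclic subgroup, where the paper routes the same fact through the order relation and Theorem~\ref{P(G)=P^o(G)}; your explicit check that $a^{n/2}$ stays in its divisor block and is not joined to the reflection cliques is a point the paper leaves implicit.
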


\begin{proof}\noindent(i) For odd $n$, we claim $\mathcal{P}^c(D_{2n}) = \mathcal{P}^o(D_{2n}).$ As we know that the conjugacy classes of the dihedral group $D_{2n}$ are
\[ [e]_c = \{ e \}; ~ [a^i]_c = \{ a^i, a^{n-i}\},~{\rm for~all~} i,~{\rm where}~1 \leq i \leq \frac{n-1}{2};~ [b]_c = \{ a^ib : 1 \leq i \leq n \}.\]
Therefore, the subgraph induced by the vertices belongs to the conjugacy class $[b]_c$ is complete. Also, we have been already prove in Theorem \ref{orderSP D_2n} (i) that $a^ib \nsim a^j$ in $\mathcal{P}^o(D_{2n})$ for all $1 \leq i \leq n$ and $1 \leq j \leq n - 1$. It implies that $a^ib \nsim a^j$ in $\mathcal{P}^c(D_{2n})$ because the conjugacy class of $a^ib$ and $a^j$ are distinct. As a consequence, $N[a^ib]$ in both the graphs $\mathcal{P}^c(D_{2n})$ and $\mathcal{P}^o(D_{2n})$ are equal. Also, $N[e] = D_{2n}$ in both the graphs. Now we show that $x \sim y$ in $\mathcal{P}^o(D_{2n})$ if and only if $x \sim y$ in $\mathcal{P}^c(D_{2n})$ for all $x, y \in \langle a \rangle$. In view of Theorem \ref{P(G)=P^o(G)}, it is sufficient to prove that $x \sim y$ in $\mathcal{P}(D_{2n})$ if and only if $x \sim y$ in $\mathcal{P}^c(D_{2n})$ for all $x, y \in \langle a \rangle$. As we know that $\mathcal{P}(D_{2n})$ is a spanning subgraph of $\mathcal{P}^c(D_{2n})$. On the other side, let $x,y \in \langle a \rangle$ such that $x \sim y$ in $\mathcal{P}^c(D_{2n})$. Then either $x \sim y$ in $\mathcal{P}( D_{2n})$ or $x, y$ belongs to the same conjugacy class. For $x \sim y$ in $\mathcal{P}( D_{2n})$, nothing to prove it. In case of $x, y$ belongs to the same conjugacy class, we have $o(x) = o(y)$. Consequently, we get $x \sim y$ in $\mathcal{P}^o( D_{2n})$ and so $x \sim y$ in $\mathcal{P}( D_{2n})$ (see Theorem \ref{P(G)=P^o(G)}). Thus, we have $\mathcal{P}^c(D_{2n}) = \mathcal{P}^o(D_{2n})$ and  Sombor spectrum of $\mathcal{P}^o(D_{2n})$  has already been discussed in Theorem \ref{orderSP D_2n}.

\noindent(ii) For even $n$, the conjugacy classes of the dihedral group $D_{2n}$ are
$[e]_c = \{ e \};$  $[a^{\frac{n}{2}}]_c = \{ a^{\frac{n}{2}} \};  ~ [a^i]_c = \{ a^i, a^{n-i}\},~{\rm for~all~}\\ i,~{\rm where}~1 \leq i \leq \frac{n-2}{2};~[b]_c = \{ a^{2i}b : 1 \leq i \leq \frac{n}{2}\}$ and $[ab]_c = \{ a^{2i - 1}b : 1 \leq i \leq \frac{n}{2}\}$. The induced subgraphs of the graph $\mathcal{P}^c(D_{2n})$ with vertex sets $[b]_c$, and $[ab]_c$ are isomorphic to $K_{\frac{n}{2}}$ and $K_{\frac{n}{2}}$, respectively. Clearly, no vertex of the conjugacy class $[b]_c$ is adjacent to any element of the class $[ab]_c$. By using the same argument used in part (i), we get  
\[ 
\mathcal{P}^o(D_{2n}) = K_1 \vee \left[ K_{\phi(n)} \vee \Gamma_{n}[K_{\phi(d_1)},K_{\phi(d_2)},\ldots,K_{\phi(d_t)}] \cup {K}_{\frac{n}{2}} \cup {K}_{\frac{n}{2}}\right].
\]
where $\Gamma_n$ is a graph with vertex $V(\Gamma_n) = \{ d_i : 1, n \neq d_i | n,~ 1 \leq i \leq p \}$ and two distinct  vertices $d_i$ and $d_j$ are adjacent in $\Gamma_n$ if one of them divides other. By using the same argument used in Theorem \ref{power D_2n}, we have the result. 
\end{proof}

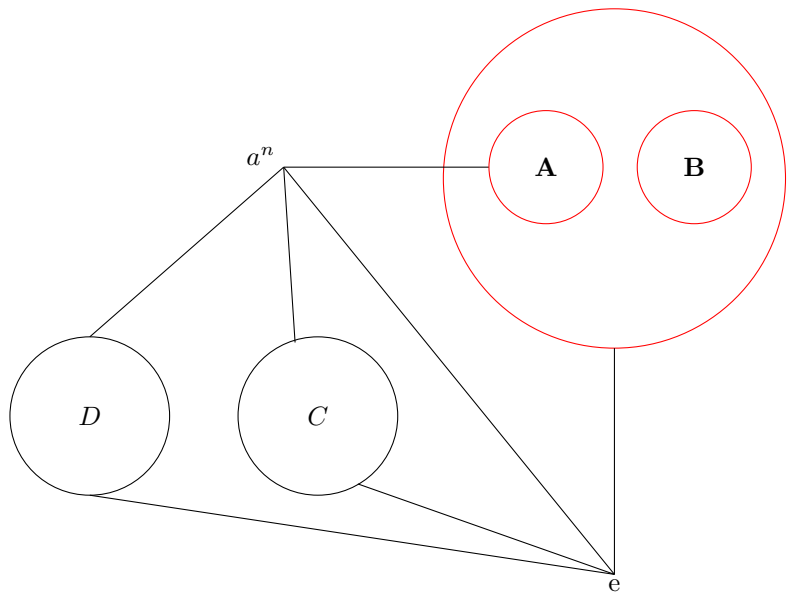
\begin{figure}[h!]
\begin{center}
\begin{tikzpicture}[scale=1.5]
\draw[color=red] (0,2) circle [radius=1.5];
\draw (0,-1.6) node{e};

\draw (-0.6,2.1) node{\textbf{A}};
\draw (0.7,2.1) node{\textbf{B}};
\draw (-1.1,2.1)--(-2.9,2.1);
\draw (-3.1,2.2) node{$a^n$};
\draw (-2.9,2.1)--(0,-1.5);
\draw (0,-1.5)--(0,0.5);



\draw[color=red] (-0.6, 2.1) circle [radius=0.5];
\draw[color=black] (-2.6, -0.1) circle [radius=0.7];
\draw (-2.9,2.1)--(-2.8,0.55);
\draw (-2.25, -0.7)--(0,-1.5);
\draw (-2.6, -0.1) node{$C$};

\draw[color=black] (-4.6, -0.1) circle [radius=0.7];
\draw (-4.6, -0.8)--(0,-1.5);
\draw (-4.6, 0.6)--(-2.9,2.1);
\draw (-4.6, -0.1) node{$D$};


%
\draw[color=red] (0.7,2.1) circle [radius=0.5];
\end{tikzpicture}
\caption{Power graph $\mathcal{P}^c(Q_{4n})$, when $n$ is even}\label{Fig-7}
\end{center}
\end{figure}

\begin{theorem}\label{power-conj-Q_4n}
 Let $G$ be a generalized quaternion group $Q_{4n}$.  Then we have the following.
 \begin{enumerate}
\item[\rm (i)] For even $n$, $-\sqrt{2}(n+1)$  is a Sombor eigenvalue of multiplicity of at least $2n-2$ of the power graph $\mathcal{P}^c(Q_{4n})$.

\item[\rm (ii)] For odd $n$, $-\sqrt{2}(2n+1)$ is a Sombor eigenvalue of multiplicity of at least $2n-1$ of the power graph $\mathcal{P}^c(Q_{4n})$.
 \end{enumerate}
 
\end{theorem}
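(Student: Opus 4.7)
The plan is to describe the structure of $\mathcal{P}^c(Q_{4n})$ restricted to the non-cyclic elements $\{a^ib : 0 \leq i \leq 2n-1\}$, and then apply Lemma \ref{multiplicity}(ii) to an appropriately chosen clique with a common external neighborhood. First I would recall from the proof of Corollary \ref{conjugacySE-Q_4n} that the conjugacy classes of $Q_{4n}$ split the non-cyclic part into $[a^2b]_c = \{a^{2i}b : 0 \leq i < n\}$ and $[ab]_c = \{a^{2i+1}b : 0 \leq i < n\}$, and that by the definition of the super-graph the subgraph induced by either class is a complete graph. From the structure of $\mathcal{P}(Q_{4n})$ recalled in Theorem \ref{power Q_2n} we have $\langle a^ib\rangle = \{e, a^n, a^ib, a^{n+i}b\}$, so the power-graph neighbours of $a^ib$ among non-cyclic elements are exactly $a^{n+i}b$; among cyclic elements they are exactly $e$ and $a^n$.

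The key step is the parity analysis: an edge between $[a^2b]_c$ and $[ab]_c$ exists in $\mathcal{P}^c(Q_{4n})$ if and only if some $a^{2i}b$ is adjacent in $\mathcal{P}(Q_{4n})$ to some $a^{2j+1}b$, i.e.\ $2j+1 \equiv n+2i \pmod{2n}$. For even $n$ this congruence has no solution, so no such edge exists; for odd $n$ it always has a solution, which forces every vertex of $[a^2b]_c$ to be adjacent to every vertex of $[ab]_c$ in the super-graph $\mathcal{P}^c(Q_{4n})$. Note also that for any $i$ we have $a^ib \sim e$ and $a^ib \sim a^n$ in $\mathcal{P}(Q_{4n})$, hence the same holds in $\mathcal{P}^c(Q_{4n})$; and no $a^ib$ is adjacent in $\mathcal{P}^c(Q_{4n})$ to any $a^k$ with $k \notin \{0, n\}$, because the conjugacy classes of $a^ib$ and $a^k$ are disjoint and there is no connecting edge in $\mathcal{P}(Q_{4n})$.

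For part (i), when $n$ is even, I would set $S_1 = [a^2b]_c$ and $S_2 = [ab]_c$. Each $S_t$ is a clique of size $n$ in $\mathcal{P}^c(Q_{4n})$; every vertex $x \in S_t$ has $N[x] \setminus S_t = \{e, a^n\}$, independent of the chosen $x$, and degree $\deg(x) = (n-1) + 2 = n+1$. Lemma \ref{multiplicity}(ii) applied to $S_1$ and $S_2$ separately gives $-(n+1)\sqrt{2}$ as a Sombor eigenvalue with multiplicity at least $(n-1) + (n-1) = 2n-2$. For part (ii), when $n$ is odd, the parity argument shows $S = [a^2b]_c \cup [ab]_c$ forms a single clique of size $2n$ in $\mathcal{P}^c(Q_{4n})$; every $x \in S$ has $N[x] \setminus S = \{e, a^n\}$ and $\deg(x) = (2n-1) + 2 = 2n+1$, so Lemma \ref{multiplicity}(ii) yields $-(2n+1)\sqrt{2}$ as a Sombor eigenvalue of multiplicity at least $2n-1$.

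The only delicate point is verifying the parity dichotomy that governs whether the two reflection-type conjugacy classes collapse into a single clique or remain separate; once this is established the conclusion is a direct application of Lemma \ref{multiplicity}(ii), with no need to compute the remaining spectrum explicitly.
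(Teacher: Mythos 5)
Your proposal is correct and follows essentially the same route as the paper: identify the two reflection conjugacy classes, observe that for even $n$ they form two disjoint cliques of size $n$ each with common external neighbourhood $\{e,a^n\}$ (degree $n+1$), while for odd $n$ the cross edge $a^{2i}b \sim a^{n+2i}b$ merges them into one clique of size $2n$ (degree $2n+1$), and then apply Lemma \ref{multiplicity}(ii). Your explicit congruence $2j+1 \equiv n+2i \pmod{2n}$ is a slightly more careful justification of the parity dichotomy than the paper's single witnessing edge, but the argument is the same.
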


\begin{proof} (i) In view of Equation \ref{eq(2)}, we consider the sets $A = \{ a^i : 1 \leq i \leq 2n -1,~ i \neq n ~ \text{and}~o(a^i)~ \text{is~ even}\}$, $B = \{ a^i : 1 \leq i \leq 2n -1,~ i \neq n ~ \text{and}~ o(a^i)~ \text{is~ odd}\}$, $C = \{a^{2i}b : ~1 \leq i \leq n \}$ and $D = \{a^{2i + 1}b : ~1 \leq i \leq n \}$. Also, we have discussed the conjugacy classes of the generalized quaternion group group $Q_{4n}$ in the proof of Corollary \ref{conjugacySE-Q_4n} which are
\begin{itemize}
\item $[e]_c = \{e\}; [a^n]_c = \{a^n\}$;
\item for $0 < i < 2n$ and $i \neq n$, we have  $[a^i]_c = \{a^{i}, a^{-i}\}$;
\item $[a^2b]_c = \{a^{2i}b : 0 \leq i < n\}$;
\item $[ab]_c = \{a^{2i + 1}b : 0 \leq i < n\}$.
\end{itemize}
Therefore, the graph $\mathcal{P}^c(Q_{4n})$ is given in Figure \ref{Fig-7}.
Let $S_1 = C$ and $S_2 = D$ be subsets of a vertex set $Q_{4n}$. In view of Figure \ref{Fig-7}, the vertices belonging to the set $S_1$ and $S_2$ form a clique of size $n$, respectively. Also $N[x] \setminus S_1 = \{e, a^n\}$ for all $x \in S_1$ and $N[x] \setminus S_2 = \{e, a^n\}$ for all $x \in S_2$.  By Theorem \ref{multiplicity}, we have $-\sqrt{2} (n+1)$ is an eigenvalue of multiplicity of at least $2n-2$.

\noindent (ii) For odd $n$, we get $n + 2$ is odd. By Equation \ref{eq(2)}, we have $a^2b \sim a^{n+2}b$ in  $\mathcal{P}^c(Q_{4n})$ so that the subgraph induced by the vertices belongs to the union of the sets $C$ and $D$ defined in part (i) is complete. Furthermore, note that the remaining edges will be the same in both cases (even $n$ or odd $n$). Thus, we have the graph $\mathcal{P}^c(Q_{4n})$ is shown in Figure \ref{Fig-8}. By using the similar argument used in part (i), we get the result.
\begin{figure}[h!]
\begin{center}
\begin{tikzpicture}[scale=1.5]
\draw[color=red] (0,2) circle [radius=1.5];
\draw (0,-1.6) node{e};

\draw (-0.6,2.1) node{\textbf{A}};
\draw (0.7,2.1) node{\textbf{B}};
\draw (-1.1,2.1)--(-2.9,2.1);
\draw (-3.1,2.2) node{$a^n$};
\draw (-2.9,2.1)--(0,-1.5);
\draw (0,-1.5)--(0,0.5);



\draw[color=red] (-0.6, 2.1) circle [radius=0.5];

\draw[color=black] (-4.6, -0.1) circle [radius=0.7];
\draw (-4.6, -0.8)--(0,-1.5);
\draw (-4.6, 0.6)--(-2.9,2.1);
\draw (-4.6, -0.1) node{$C \cup D$};


%
\draw[color=red] (0.7,2.1) circle [radius=0.5];
\end{tikzpicture}
\caption{Power graph $\mathcal{P}^c(Q_{4n})$, whenever $n$ is odd}\label{Fig-8}
\end{center}
\end{figure}
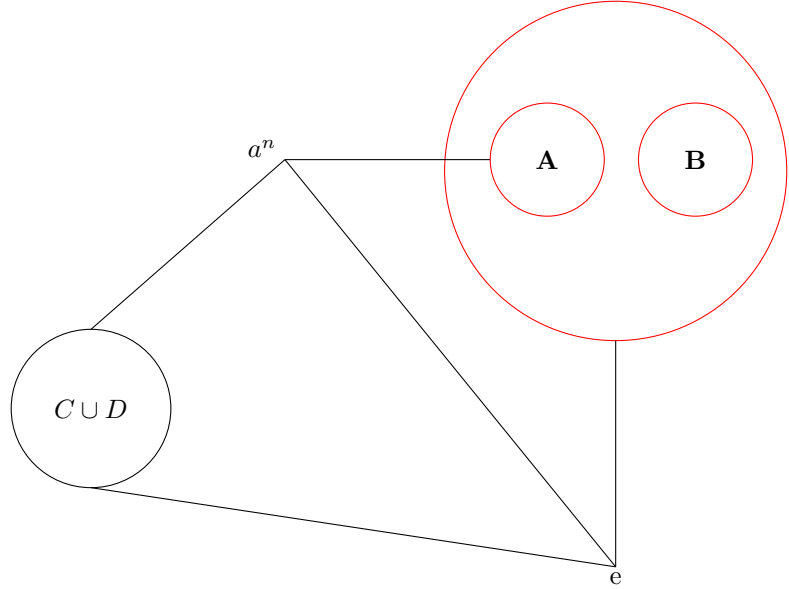
\end{proof}

\begin{theorem}\label{power SD_8n}
 Let $G$ be a semidihedral  group $SD_{8n}$.Then we have the following statements.
\begin{enumerate}
\item[\rm (i)]For odd $n$, $-(2n+1)\sqrt{2}$ and  $-n\sqrt{2}$  are Sombor eigenvalues of power graph $\mathcal{P}^c(SD_{8n})$ and both have multiplicity $2n-1$ and $2n-2$, respectively. 

\item[\rm (ii)] For even $n$, $-(2n)\sqrt{2}$ and  $-(2n+1)\sqrt{2}$ are Sombor eigenvalues of the power graph $\mathcal{P}^c(SD_{8n})$ with multiplicity $2n-1$ each.
\end{enumerate}  
\end{theorem}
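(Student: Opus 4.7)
The plan is to identify explicit cliques in $\mathcal{P}^c(SD_{8n})$ on which every vertex shares the same external neighbourhood, and then read off the claimed Sombor eigenvalues from Lemma~\ref{multiplicity}(ii). The conjugacy classes of $SD_{8n}$ that I need are already listed in the proof of Theorem~\ref{conjugacySE-SD_8n}, and the $\mathcal{P}$-neighbourhoods $N[a^{2i+1}b]=\{e,a^{2n},a^{2i+1}b,a^{2n+2i+1}b\}$ and $N[a^{2i}b]=\{e,a^{2i}b\}$ follow from $(a^{2i+1}b)^2=a^{2n}$ and $(a^{2i}b)^2=e$.

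For odd $n$, my first step is to show that $T=[ab]_c\cup[a^3b]_c$, of size $2n$, is a clique in $\mathcal{P}^c(SD_{8n})$. The two halves are cliques by the super-graph axiom (elements in the same equivalence class are adjacent). They are cross-joined because, with $n$ odd, $2n+4k+1\equiv 3\pmod 4$ lies among the exponents of $[a^3b]_c$, so $a^{4k+1}b\sim a^{2n+4k+1}b$ already in $\mathcal{P}$; by the super-graph condition this forces every element of $[ab]_c$ to be adjacent to every element of $[a^3b]_c$. Next I would check that the external neighbourhood of $T$ is exactly $\{e,a^{2n}\}$: no other element of $\langle a\rangle$ is $\mathcal{P}$-adjacent to anything in $T$, and every non-central element of $\langle a\rangle$ sits in a conjugacy class contained in $\langle a\rangle$, so cannot acquire a super-edge into $T$; the involutions $a^{4k}b$ and $a^{4k+2}b$ have only $e$ as their non-trivial $\mathcal{P}$-neighbour. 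Each vertex of $T$ therefore has degree $2n-1+2=2n+1$, and Lemma~\ref{multiplicity}(ii) yields $-(2n+1)\sqrt 2$ as a Sombor eigenvalue of multiplicity at least $2n-1$. For $-n\sqrt 2$, I would apply Lemma~\ref{multiplicity}(ii) separately to the cliques $[b]_c$ and $[a^2b]_c$: each is a clique of size $n$ with common external neighbourhood $\{e\}$, so each vertex has degree $n$, and each class contributes multiplicity at least $n-1$. Because the two eigenspaces are supported on disjoint vertex sets, they combine to give $-n\sqrt 2$ with multiplicity at least $2n-2$.

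The even-$n$ case is structurally simpler. Now $[ab]_c=\{a^{2k+1}b:0\le k\le 2n-1\}$ is already a single clique of size $2n$, with external neighbourhood $\{e,a^{2n}\}$ by the same reasoning as above; every vertex has degree $2n+1$, yielding $-(2n+1)\sqrt 2$ with multiplicity at least $2n-1$. Similarly $[b]_c=\{a^{2k}b:0\le k\le 2n-1\}$ is a clique of size $2n$ with external neighbourhood $\{e\}$ and degree $2n$, contributing $-2n\sqrt 2$ with multiplicity at least $2n-1$.

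The main obstacle is the careful bookkeeping in the odd-$n$ case to confirm that no super-edge connects $T$ to $\langle a\rangle\setminus\{e,a^{2n}\}$ or to the involution classes, and dually that the involution cliques $[b]_c$ and $[a^2b]_c$ do not acquire a super-edge between themselves. The key observation that makes this bookkeeping routine is that every non-central power of $a$ is conjugate only to another element of $\langle a\rangle$, so super-edges from $\langle a\rangle$ into the $b$-classes can only arise from $\mathcal{P}$-edges already present, which terminate at $e$ and $a^{2n}$.
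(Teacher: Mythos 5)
Your proposal is correct and follows essentially the same route as the paper: identify the cliques $[ab]_c\cup[a^3b]_c$, $[b]_c$, $[a^2b]_c$ (for odd $n$) and $[ab]_c$, $[b]_c$ (for even $n$), verify their common external neighbourhoods are $\{e,a^{2n}\}$ or $\{e\}$, compute the degrees $2n+1$, $n$, $2n$, and invoke Lemma~\ref{multiplicity}(ii). Your write-up is in fact slightly more explicit than the paper's (the mod-$4$ argument for why $[ab]_c$ and $[a^3b]_c$ merge into one clique when $n$ is odd, and the bookkeeping ruling out extra super-edges, which the paper delegates to Figures~\ref{Fig-9} and~\ref{Fig-10}).
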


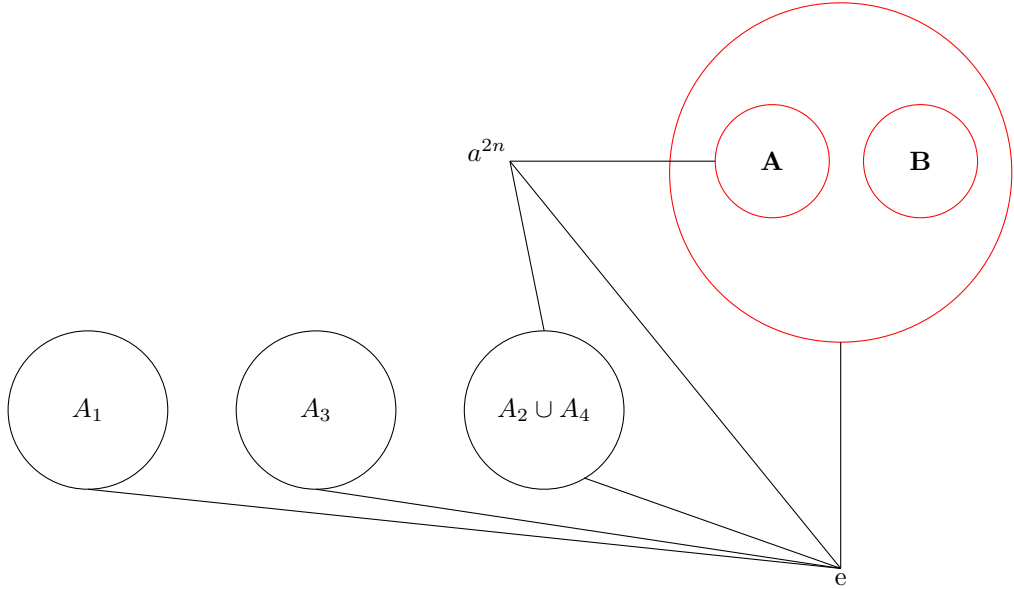
\begin{figure}[h!]
\begin{center}
\begin{tikzpicture}[scale=1.5]
\draw[color=red] (0,2) circle [radius=1.5];
\draw (0,-1.6) node{e};

\draw (-0.6,2.1) node{\textbf{A}};
\draw (0.7,2.1) node{\textbf{B}};
\draw (-1.1,2.1)--(-2.9,2.1);
\draw (-3.1,2.2) node{$a^{2n}$};
\draw (-2.9,2.1)--(0,-1.5);
\draw (0,-1.5)--(0,0.5);




\draw[color=red] (-0.6, 2.1) circle [radius=0.5];
\draw[color=black] (-2.6, -0.1) circle [radius=0.7];
\draw (-2.25, -0.7)--(0,-1.5);
\draw (-2.6, -0.1) node{$A_2 \cup A_4$};
\draw (-2.9,2.1)--(-2.6, 0.6);
\draw[color=black] (-4.6, -0.1) circle [radius=0.7];
\draw (-4.6, -0.8)--(0,-1.5);


\draw (-4.6, -0.1) node{$A_3$};

\draw[color=red] (0.7,2.1) circle [radius=0.5];
\draw (-6.6, -0.1) node{$A_1$};

\draw[color=black] (-6.6, -0.1) circle [radius=0.7];
\draw (0,-1.5)--(-6.6, -0.8);
\end{tikzpicture}
\caption{Power graph $\mathcal{P}^c(SD_{8n})$, when $n$ is odd}\label{Fig-9}
\end{center}
\end{figure}

\begin{proof} (i) If $n$ is odd, then in order to obtain the structure of conjugacy super power graph $\mathcal{P}^c(SD_{8n})$, it is known that the conjugacy classes of semidihedral group $SD_{8n}$ are
\begin{itemize}
\item $[e]_c = \{e\}$;
\item $[a^{n}]_c = \{a^{n}\}$;
\item $[a^{2n}]_c = \{a^{2n}\}$;
\item $[a^{3n}]_c = \{a^{3n}\}$;
\item for $1 \leq i \leq 2n + 1$ and $i \neq \{n, 2n-1, 2n\}$, we have $[a^i]_c = \begin{dcases}
\{a^i, a^{4n -i}\},& \text{ if $i$ is even};\\
\{a^i, a^{2n -i}\},& \text{if $i$ is odd};
\end{dcases}$

\item $[b]_c = \{a^{4k}b:~ 0 \leq k \leq n-1 \}$;

\item $[ab]_c = \{a^{4k + 1}b:~ 0 \leq k \leq n-1 \}$;

\item $[a^2b]_c = \{a^{4k + 2}b:~ 0 \leq k \leq n-1 \}$;

\item $[a^3b]_c = \{a^{4k + 3}b:~ 0 \leq k \leq n-1 \}$.
\end{itemize}

Now we consider the sets $A = \{ a^i : 1 \leq i \leq 4n -1,~ i \neq n ~ \text{and}~ o(a^i)~ \text{is~ even}\}$, $B = \{ a^i : 1 \leq i \leq 4n -1,~ i \neq n ~ \text{and}~ o(a^i)~ \text{is~ odd}\}$, $A_1 = [b]_c = \{a^{4k}b:~ 0 \leq k \leq n-1 \}, A_2 = [ab]_c = \{a^{4k + 1}b:~ 0 \leq k \leq n-1 \}, A_3 = [a^2b]_c = \{a^{4k + 2}b:~ 0 \leq k \leq n-1 \}$ and $A_4 = [a^3b]_c = \{a^{4k + 3}b:~ 0 \leq k \leq n-1 \}$. Now, $ab \in A_2$ and $ab$ is adjacent to $a^{2n+1}b \in A_4$ in $\mathcal{P}(SD_{8n})$. In view of power graph $\mathcal{P}(SD_{8n})$ given in Figure \ref{Fig-2} and the conjugacy classes of semidihedral group $SD_{8n}$, the graph $\mathcal{P}^c(SD_{8n})$ is shown in Figure \ref{Fig-9}.

Let $S_1 = A_1$, $S_2 = A_3$, and $S_3 = A_2 \cup A_4$ be subsets of a vertex set $SD_{8n}$. Clearly, the sets $S_1$, $S_2$, and $S_3$ form cliques of size $n$, $n$, and $2n$, respectively. Moreover, $N[x] \setminus S_i = \{e\}$ for $x \in S_i$ for $1 \leq i \leq 2$. Also,  $N[x] \setminus S_3 = \{e, a^{2n}\}$.  By Lemma \ref{multiplicity}, we have $-(2n + 1)\sqrt{2}$ and $-n\sqrt{2}$ are eigenvalues of multiplicity of at least $2n-1$ and $2n-2$, respectively.\\
(ii) For even $n$, in order to obtain the structure of conjugacy super power graph $\mathcal{P}^c(SD_{8n})$, it is known that the conjugacy classes of semidihedral group $SD_{8n}$ are
\begin{itemize}
\item $[e]_c = \{e\}$;
\item $[a^{2n}]_c = \{a^{2n}\}$;
\item for $1 \leq i \leq 2n + 1$ and $i \neq \{n, 2n-1, 2n\}$, we have $[a^i]_c = \begin{dcases}
\{a^i, a^{4n -i}\},& \text{ if $i$ is even};\\
\{a^i, a^{2n -i}\},& \text{if $i$ is odd};
\end{dcases}$

\item $[b]_c = \{a^{2k}b:~ 0 \leq k \leq 2n-1 \}$;

\item $[ab]_c = \{a^{2k + 1}b:~ 0 \leq k \leq 2n-1 \}$.
\end{itemize}
Now consider the sets $A = \{ a^i : 1 \leq i \leq 4n -1,~ i \neq n ~ \text{and}~ o(a^i)~ \text{is~ even}\}$, $B = \{ a^i : 1 \leq i \leq 4n -1,~ i \neq n ~ \text{and}~ o(a^i)~ \text{is~ odd}\}$, $A_1 = [b]_c = \{a^{2k}b:~ 0 \leq k \leq 2n-1 \}, A_2 = [ab]_c = \{a^{2k + 1}b:~ 0 \leq k \leq 2n-1 \}$.
For $0 \leq i,j \leq 2n-1$, we observe that $a^{2i}b \nsim a^{2j+1}b$ in $\mathcal{P}(SD_{8n})$. In addition, we notice that the remaining edges will be the same in both cases (even $n$ or odd $n$). Thus, we have the graph $\mathcal{P}^c(SD_{8n})$ is shown in Figure \ref{Fig-10}. By using a similar argument used in part (i), we get the result.

\begin{figure}[h!]
\begin{center}
\begin{tikzpicture}[scale=1.5]
\draw[color=red] (0,2) circle [radius=1.5];
\draw (0,-1.6) node{e};

\draw (-0.6,2.1) node{\textbf{A}};
\draw (0.7,2.1) node{\textbf{B}};
\draw (-1.1,2.1)--(-2.9,2.1);
\draw (-3.1,2.2) node{$a^{2n}$};
\draw (-2.9,2.1)--(0,-1.5);
\draw (0,-1.5)--(0,0.5);




\draw[color=red] (-0.6, 2.1) circle [radius=0.5];
\draw[color=black] (-2.6, -0.1) circle [radius=0.7];
\draw (-2.25, -0.7)--(0,-1.5);
\draw (-2.6, -0.1) node{$A_1$};

\draw[color=black] (-4.6, -0.1) circle [radius=0.7];
\draw (-4.6, -0.8)--(0,-1.5);
\draw (-4.6, 0.6)--(-2.9,2.1);



\draw (-4.6, -0.1) node{$A_2$};

\draw[color=red] (0.7,2.1) circle [radius=0.5];
\end{tikzpicture}
\caption{Power graph $\mathcal{P}^c(SD_{8n})$, when $n$ is even}\label{Fig-10}
\end{center}
\end{figure}
\end{proof}

\textbf{Acknowledgement:} The first author gratefully acknowledges for providing financial support to CSIR\\
(09/0719(17365)/2024-EMR-I), Government of India. The third author wishes to acknowledge the support of Grant (CRG/2022/001142) funded by ANRF(SERB), Government of India.

\noindent \textbf{Conflict of Interest:} On behalf of all authors, the corresponding author declares that there is no conflict of interest.


\begin{thebibliography}{10}

\bibitem{aalipour2017enhanced}
G.~Aalipour, S.~Akbari, P.~J. Cameron, R.~Nikandish, and F.~Shaveisi.
\newblock On the structure of the power graph and the enhanced power graph of a group.
\newblock {\em Electron. J. Combin.}, 24(3):Paper No. 3.16, 18, 2017.

\bibitem{abdollahi2009noncyclic}
A.~Abdollahi and A.~M. Hassanabadi.
\newblock Non-cyclic graph associated with a group.
\newblock {\em J. Algebra Appl.}, 8(02):243--257, 2009.

\bibitem{ali2016commuting}
F.~Ali, M.~Salman, and S.~Huang.
\newblock On the commuting graph of dihedral group.
\newblock {\em Comm. Algebra}, 44(6):2389--2401, 2016.

\bibitem{anwar2024Sombor}
M.~Anwar, M.~R. Mozumder, M.~Rashid, and M.~A. Raza.
\newblock Sombor index and {S}ombor spectrum of cozero-divisor graph of {$\Bbb Z_n$}.
\newblock {\em Results Math.}, 79(4):Paper No. 146, 13, 2024.

\bibitem{a.Araujo2015symmetricinverse}
J.~Ara{\'{u}}jo, W.~Bentz, and J.~Konieczny.
\newblock The commuting graph of the symmetric inverse semigroup.
\newblock {\em Isr. J. Math.}, 207(1):103--149, 2015.

\bibitem{arunkumar2024main}
G.~Arunkumar, P.~J. Cameron, R.~Ganeshbabu, and R.~K. Nath.
\newblock Main functions and the spectrum of super graphs.
\newblock {\em arXiv:2408.00390}, 2024.

\bibitem{arunkumar2024graphs}
G.~Arunkumar, P.~J. Cameron, and R.~K. Nath.
\newblock Super graphs on groups, {II}.
\newblock {\em Discrete Appl. Math.}, 359:371--382, 2024.

\bibitem{arunkumar2022super}
G.~Arunkumar, P.~J. Cameron, R.~K. Nath, and L.~Selvaganesh.
\newblock Super graphs on groups, {I}.
\newblock {\em Graphs Combin.}, 38(3):100, 2022.

\bibitem{a.ashrafi2017automorphism}
A.~R. Ashrafi, A.~Gholami, and Z.~Mehranian.
\newblock Automorphism group of certain power graphs of finite groups.
\newblock {\em Electronic J. of Graph Theory and Applications}, 5(1):70--82, 2017.

\bibitem{bera2018enhanced}
S.~Bera and A.~K. Bhuniya.
\newblock On enhanced power graphs of finite groups.
\newblock {\em J. Algebra Appl.}, 17(8):1850146, 8, 2018.

\bibitem{bravcivc2025diameter}
J.~Bra{\v{c}}i{\v{c}} and B.~Kuzma.
\newblock On the diameter of a super-order-commuting graph.
\newblock {\em Discret. Math.}, 348(4):114385, 2025.

\bibitem{a.1955-Brauer-group}
R.~Brauer and K.~A. Fowler.
\newblock On groups of even order.
\newblock {\em Ann. of Math. (2)}, 62:565--583, 1955.

\bibitem{brouwer2011spectra}
A.~E. Brouwer and W.~H. Haemers.
\newblock {\em Spectra of graphs}.
\newblock Springer Science \& Business Media, 2011.

\bibitem{cameron2011power}
P.~J. Cameron and S.~Ghosh.
\newblock The power graph of a finite group.
\newblock {\em Discret. Math.}, 311(13):1220--1222, 2011.

\bibitem{cameron2020connectivity}
P.~J. Cameron and S.~H. Jafari.
\newblock On the connectivity and independence number of power graphs of groups.
\newblock {\em Graphs Combin}, 36(3):895--904, 2020.

\bibitem{chakrabarty2009undirected}
I.~Chakrabarty, S.~Ghosh, and M.~Sen.
\newblock Undirected power graphs of semigroups.
\newblock {\em Semigroup Forum}, 78:410--426, 2009.

\bibitem{chattopadhyay2015laplacian}
S.~Chattopadhyay and P.~Panigrahi.
\newblock On {L}aplacian spectrum of power graphs of finite cyclic and dihedral groups.
\newblock {\em Linear Multilinear Algebra}, 63(7):1345--1355, 2015.

\bibitem{a.chattopadhyayspectralradius2017}
S.~Chattopadhyay, P.~Panigrahi, and F.~Atik.
\newblock Spectral radius of power graphs on certain finite groups.
\newblock {\em Indag. Math. (N.S.)}, 29(2):730--737, 2018.

\bibitem{a.chattopadhyay-laplacian}
S.~Chattopadhyay, K.~L. Patra, and B.~K. Sahoo.
\newblock Laplacian eigenvalues of the zero divisor graph of the ring $\mathbb z_n$.
\newblock {\em Linear Algebra Appl.}, 584:267--286, 2020.

\bibitem{dalal2021enhanced}
S.~Dalal and J.~Kumar.
\newblock On enhanced power graphs of certain groups.
\newblock {\em Discrete Math. Algorithms Appl.}, 13(1):Paper No. 2050099, 18, 2021.

\bibitem{dalal2023connectivity}
S.~Dalal, J.~Kumar, and S.~Singh.
\newblock On the connectivity and equality of some graphs on finite semigroups.
\newblock {\em Bull. Malays. Math. Sci. Soc.}, 46(1):25, 2023.

\bibitem{dalal2024reducedgraph}
S.~Dalal, S.~Mukherjee, and K.~L. Patra.
\newblock On the super graphs and reduced super graphs of some finite groups.
\newblock {\em Discrete Math.}, 347(1):Paper No. 113728, 10, 2024.

\bibitem{spectrumofsupergraphs}
S.~Dalal, S.~Mukherjee, and K.~L. Patra.
\newblock Spectrum of super commuting graphs of some finite groups.
\newblock {\em Comput. Appl. Math.}, 43(6):Paper No. 348, 14, 2024.

\bibitem{das2024super}
S.~Das and R.~K. Nath.
\newblock Super commuting graphs of finite groups and their zagreb indices.
\newblock {\em arXiv:2407.11297}, 2024.

\bibitem{a.Dolvazn2017}
D.~Dol\v{z}an, D.~Kokol~Bukov\v{s}ek, and B.~Kuzma.
\newblock On diameter of components in commuting graphs.
\newblock {\em Linear Algebra Appl.}, 522:161--174, 2017.

\bibitem{gutman2021geometric}
I.~Gutman.
\newblock Geometric approach to degree-based topological indices: Sombor indices.
\newblock {\em MATCH Commun. Math. Comput. Chem}, 86(1):11--16, 2021.

\bibitem{a.2017Hamzaeh_automorphism}
A.~Hamzeh and A.~Ashrafi.
\newblock Automorphism groups of supergraphs of the power graph of a finite group.
\newblock {\em European J. Combin.}, 60:82--88, 2017.

\bibitem{a.kelarev2002undirected}
A.~V. Kelarev.
\newblock On undirected {C}ayley graphs.
\newblock {\em Australas. J. Combin.}, 25:73--78, 2002.

\bibitem{kumar2021recent}
A.~Kumar, L.~Selvaganesh, P.~J. Cameron, and T.~T. Chelvam.
\newblock Recent developments on the power graph of finite groups--a survey.
\newblock {\em AKCE Int. J. Graphs Comb.}, 18(2):65--94, 2021.

\bibitem{dalal2021semidihedral}
J.~Kumar, S.~Dalal, and V.~Baghel.
\newblock On the commuting graph of semidihedral group.
\newblock {\em Bull. Malays. Math. Sci. Soc.}, 44(5):3319--3344, 2021.

\bibitem{mehranian2016note}
Z.~Mehranian, A.~Gholami, and A.~R. Ashrafi.
\newblock A note on the power graph of a finite group.
\newblock {\em Int. J. Group Theory}, 5(1):1--10, 2016.

\bibitem{parveen2024enhanced}
Parveen, S.~Dalal, and J.~Kumar.
\newblock Enhanced power graphs of certain non-abelian groups.
\newblock {\em Discrete Math. Algorithms Appl.}, 16(5):Paper No. 2350063, 19, 2024.

\bibitem{pirzada2025spectrum}
S.~Pirzada, B.~A. Rather, K.~C. Das, Y.~Shang, and I.~Gutman.
\newblock On spectrum of sombor matrix and sombor energy of graphs.
\newblock {\em Georgian Math. J.}, (0), 2025.

\bibitem{rajkumar2021laplacian}
R.~Rajkumar and T.~Anitha.
\newblock Laplacian spectrum of reduced power graph of certain finite groups.
\newblock {\em Linear Multilinear Algebra}, 69(9):1716--1733, 2021.

\bibitem{rather2024comaximal}
B.~A. Rather, M.~Imran, and S.~Pirzada.
\newblock Sombor index and eigenvalues of comaximal graphs of commutative rings.
\newblock {\em J. Algebra Appl.}, 23(6):Paper No. 2450115, 19, 2024.

\bibitem{a.segev2001commuting}
Y.~Segev.
\newblock The commuting graph of minimal nonsolvable groups.
\newblock {\em Geom. Dedicata}, 88(1-3):55--66, 2001.

\bibitem{shitov2017coloring}
Y.~Shitov.
\newblock Coloring the power graph of a semigroup.
\newblock {\em Graphs and Combinatorics}, 33(2):485--487, 2017.

\bibitem{a.Shitov2018}
Y.~Shitov.
\newblock Distances on the commuting graph of the ring of real matrices.
\newblock {\em Mat. Zametki}, 103(5):765--768, 2018.

\bibitem{b.West}
D.~B. West.
\newblock {\em Introduction to Graph Theory}.
\newblock Second edition, Prentice Hall, 1996.

\bibitem{equitablematrix}
L.~You, M.~Yang, W.~So, and W.~Xi.
\newblock On the spectrum of an equitable quotient matrix and its application.
\newblock {\em Linear Algebra Appl.}, 577:21--40, 2019.

\end{thebibliography}

\end{document}